\theoremstyle{plain}
\newtheorem{thrm}{Theorem}[section]
\newtheorem{lemma}[thrm]{Lemma}
\newtheorem{prop}[thrm]{Proposition}
\newtheorem{cor}[thrm]{Corollary}
\newtheorem{rmrk}[thrm]{Remark}
\newtheorem{dfn}[thrm]{Definition}
\begin{document}

\newcommand{\SL}{\mathcal L^{1,p}( D)}
\newcommand{\Lp}{L^p( Dega)}
\newcommand{\CO}{C^\infty_0( \Omega)}
\newcommand{\Rn}{\mathbb R^n}
\newcommand{\Rm}{\mathbb R^m}
\newcommand{\R}{\mathbb R}
\newcommand{\Om}{\Omega}
\newcommand{\Hn}{\mathbb H^n}
\newcommand{\aB}{\alpha B}
\newcommand{\eps}{\ve}
\newcommand{\BVX}{BV_X(\Omega)}
\newcommand{\p}{\partial}
\newcommand{\IO}{\int_\Omega}
\newcommand{\bG}{\boldsymbol{G}}
\newcommand{\bg}{\mathfrak g}
\newcommand{\bz}{\mathfrak z}
\newcommand{\bv}{\mathfrak v}
\newcommand{\Bux}{\mbox{Box}}
\newcommand{\e}{\ve}
\newcommand{\X}{\mathcal X}
\newcommand{\Y}{\mathcal Y}
\newcommand{\W}{\mathcal W}

\numberwithin{equation}{section}

\newcommand{\RN} {\mathbb{R}^N}
\newcommand{\Sob}{S^{1,p}(\Omega)}
\newcommand{\Dxk}{\frac{\partial}{\partial x_k}}
\newcommand{\Co}{C^\infty_0(\Omega)}
\newcommand{\Je}{J_\ve}
\newcommand{\eh}{\ve h}
\newcommand{\Dxi}{\frac{\partial}{\partial x_{i}}}
\newcommand{\Dyi}{\frac{\partial}{\partial y_{i}}}
\newcommand{\Dt}{\frac{\partial}{\partial t}}
\newcommand{\aBa}{(\alpha+1)B}
\newcommand{\GF}{\psi^{1+\frac{1}{2\alpha}}}
\newcommand{\GS}{\psi^{\frac12}}
\newcommand{\HFF}{\frac{\psi}{\rho}}
\newcommand{\HSS}{\frac{\psi}{\rho}}
\newcommand{\HFS}{\rho\psi^{\frac12-\frac{1}{2\alpha}}}
\newcommand{\HSF}{\frac{\psi^{\frac32+\frac{1}{2\alpha}}}{\rho}}
\newcommand{\AF}{\rho}
\newcommand{\AR}{\rho{\psi}^{\frac{1}{2}+\frac{1}{2\alpha}}}
\newcommand{\PF}{\alpha\frac{\psi}{|x|}}
\newcommand{\PS}{\alpha\frac{\psi}{\rho}}
\newcommand{\ds}{\displaystyle}
\newcommand{\Zt}{{\mathcal Z}^{t}}
\newcommand{\XPSI}{2\alpha\psi \begin{pmatrix} \frac{x}{|x|^2}\\ 0 \end{pmatrix} - 2\alpha\frac{{\psi}^2}{\rho^2}\begin{pmatrix} x \\ (\alpha +1)|x|^{-\alpha}y \end{pmatrix}}
\newcommand{\Z}{ \begin{pmatrix} x \\ (\alpha + 1)|x|^{-\alpha}y \end{pmatrix} }
\newcommand{\ZZ}{ \begin{pmatrix} xx^{t} & (\alpha + 1)|x|^{-\alpha}x y^{t}\\
     (\alpha + 1)|x|^{-\alpha}x^{t} y &   (\alpha + 1)^2  |x|^{-2\alpha}yy^{t}\end{pmatrix}}
\newcommand{\norm}[1]{\lVert#1 \rVert}
\newcommand{\ve}{\varepsilon}

\title[Gradient bounds and monotonicity, etc.]{Gradient bounds and monotonicity of the energy for some nonlinear singular diffusion equations}

\author{Agnid Banerjee}
\address{Department of Mathematics\\Purdue University \\
West Lafayette, IN 47907} \email[Agnid Banerjee]{banerja@math.purdue.edu}
\thanks{First author supported in part by the second author's
NSF Grant DMS-1001317}

\author{Nicola Garofalo}
\address{Department of Mathematics\\Purdue University \\
West Lafayette, IN 47907} \email[Nicola
Garofalo]{garofalo@math.purdue.edu}
\thanks{Second author supported in part by NSF Grant DMS-1001317}

%
%
%
\keywords{}
\subjclass{}

\maketitle

\begin{center}
\ \textbf{Abstract}
\end{center}

\medskip

 We construct  viscosity solutions to the nonlinear evolution equation \eqref{p} below which generalizes  the motion of level sets  by mean curvature (the latter corresponds to the case $p = 1$) using the  regularization scheme as  in \cite{ES1} and \cite{SZ}. The pointwise properties of such solutions, namely the comparison principles, convergence of solutions as $p\to 1$, large-time  behavior and unweighted energy monotonicity are studied. We also prove a notable monotonicity formula for the weighted energy, thus generalizing Struwe's famous monotonicity formula for  the heat equation ($p =2$).

\medskip

\section{Introduction}

In $\Rn  \times [0,\infty)$, or in the cylinder $\Omega \times [0, \infty)$, where $\Om\subset \Rn$ is a bounded open set, we  study  the following equation 
\begin{equation}\label{me}
\text{div}(\Phi'(|Du|^2)Du)=\Phi'(|Du|^2)u_t. 
\end{equation}
Here, the function $\Phi$ is given by  
\begin{equation}\label{phi}  
\Phi(s)= \frac{2}{p} s^{\frac{p}{2}},    \quad \ \ \ \      p   \geq 1.
\end{equation}
When  $ p \to 1$, the  equation \eqref{me} becomes
\begin{equation}\label{mmc}
u_t   =  |Du|\ \text{div}\big( \frac{Du}{|Du|}\big),
\end{equation}
which  is the motion of level sets  of $u$ by mean   curvature. Most of our discussion will focus on the case $ p > 1$ of equation \eqref{me}, i.e., the equation
\begin{equation}\label{p}
|Du|^{p-2} u_t = \text{div}(|Du|^{p-2} Du). 
\end{equation}
It is worth noting that the  equation \eqref{p}  can also  be viewed as  a  generalization  of the heat  equation, which corresponds to the case $p =2$. The heat equation is also embedded in the parabolic $p$-Laplacian, 
\[
u_t  = \text{div}(|Du|^{p-2} Du),
\]
which has also been well studied, see \cite{D} and the references therein. Such equation, however, is quite different from \eqref{p} which, contrarily to the parabolic $p$-Laplacian, does not possess a divergence structure. The limiting case $p\to \infty$ of \eqref{p} is also extremely interesting
in connection with the analysis of tug-of-war games with noise in which the number of rounds is
bounded. The value functions for these games approximate a solution to the pde \eqref{p} above
when the parameter that controls the size of the possible steps goes to zero. For this, see the interesting paper \cite{MPR}.

The equation  \eqref{mmc} has  been  considered  by several authors, see for instance  \cite{ES1}-\cite{ES4}, \cite{SZ}, \cite{BG}, \cite{CGG},\cite{CW}, \cite{GGIS}, \cite{ISZ}. In \cite{ES1}-\cite{ES4} the case of   $\Rn  \times [0, \infty)$  is treated, whereas in \cite{SZ} the case of $ \Omega  \times  [0, \infty )$ is studied, with $ \Omega $ being a smooth domain  with   mean curvature bounded   from below  by a  positive constant at each point on the boundary. The existence of viscosity solutions is  proved  using  approximating   evolution equations. 
In   \cite{CGG},  equations of the  form  
\[
u_{t } +  F(Du, D^{2}u) = 0,
\]
have been considered. Unlike what was done in \cite{ES1} or \cite{SZ}, in the paper \cite{CGG} the authors proved the existence of solutions using Perron's method.
Most of the discussion in \cite{CGG}, including solvability of Cauchy problem, pertains what the authors call \emph{geometric} $F$. For the $F$ corresponding to the equation \eqref{p}, being  geometric  in the sense of \cite{CGG} is true for the case $p=1$,  but not for $p>1$. 

It should be noted that Proposition 2.2 in \cite{BG} establishes that a solution for the generalized mean curvature  flow  in the sense of \cite{ES1} is equivalent to being a solution in the sense of  \cite{CGG}. Finally, a Harnack type  approach to the evolution of surfaces  can be found in  \cite{CW}.

The present paper is organized as follows. In Section \ref{S:def} we introduce the relevant definitions of solution in the viscosity sense. In Section \ref{S:cp} we collect several comparison principles for viscosity solutions which generalize to the equation \eqref{p} those established in \cite{ES1} and \cite{CGG} when $p=1$. One notable aspect of the equation \eqref{p} is that it is invariant under the standard parabolic dilations $(x,t) \to (\lambda x,\lambda^2 t)$. Exploiting this invariance in Section \ref{S:es} we have found a notable explicit solution $G_p$ of \eqref{p}, see Proposition \ref{P:es}. By means of a variant of this solution, we have been able to establish a comparison principle for solutions of \eqref{p} which resembles the classical result of Tychonoff for the heat equation, see Theorem \ref{T:gcp}. 

In Section \ref{S:existence}, following \cite{ES1} and \cite{SZ}, we show the existence of solutions $u$ to the Cauchy and Cauchy-Dirichlet problems as limits of solutions $u^{\ve}$ of the regularized problems \eqref{e:1aprox}. We have preferred this regularization scheme since, on the one hand, it enables us to answer in the affirmative that, unlike what happens for the generalized mean curvature flow equation \eqref{mmc}, in the case $p>1$ viscosity solutions of \eqref{p} do not have finite extinction time, or finite propagation speed. On the other hand, it facilitates in the subsequent sections our study of various pointwise properties, large time behavior, monotonicity results, etc. Continuing our discussion of the plan of the paper, in Section \ref{S:conv} we show that as $p \to 1$, the corresponding solutions to \eqref{p} converge locally uniformly to the unique solution of generalized mean curvature flow \eqref{mmc}. In Section \ref{S:ltb} we study the large-time behavior  of the solutions for the Cauchy-Dirichlet problem. We first note that the $p$-energy is  non-increasing as a function of time, thus generalizing a result in \cite{SZ} for the case $p=1$. We  then  identify the  double limit  $\lim_{\ve \to 0} \lim_{t \to \infty}u^{\ve}(x,t)$ as a  solution of the  $p$-Laplace  equation subject to prescribed  boundary conditions.  In the case $ p=1$, such limit solution corresponds to the  function of least  gradient as in \cite{SZ}. Moreover,  for $1 < p \leq 2$, we show that  
\[
\underset{\ve \to 0 }{\lim}\ \underset{t \to \infty}{\lim}\ u^{\ve}(x,t) = \underset{t \to \infty}{\lim}\ \underset{\ve \to 0 }{\lim}\ u^\ve(x,t).
\]
For the case $p=1$, it was shown in \cite{SZ} that the limits in $t$ and $\ve$ do not commute, in general.

In  Section \ref{S:eemon} we first establish the monotonicity of the energy of the unique bounded viscosity solutions to the Cauchy problem constructed in the existence theorems of Section \ref{S:existence}, see Theorem \ref{T:1mon} below. It is interesting to note that the proof of such result relies crucially on the decay of solutions which is obtained by comparison with the explicit solution $G_p$ constructed in Proposition \ref{P:es} in Section \ref{S:es}. This implies, in particular, energy estimates in terms of initial datum. In the second part of Section \ref{S:eemon} we prove that, quite notably, viscosity solutions of the nonlinear singular equation \eqref{p} 
satisfy a monotonicity theorem similar to Struwe's result monotonicity theorem for the heat equation in \cite{S}, see Theorem \ref{T:struwe} below. 
 
In closing, we mention two works in which the non-geometric case ($p>1$) of the equation \eqref{me} has been studied. The existence of solution to the Cauchy problem corresponding to \eqref{me} can also be obtained by an adaptation of Perron's method, as it was done in the interesting work \cite{OS}, where a whole class of non-geometric equations was studied. Finally, the equation \eqref{p} with $p>1$ has also been studied in the interesting recent paper \cite{MPR}, where a solution to the Cauchy-Dirichlet problem is obtained by using probabilistic methods as the limit of the value functions of tug-of-war games.

\section{Preliminaries}\label{S:def}

We can formally  rewrite \eqref{me} as a non-divergence form equation as follows
\begin{equation}
\Phi'(|Du|^2)\Delta u + 2\Phi''(|Du|^2)u_{ij}u_iu_j  = \Phi'(|Du|^2)u_t,
\end{equation}
where we have let $u_i = \frac{\p u}{\p x_i}$, $u_{ij} = \frac{\p^2 u}{\p x_i \p x_j}$, etc.
After  substituting $\Phi$  as in \eqref{phi}, we obtain  
\begin{equation}\label{me2}
|Du|^{p-2}u_{t}=  |Du|^{p-4} (|Du|^{2}\delta_{ij}  + (p-2) u_iu_j)u_{ij}.
\end{equation}
We now formally proceed to cancel off the powers of  $|Du|$  from both sides of \eqref{me2},
to find 
\begin{equation}\label{me3}
u_{t} = \left(\delta_{ij} +  (p-2)\frac{u_iu_j}{|Du|^{2}}\right)u_{ij}.
\end{equation}
Motivated   by  the  above formal  calculations, following Definitions 2.1-2.3 in \cite{ES1}, we now introduce the relevant notion of solutions to \eqref{me}. Hereafter, whenever convenient we will write $z = (x,t), z_0 = (x_0,t_0)$, etc., for points in $\R^{n+1}$.
Throughout this paper, $\Om$ indicates an open set in $\Rn$ which can of course be the whole of $\Rn$, whereas $T$ indicates an extended number satisfying $0<T\le \infty$.  
 
\begin{dfn}\label{D:vsub}
A   function  $u\in C(\Om \times [0 , T))\cap L^{\infty}(\Om \times  [0, T))$ is called a \emph{viscosity subsolution} of \eqref{me}, with  $\Phi$ as  in \eqref{phi}, provided that if
\begin{equation}
u - \phi\quad  \text{has a local maximum at}\quad   z_0 \in \Om \times  (0, T)
\end{equation}
for each $\phi \in  C^{2}(\Om \times (0, T))$, then 
\begin{equation}
\begin{cases}
\phi_t \leq \left(\delta_{ij} + (p-2)  \frac{ \phi_{i}\phi_{j}}{|D\phi|^{2}}\right)\phi_{ij}\quad \text{at}\quad z_0,
\\
\text{if}\   D\phi(z_0) \not= 0,
\end{cases}
\end{equation}
and
\begin{equation}
\begin{cases}
\phi_t \leq \left(\delta_{ij} + (p-2)  a_{i}a_{j}\right)\phi_{ij}\quad \text{at} \quad z_0,\  \text{for some}\quad  a \in  \Rn\quad  \text{with}\quad  |a| \leq 1,
\\
\text{if}\   D\phi(z_0) = 0.
\end{cases}
\end{equation}
\end{dfn}

\begin{dfn}\label{D:vsup}
A   function  $u\in C(\Om \times [0 , T))\cap L^{\infty}(\Om \times  [0, T))$ is called a \emph{viscosity supersolution} of \eqref{me}, with  $\Phi$ as  in \eqref{phi}, provided that if
\begin{equation}
u - \phi\quad  \text{has a local minimum at}\quad   z_0
\end{equation}
for each $\phi \in  C^{2}(\Om \times (0, T))$, then 
\begin{equation}
\begin{cases}
\phi_t \ge \left(\delta_{ij} + (p-2)  \frac{ \phi_{i}\phi_{j}}{|D\phi|^{2}}\right)\phi_{ij}\ \text{at} \quad z_0,
\\
\text{if}\   D\phi(z_0) \not= 0,
\end{cases}
\end{equation}
and
\begin{equation}
\begin{cases}
\phi_t \ge \left(\delta_{ij} + (p-2)  a_{i}a_{j}\right)\phi_{ij}\  \text{at}\ z_0,\ \text{for some}\quad  a \in  \Rn\quad  \text{with}\quad  |a| \leq 1,
\\
\text{if}\   D\phi(z_0) = 0.
\end{cases}
\end{equation}
\end{dfn}

\begin{dfn}\label{D:vsol} A  function  $u \in C(\Om \times [0 , T))  \cap L^{\infty}(\Om\times  [0, T))$  is called a \emph{viscosity solution} of \eqref{me}  provided  it is  both a  viscosity subsolution and supersolution.
\end{dfn}

\begin{rmrk}\label{R:T}
When $T<\infty$, a viscosity   sub- or supersolution of \eqref{me} in  $\Om \times [0,T]$ is to be understood as  one in $\Om \times  [0, T + \ve) $  for some $\ve  > 0$.
\end{rmrk}

\begin{rmrk}\label{R:scales}
In a standard fashion one can verify that, if $u$ is a viscosity solution of \eqref{me}, then such is also $ku + c$, for any $k,c \in \R$. This simple, yet important property, will be repeatedly used in the present paper.
\end{rmrk} 

Following \cite{ES1}, it will be convenient to have the following equivalent definitions. 
  
\begin{dfn}[Equivalent definition]\label{D:ed}  A function $u\in C(\Om \times [0 , T)\cap L^{\infty}  (\Om \times  [0, T))$  is called  a  \emph{viscosity subsolution}   of  \eqref{me} if  whenever $z_0   \in \Om \times (0, T)$, and for some $q\in \R, \sigma\in \Rn$ and symmetric $(n+1)\times(n+1)$ matrix $R$, we have as $z\to z_0$,
\[
u(z) \leq  u(z_0)  + q(t- t_{0}) + <\sigma,x- x_{0}>  + \frac{1}{ 2} <R( z- z_{0}),z - z_{0}>  + o(|z - z_{0}|^{2}),  
\]
then 
\[
\begin{cases}
q \leq  (\delta_{ij} + ( p - 2)\frac{\sigma_{i} \sigma_{j}}{| \sigma|^{2}}) R_{ij}, \ \text{if}\  \ \ \sigma  \neq 0,  
\\
q  \leq  (\delta_{ij} + (p-2)a_{i}a_{j})R_{ij}\quad   \text{for some}\quad  a \in  \Rn, \text{with}\quad  |a| \leq 1,\ \ \ \text{if}\ \sigma = 0.
\end{cases}
\]
A \emph{viscosity supersolution} is defined similarly. Finally, $u$ is a \emph{viscosity solution} if it is at one time a viscosity sub- and supersolution.
\end{dfn}

Definitions \ref{D:vsub}, \ref{D:vsup}, \ref{D:vsol} are each easily seen to be equivalent to the corresponding case in Definition \ref{D:ed}. For this aspect we refer the reader to the seminal papers \cite{J} and \cite{I}. From Definition \ref{D:ed} one sees that a  smooth enough  viscosity solution is also a classical solution on the set where its spatial gradient does not vanish. Moreover, by adapting the argument which in Proposition 2.2 in \cite{BG} is given in the case $ p =1 $, we can conclude that, for an equation such as \eqref{me}, the notion of solution in the sense of Definition \ref{D:vsol} is equivalent to being a solution in the sense of Definition 2.1 on p. 753 in \cite{CGG}. This is the content of the  following proposition. But first, we recall the relevant definition from \cite{CGG}, adapted to the equation \eqref{me}.

\begin{dfn}\label{D:cgg}
A   function  $u\in C(\Om \times [0, T))\cap L^{\infty}(\Om \times  [0, T))$, with  $\Phi$ as  in \eqref{phi}, is called a \emph{viscosity subsolution} of \eqref{me} in the sense of \cite{CGG},  provided that if
\begin{equation}
u - \phi\quad  \text{has a local maximum at}\quad   z_0 \in \Om \times  (0, T)
\end{equation}
for every $\phi \in  C^{2}(\Om \times (0, T))$, then either
\begin{equation}
\begin{cases}
\phi_t \leq \left(\delta_{ij} + (p-2)  \frac{ \phi_{i}\phi_{j}}{|D\phi|^{2}}\right)\phi_{ij}\quad\ \ \text{at}\quad z_0,
\\
\text{if}\   D\phi(z_0) \not= 0,
\end{cases}
\end{equation}
or
\begin{equation}
\begin{cases}
\underset{|a| = 1}{\inf}\ \left\{\phi_t - (\delta_{ij}\quad  +  (p-2)  a_{i}a_{j})\phi_{ij}\right\}  \leq 0\ \ \ \text{at}\ z_0,
\\
\text{if}\   D\phi(z_0) = 0.
\end{cases}
\end{equation}
Analogous definitions for supersolution, or solution, in the sense of \cite{CGG}.
\end{dfn}

\begin{prop}\label{P:equivalence}
A bounded continuous function is a  solution in the sense of Definition \ref{D:vsol} iff it is a solution in  the sense of Definition \ref{D:cgg}.
\end{prop}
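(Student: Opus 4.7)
The plan is to prove the two implications separately. Writing a viscosity solution as a simultaneous viscosity sub- and supersolution, and observing that the supersolution case is obtained from the subsolution case by the substitution $u \leftrightarrow -u$ (together with the exchange $\inf \leftrightarrow \sup$), it suffices to verify the equivalence of the two subsolution notions.

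The implication Def \ref{D:cgg} $\Longrightarrow$ Def \ref{D:vsol} is immediate. At any point $z_0 \in \Om \times (0,T)$ where $u - \phi$ attains a local maximum and $D\phi(z_0) \neq 0$, the pointwise inequalities prescribed by the two definitions are identical. If instead $D\phi(z_0) = 0$, then Def \ref{D:cgg} states that $\inf_{|a|=1}\{\phi_t - (\delta_{ij} + (p-2) a_i a_j)\phi_{ij}\} \leq 0$ at $z_0$; by continuity of the bracketed quantity in $a$ and compactness of the unit sphere $S^{n-1}$, this infimum is attained at some $a^* \in S^{n-1}$. Since $|a^*| = 1 \leq 1$, the vector $a^*$ furnishes the witness required by Def \ref{D:vsol}.

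The reverse implication, Def \ref{D:vsol} $\Longrightarrow$ Def \ref{D:cgg}, is the substantive one. The case $D\phi(z_0) \neq 0$ is again trivial, so assume $D\phi(z_0) = 0$ and set $M := D^2\phi(z_0)$. The task is to show
\[
\phi_t(z_0) \;\leq\; \text{tr}(M) \,+\, (p-2)\sup_{|b|=1}\langle Mb, b\rangle.
\]
The strategy is to regularize $\phi$ so that its gradient at a nearby maximizer points in a prescribed direction $b$, apply the non-degenerate case of Def \ref{D:vsol}, and pass to the limit. After replacing $\phi$ by $\phi + |z - z_0|^4$ (which preserves $D\phi(z_0)=0$ and $D^2\phi(z_0) = M$ while making $z_0$ a \emph{strict} local maximum of $u - \phi$), fix $b \in S^{n-1}$ and set
\[
\phi^\delta(x,t) := \phi(x,t) \,-\, \delta\langle b, x - x_0\rangle, \qquad \delta > 0.
\]
By uniform convergence $\phi^\delta \to \phi$ together with strict maximality, the function $u - \phi^\delta$ attains its maximum in a fixed small compact neighborhood of $z_0$ at points $z_\delta \to z_0$. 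Applying Def \ref{D:vsol} to $\phi^\delta$ at $z_\delta$, splitting according to whether $D\phi^\delta(z_\delta) = D\phi(z_\delta) - \delta b$ vanishes or not, and then taking $\delta \to 0$ along a subsequence in which the normalized gradient converges to $\pm b$, yields in the limit
\[
\phi_t(z_0) \;\leq\; \text{tr}(M) \,+\, (p-2)\langle Mb, b\rangle.
\]
Supremizing over $b \in S^{n-1}$ gives the Def \ref{D:cgg} condition.

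The principal obstacle is the rate control needed to identify the limiting direction of the normalized gradient $\nu^\delta := (D\phi(z_\delta) - \delta b)/|D\phi(z_\delta) - \delta b|$. Since $D\phi(z_\delta) \approx M(x_\delta - x_0)$ near $z_0$, forcing $\nu^\delta \to -b$ requires $|x_\delta - x_0|$ to decay faster than $\delta$ as $\delta \to 0$. Engineering this rate estimate via a finer higher-order augmentation of $\phi$ (or equivalently via a doubling-of-variables device), and then verifying that the degenerate subcase of Def \ref{D:vsol} also passes to the limit cleanly, is the technical heart of the argument. It is precisely the adaptation of Proposition 2.2 of \cite{BG} from $p = 1$ to general $p > 1$, obtained by replacing the matrix $\delta_{ij} - a_i a_j$ by $\delta_{ij} + (p-2)a_i a_j$ throughout.
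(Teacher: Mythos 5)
Your outer structure is fine: reducing to the subsolution case, and the trivial direction (Definition~\ref{D:cgg} $\Rightarrow$ Definition~\ref{D:vsub}) is correctly handled by the compactness argument that upgrades the $\inf$ over $|a|=1$ to a concrete witness. The problem is the reverse implication.

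You aim to show that
\[
\phi_t(z_0) \le \operatorname{tr}(M) + (p-2)\langle Mb, b\rangle \qquad \text{for \emph{every} } b\in S^{n-1},
\]
which is strictly more than Definition~\ref{D:cgg} requires; the CGG condition only asks for the existence of \emph{one} $a$ with $|a|=1$ achieving the inequality. Trying to prescribe the limiting direction of the normalized gradient is precisely what makes your argument fail: as you yourself point out, $D\phi(z_\delta) \approx M(x_\delta - x_0)$, so obtaining $\nu^\delta \to \pm b$ requires the rate estimate $|x_\delta - x_0| = o(\delta)$, and you supply no mechanism to produce it. Merely stating that a ``finer higher-order augmentation'' or a ``doubling-of-variables device'' would handle it is not a proof; and in fact the doubling-of-variables argument does \emph{not} control the limiting direction to be an arbitrary prescribed $b$. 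In your degenerate subcase ($D\phi^\delta(z_\delta)=0$), the difficulty compounds: there Definition~\ref{D:vsub} only hands you \emph{some} $a_\delta$ with $|a_\delta|\le 1$, whose subsequential limit $a_*$ bears no relation to $b$, so the claimed inequality with $Mb$ simply does not follow from that branch. The ``for all $b$'' claim thus remains unproved (and it is not clear it is even true in general).

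The paper's proof takes a different and less ambitious tack. It doubles variables with $\xi_\ve(x,y,t) = u(x,t) - |x-y|^4/\ve - \phi(y,t)$, locates the maximizer $(x_\ve,y_\ve,t_\ve)\to(x_0,x_0,t_0)$, and reads off the extremality conditions at $(y_\ve,t_\ve)$. If $D\phi(y_\ve,t_\ve)=0$ along a subsequence, one gets $x_\ve=y_\ve$, hence $\phi_t(z_0)\le 0$ and $D^2\phi(z_0)\ge 0$ in the limit; a short computation then shows that $a=e_1$ works for every $p>1$. If $D\phi(y_\ve,t_\ve)\ne 0$, the translated test function $u(x,t) - |x_\ve-y_\ve|^4/\ve - \phi(x-(x_\ve-y_\ve),t)$ has a local max at $(x_\ve,t_\ve)$ with nonvanishing spatial gradient; applying Definition~\ref{D:vsub} there and passing to the limit along $a^\ve = D\phi/|D\phi|\to a$, $|a|=1$, yields the desired inequality for \emph{that} particular $a$. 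In both cases the argument produces \emph{one} good unit vector; it never attempts the direction control your proposal hinges on. You should abandon the ``for all $b$'' strategy, and instead establish one well-chosen $a$ per case, following the structure above.
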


\begin{proof} 
We only look at the case of subsolution since the other  case is dealt similarly. The proof that
Definition \ref{D:cgg} $\Longrightarrow$ Definition \ref{D:vsub} is trivial, and we leave it to the reader.
We thus focus on the implication Definition \ref{D:vsub} $\Longrightarrow$ Definition \ref{D:cgg}. Suppose that for every $\phi \in  C^{2}(\Om \times (0, T))$, $u - \phi$ has a local maximum at $z_0 \in \Om \times  (0, T)$. In the case when $D\phi( z_{0}) \neq 0 $, the corresponding conditions in Definition \ref{D:vsub} and Definition \ref{D:cgg} are  seen to be the same. So we look at the case  when $D\phi (z_{0})= 0$. Without loss of generality, by replacing $\phi$ with  $\phi(x, t) + |x-x_{0}|^{4} + ( t - t_{0})^{4}$, which does not affect the spatial and time derivatives at  $z_{0}$, we can assume a strict local maximum  at $z_0$ (say, in $C_{r_0}(z_0) = \overline B_{r_0}(x_{0})\times[t_0-r_0^2,t_0+r_0^2]$). 
For $\ve>0$ we define  
\[
\xi_{\ve}(x,y,t)  =   u( x, t)  -  \frac{|x-y|^4}{\ve}  - \phi(y,t).
\]
Let $(x_{\ve},y_{\ve},t_{\ve})$ be the maximum of $\xi_{\ve}$ in the set $\tilde C_{r_0}(z_0) = \overline B_{r_0}(x_{0})\times  \overline B_{r_0}(x_{0}) \times[t_0-r_0^2,t_0+r_0^2]$. We claim that, because of the strict maximum assumption of  $u - \phi $ at  $(x_{0}, t_{0})$, the sequence $(x_{\ve},y_{\ve},t_{\ve})$ must converge to $(x_{0}, x_{0}, t_{0})$  as $\ve \to 0$.( hence for all small enough $\ve$, the maximum of $\xi_{\ve}$ is attained at an interior point of  $C_{r_0}(z_0)$)  To see this, suppose on the contrary  $(x_{\ve}, y_{\ve}, t_{\ve})$ stays away from $ (x_0, x_0, t_0)$  and
\begin{equation}\label{e:8}
\xi_{\ve}(x_{\ve}, y_{\ve}, t_{\ve}) > \xi_\ve( x_0, x_0, t_0) = u(x_0, t_0) - \phi(x_0, t_0). 
\end{equation}
Now since  $u , \phi$ are  bounded,  because of the  term $ - \frac{|x-y|^4}{\ve}$ in $\xi_{\ve}$ and the extremum condition at  $(x_{\ve}, y_{\ve}, t_{\ve})$, we  conclude that, after possibly passing to a  subsequence, we have  $(x_{\ve}, y_{\ve}, t_{\ve} ) \to (x_1, x_1, t_1) \neq (x_0, x_0, t_0)$, because of our assumption. So, from  \eqref{e:8}, we obtain 
\begin{equation}
u(x_1,t_1) -  \phi ( x_1, t_1) \geq u(x_0, t_0) - \phi(x_0, t_0),
\end{equation}
which contradicts the strict  maximum condition at $(x_0, t_0)$.

Now, if we consider the function $(y,t)\to \xi_{\ve}(x_{\ve}, y_{\ve}, t_{\ve})$, we easily see that at the point $(y_{\ve},t_{\ve})$ we have   
\begin{equation}\label{e:e}
\begin{cases}
D \phi ( y_{\ve} , t_{\ve})  =  4  \frac{ | x_{\ve} - y_{\ve}|^{2}(x_{\ve} - y_{\ve})}{ \ve},
\\
D^{2} \phi ( y_{\ve}, t_{\ve}) \geq - 4 \frac{ | x_{\ve} - y_{\ve}|^{2}}{ \ve}   -  8 \frac{( x_{\ve} - y_{\ve})^{T}( x_{\ve} -  y_{\ve})}{\ve}.
\end{cases}
\end{equation}
Two cases occur:
\begin{itemize}
\item[1)] $D\phi ( y_{\ve}, t_{\ve}) = 0$ for all $\ve$ small enough.
\item[2)] $D\phi ( y_{\ve}, t_{\ve}) \neq  0$ for a subsequence  $\ve \to 0$.
\end{itemize}
In case 1), from the first equation in \eqref{e:e} we have $ y_{\ve} = x_{\ve}$. We thus fix  $y = y_{\ve}$, and  arguing in the $x$ variable with the test function $ \frac { | x - y_{\ve}|^{4}} { \ve} + \phi (y_{\ve}, t)$, we  obtain from Definition \ref{D:vsub},
$ \phi _{t} ( y_{\ve}, t_{\ve}) \leq 0 $ (extrema condition at $(x_{\ve}=y_{\ve}, t_{\ve}))$  and so in the limit $\phi_{t}(x_{0}, t_{0}) \leq 0$ as  $ (y_{\ve}, t_{\ve}) \to (x_{0}, t_{0})$. Also  from \eqref{e:e}, we obtain  $ D^{2} \phi ( y_{\ve}, t_{\ve}) \geq 0 $ as $ y_{\ve} = x_{\ve}$,  and so likewise   $D^{2}\phi (x_{0}, t_{0}) \geq 0$. 

\medskip

Now from this information it is easily seen that for all $p>1$ we obtain at $(x_0,t_0)$  
\begin{equation}\label{e:f}
\Delta \phi   \geq  ( 2- p )\phi_{11}= (2-p) \phi_{ij}a_{i}a_{j}   \quad\ \ \text{with}\    a = e_{1}. 
\end{equation}
To see that \eqref{e:f} holds for $1 \leq p \leq 2$ we have $(2-p) \phi_{11} \leq \phi_{11} \leq \Delta \phi$. For $ p  > 2$ the right-hand side in \eqref{e:f}  is nonpositive because of nonnegativity of $D^{2} \phi$. 
So for all $p>1$ and $a=e_1$, we obtain at $(x_{0}, t_{0})$, 
$ \phi _{t}  - (\Delta \phi + ( p-2)\phi_{ij} a_{i}a_{j}) \leq  0$, which implies  Definition \ref{D:cgg}.

\medskip

 In case 2), then    
 \[
( x, t)  \to  v_{\ve}(x, t) =  u( x, t)  - | x_{\ve} - y_{\ve}|^{4} / \ve   -  \phi ( x - ( x_{\ve} - y_{\ve}), t)
\]
has  local maximum  at $( x_{\ve}, t_{\ve})$. To see this, given  any point  $(x_{\ve} + a, t_{\ve} + b) $ in the neighborhood of $(x_{\ve},t_{\ve})$  which  lies in $ C_{r_0}(z_0)$, we have
\begin{align*}
v_{\ve}(x_{\ve}+ a, t_{\ve} + b ) & =  u( x_{\ve} + a , t_{\ve} + b) - \frac{|(x_{\ve} + a)  - (y_{\ve} + a )|^4}{\ve} - \phi (y_{\ve} + a , t_{\ve} + b)
\\
& =  \xi_{\ve}( x_{\ve} + a , y_{\ve} + a , t_{\ve} + b).
\end{align*}
(adding and subtracting  $a$ in the  term $\frac{|x_{\ve}-y_{\ve}|^4}{\ve}$)
So
\begin{align*}
v_{\ve}(x_{\ve}+ a, t_{\ve} + b) & = \xi_{\ve}( x_{\ve} + a, y_{\ve} + a, t_{\ve} + b)  
\\
& \leq  \xi_{\ve} ( x_{\ve}, y_{\ve}, t_{\ve}) = u(x_{\ve},t_{\ve}) - \frac{|x_{\ve} - y_{\ve}|^{4}}{\ve} - \phi(y_{\ve}, t_{\ve})
\\
& = v_{\ve}(x_{\ve}, t_{\ve}),
\end{align*}
which justifies the local maximum of  $v_{\ve}$ at $ (x_{\ve}, t_{\ve})$.
(since maximum  of $\xi_{\ve}$ is attained at $(x_{\ve}, y_{\ve}, t_{\ve})$  in $\tilde C_{r_0}(z_0)$)
Thus, by applying Definition \ref{D:vsub} to the  test function $|x_{\ve} - y_{\ve}|^{4} / \ve  $ $+  \phi (x - (x_{\ve} - y_{\ve}),t)$, by treating it as a function of the variable $(x,t)$,  we obtain
\[
\phi_{t}  - (\delta_{ij} +  (p-2) \frac {\phi_{i}\phi_{j}}{| D\phi|^{2}}) \phi_{ij}  \leq 0,
\]
at  $(x_{\ve} - ( x_{\ve}- y_{\ve}), t_{\ve}) = (y_{\ve}, t_{\ve})$.
Let $ a^{\ve}_{i} = \phi_{i}/|D\phi| \quad at  \quad (y_{\ve}, t_{\ve})$. After  passing to a subsequence, we may assume that  $ a^{\ve}  \to  a $ with $ |a| = 1$.Therefore,  by letting ${\ve} \to 0 $,  we  obtain
\[
\phi _{t}  - (\Delta \phi    + ( p-2)\phi_{ij} a_{i}a_{j}) \leq  0
\]
at $  (x_{0}, t_{0})$, which again implies  Definition \ref{D:cgg}.

\end{proof}

The fact that equation \eqref{me} is  truly an evolution equation associated with the $p$-Laplacian is  seen from the following lemma.

\begin{lemma}\label{l:pevol}
For $p>1$ a time-independent continuous function $u(x)$ is a $p$-harmonic function if and only $U(x,t) = u(x)$ is a viscosity solution of \eqref{me}.
\end{lemma}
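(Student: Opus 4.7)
The plan is to observe that for $U(x,t)=u(x)$ the right-hand side of \eqref{me3} is purely spatial and the left-hand side vanishes, so the equation reduces (at noncritical points) to the stationary normalized $p$-Laplace equation
$$0 = \left(\delta_{ij} + (p-2)\frac{u_i u_j}{|Du|^2}\right) u_{ij}.$$
The lemma is then an unpacking of definitions: one must check that a viscosity solution of this stationary equation, with the ``for some $|a|\le 1$'' relaxation at critical points of the test function that is built into Definitions \ref{D:vsub} and \ref{D:vsup}, coincides for $p>1$ with a $p$-harmonic function, and that the time slot in the test functions can be harmlessly collapsed when the candidate solution is time-independent.

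For the forward direction, I would take $\phi\in C^2(\Omega\times(0,T))$ with $U-\phi$ attaining a local maximum at $z_0=(x_0,t_0)$. Freezing $x=x_0$ makes $t\mapsto u(x_0)-\phi(x_0,t)$ have a local max at $t_0$, which forces $\phi_t(z_0)=0$. Freezing instead $t=t_0$ and setting $\psi(x):=\phi(x,t_0)$, the function $u-\psi$ has a local max at $x_0$; applying the $p$-harmonic subsolution property to $\psi$ yields the spatial inequality $(\delta_{ij}+(p-2)\psi_i\psi_j/|D\psi|^2)\psi_{ij}\ge 0$ in the case $D\psi(x_0)\neq 0$, and the relaxation with some $|a|\le 1$ in the case $D\psi(x_0)=0$. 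Together with $\phi_t(z_0)=0$, this is exactly the condition of Definition \ref{D:vsub} for \eqref{me} at $z_0$. The supersolution half is symmetric.

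For the backward direction, given $\psi\in C^2(\Omega)$ with $u-\psi$ having a local max at $x_0$, the natural device is the time-independent lift $\phi(x,t):=\psi(x)$. Then $U-\phi=u-\psi$ has a local max at $(x_0,t_0)$ for every $t_0\in(0,T)$, and since $\phi_t\equiv 0$, Definition \ref{D:vsub} applied to $U$ at $(x_0,t_0)$ immediately gives the normalized $p$-Laplace subsolution inequality at $x_0$; multiplying by $|D\psi(x_0)|^{p-2}$ at noncritical points recovers the usual viscosity $p$-Laplace subsolution condition, while at critical points the ``for some $|a|\le 1$'' clause is precisely the relaxation used in the standard viscosity definition of $p$-harmonicity.

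The main subtlety is the critical-point case $D\psi(x_0)=0$: one must verify that the relaxed condition in Definitions \ref{D:vsub}, \ref{D:vsup} matches the corresponding relaxation in the viscosity-sense definition of $p$-harmonicity. For $1<p<\infty$ this is the content of the Juutinen--Lindqvist--Manfredi equivalence between weak $p$-harmonic functions and viscosity solutions of $\Delta_p u=0$, which I would invoke to close the argument; everything else is a direct bookkeeping of the definitions.
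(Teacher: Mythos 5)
Your backward direction coincides with the paper's (lift $\psi$ to a time-independent test function and invoke \cite{JLM}), but your forward direction takes a genuinely different route and, as written, leaves a gap exactly at the critical-point case you flag as the ``main subtlety.'' The paper does not freeze time and quote a known viscosity property of $p$-harmonic functions. Instead, given a strict local maximum $z_0$ of $U-\phi$, it takes Lewis's smooth local approximations $u^\ve$ of $u$ (from \cite{L}), which solve the regularized nondivergence equation and converge to $u$ locally uniformly with first derivatives; it locates maximum points $z_\ve\to z_0$ of $u^\ve-\phi$, writes the classical inequality at $z_\ve$, and passes to the limit, extracting a convergent subsequence of $a_\ve = D\phi(z_\ve)/\big(|D\phi(z_\ve)|^2+\ve^2\big)^{1/2}$ to supply the vector $a$ with $|a|\le 1$ required by Definitions \ref{D:vsub}, \ref{D:vsup} when $D\phi(z_0)=0$. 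The approximation step is what produces the critical-point relaxation; it is not imported from an elliptic theorem.

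Your load-bearing step, by contrast, is to apply ``the $p$-harmonic subsolution property'' to the frozen slice $\psi(x)=\phi(x,t_0)$ and assert that it already delivers the ``for some $|a|\le 1$'' condition when $D\psi(x_0)=0$, citing \cite{JLM} to close this. That does not quite work: \cite{JLM} proves the equivalence of weak and viscosity solutions for the $p$-Laplacian in \emph{divergence form}, and its viscosity definition at critical points of the test function is not the ``$|a|\le 1$'' formulation used here (which follows the Evans--Spruck / Ishii--Souganidis convention for singular equations). When $D\phi(x_0)=0$, the factor $|D\phi|^{p-2}$ in $\Delta_p\phi$ vanishes for $p>2$, rendering the divergence-form test condition vacuous where the normalized one is a genuine constraint, and for $1<p<2$ the divergence-form operator is not pointwise defined at all. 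So the passage from the \cite{JLM} notion to the normalized ``$|a|\le 1$'' notion is an additional, nontrivial step not subsumed by that citation. To repair your forward direction you would either reproduce the approximation argument the paper uses, or invoke a result proving this precise equivalence for the normalized formulation of elliptic $p$-harmonicity.
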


\begin{proof}
Suppose $u$ be a $p$-harmonic function for some $p > 1$, i.e., a $W^{1,p}_{loc}$ weak solution of the $p$-Laplacian
\[
\text{div}(|Du|^{p-2} Du) = 0.
\]
We want to show that $U(x,t) = u(x)$ is a viscosity solution of \eqref{me}. Suppose that $U -  \phi$ have a strict local maximum at  $(x_{0}, t_{0})$. Now,  in a compact neighborhood $\omega$ of $x_{0}$ in space, and thus in a compact neighborhood $K$ of $(x_{0}, t_{0})$ in space-time, there are smooth functions $u^{\ve}$ such that $u^{\ve} \to u$ uniformly, along with their first derivatives, in that neighborhood. Furthermore, in $\omega$ the functions $u^{\ve}$ solve the equation 
\[
(\delta_{ij} + (p-2)\frac{ u^{\ve}_{i}u^{\ve}_{j}}{|Du^{\ve}|^{2} + \ve^{2}})u^{\ve}_{ij} = 0,
\]
see for instance \cite{L}. Let $z_\ve = (x_{\ve},t_{\ve})\in K$ be a point at which $u^{\ve}-\phi$ has its absolute maximum. We claim that  $z_{\ve} \to z_0$ and hence  for small enough $\ve $,  $z_{\ve}$ is an interior point of $K$.  This would imply 
\[
D u^\ve(z_\ve) =  Du^{\ve}(x_\ve) = D \phi(z_\ve),\ \ \ \phi_t(z_\ve) = u^\ve_t(z_\ve) = 0,\ \ \
 D^2 u^\ve(z_\ve) = D^2 u^\ve(x_\ve) \le D^2 \phi(z_\ve).
 \]
 This implies at $z_\ve$
 \begin{equation}\label{plapphi}
(\delta_{ij} + (p-2) \frac{\phi_{i}\phi_{j}}{|D\phi|^{2} + \ve^{2}})\phi_{ij} \ge 0 = \phi_{t}.
\end{equation}
Suppose on the contrary , there exists a subsequence of $\{z_\ve\}_{\ve>0}$, which we continue to denote by $z_\ve$, which converges by compactness to a point $z_1 \neq z_0 \in K$. In fact, since $u^\ve - \phi$ attains its absolute maximum at $z_\ve \in K$, we have
\[
u^\ve(z_\ve) - \phi(z_\ve) \ge u^\ve(z_0) - \phi(z_0).
\]
Because of uniform convergence, passing to the limit in the latter inequality, we would have
\[
u(z_1) - \phi(z_1) \ge u(z_0) - \phi(z_0).
\]
This would contradict the assumption that $u-\phi$ has a strict maximum at $z_0$.
In conclusion,  $z_\ve \to z_0$  as  $\ve \to 0$, and consequently, if $D\phi(z_0)\not= 0$,
we obtain at $z_0$, after passing to the limit in the inequality \eqref{plapphi}, 
\[
\phi_{t} \leq ( \delta_{ij} + ( p-2)\frac{ \phi_{i}\phi_{j}}{|D\phi|^{2}})\phi_{ij}.
\]
If instead $D\phi(z_0) = 0$, then consider the vectors $a_\ve = \frac{D\phi(z_\ve)}{ (|D\phi(z_\ve)|^{2} + \ve^{2})^{1/2}}$. Since $|a_\ve|\le 1$ by compactness there exists a subsequence, which we continue to indicate $a_\ve$, such that $a_\ve \to a$, with $|a|\le 1$. Letting $\ve\to 0$ in \eqref{plapphi} we obtain at $z_0$
\[
\phi_{t} \leq ( \delta_{ij} + ( p-2)a_{i}a_{j})\phi_{ij}.
\]
This shows that $u$ is a subsolution. A similar argument proves that $u$ is a supersolution.

Conversely, let $u$ be a viscosity solution of  \eqref{me}. The fact that $u$ is a $p$-harmonic function is a consequence of the equivalence of  definition of viscosity and weak solution for the $p$-Laplacian established in \cite{JLM}.

\end{proof}


\section{ Comparison principles}\label{S:cp}

In this section we collect some comparison principles for viscosity solutions of \eqref{me}, both on a bounded open set $\Om\subset \Rn$, and in the whole space. Assume that $u:\Rn\times (0,\infty)$ is a continuous and bounded function. Let us  denote by  $u^{\ve}$ and $u_{\ve}$  respectively  the sup and  inf convolution of $u$, see the definitions (3.1), (3.2) and Lemma 3.1 in \cite{ES1}. It is easily seen that the conclusions of Lemma 3.1 in \cite{ES1} continue to hold in our setting. By this we mean that, in addition to the analytic properties (i)-(vi) claimed in that lemma, if $u$ is a viscosity (sub-) supersolution of \eqref{me} in $\Rn\times (0,\infty)$, then ($u^{\ve}$) $ u_{\ve}$ is a (sub-) supersolution of \eqref{me} in  $\Rn \times [\sigma(\ve),\infty)$. Using this fact, and the above stated equivalent Definition \ref{D:ed} of solution, the proof of the following Theorem \ref{T:max1} follows. It is a slight modification  of the argument  in \cite{ES1}, and it has been described in \cite{SZ} which  remains unchanged  for other $p$'s. This in particular allow us to assert uniqueness of solutions to Cauchy-Dirichlet problem. However, the result will also be used now and then at other places.

\begin{thrm}\label{T:max1}
Let $\Omega\subset \Rn$ be a bounded domain. Assume u and v are two solutions of \eqref{me} in the cylinder $\Omega_\infty = \Omega \times [0, \infty)$, with  possibly  different  initial  and  boundary data. Then, the maximum of  $|u-v|$ is achieved  on the parabolic boundary $\p \Om_\infty = \p \Om\times (0,\infty) \cup \Om\times\{0\}$.
\end{thrm}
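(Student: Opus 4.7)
My plan is to follow the sup/inf-convolution comparison scheme of \cite{ES1}, as carried out in \cite{SZ}, with only cosmetic modifications for $p > 1$. By symmetry ($u \leftrightarrow v$) it suffices to show $\sup_{\Om \times [0,\infty)}(u-v) \le \sup_{\p \Om_\infty}(u-v)$. Fix $T > 0$ large and $\eta > 0$, and on $\bar \Om \times [0,T)$ consider the penalized difference $w(x,t) = u(x,t) - v(x,t) - \eta/(T-t)$. The blow-up of $\eta/(T-t)$ as $t \to T^-$ forces any maximum of $w$ on $\bar\Om \times [0,T)$ to be attained strictly before $t=T$; sending $\eta \to 0$ and $T \to \infty$ at the end, the theorem reduces to ruling out, under the contradiction hypothesis, the possibility that $\sup w$ exceeds its parabolic-boundary values by a strict margin and is attained at some interior point of $\Om \times (0,T)$.

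Suppose it does. Replace $u$ and $v$ by their sup- and inf-convolutions $u^\ve$, $v_\ve$; by the extension of \cite[Lemma 3.1]{ES1} noted immediately before the theorem, $u^\ve$ is a viscosity subsolution and $v_\ve$ a viscosity supersolution of \eqref{me} on $\Rn \times [\sigma(\ve),\infty)$, and both converge locally uniformly to $u$, $v$ as $\ve\to 0$. Crucially, $u^\ve$ is semiconvex and $v_\ve$ is semiconcave, so by Alexandrov's theorem both admit classical second-order Taylor expansions at almost every point in space-time, which is exactly the data required to invoke the equivalent Definition \ref{D:ed}. For $\ve$ small the interior maximum of $u^\ve - v_\ve - \eta/(T-t)$ persists; after a small additional quadratic perturbation to make it strict, and a standard application of Jensen's lemma, one obtains a point $z_* = (x_*,t_*) \in \Om \times (\sigma(\ve),T)$ at which $u^\ve$ and $v_\ve$ simultaneously admit classical second-order Taylor expansions, with spatial gradients forced to coincide, $\sigma := Du^\ve(z_*) = Dv_\ve(z_*)$; with Hessians satisfying $R^u := D^2 u^\ve(z_*) \le D^2 v_\ve(z_*) =: R^v$ (second-order optimality of the maximum); and with the positive time-derivative jump $q_u - q_v = \eta/(T-t_*)^2 > 0$.

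In the nondegenerate case $\sigma \neq 0$, Definition \ref{D:ed} applied to $u^\ve$ and $v_\ve$ yields
\begin{equation*}
q_u \le M_{ij}(\sigma) R^u_{ij}, \qquad q_v \ge M_{ij}(\sigma) R^v_{ij}, \qquad M_{ij}(\sigma) := \delta_{ij} + (p-2)\frac{\sigma_i \sigma_j}{|\sigma|^2}.
\end{equation*}
For every $p > 1$ the matrix $M(\sigma)$ is positive definite (eigenvalues $1$ with multiplicity $n-1$, and $p-1$), so subtracting and using $R^u - R^v \le 0$ gives
\begin{equation*}
0 < \frac{\eta}{(T-t_*)^2} = q_u - q_v \le M_{ij}(\sigma)\bigl(R^u - R^v\bigr)_{ij} \le 0,
\end{equation*}
a contradiction. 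Letting $\ve \to 0$, $\eta \to 0$, $T \to \infty$ then concludes. The main obstacle, and the only place where genuine care is required, is the degenerate case $\sigma = 0$: Definition \ref{D:ed} only furnishes vectors $a_u, a_v$ with $|a_u|, |a_v| \le 1$ satisfying the two inequalities, and these need not coincide. The resolution, exactly as in \cite{ES1} and \cite{SZ}, is to recast the degenerate conditions via the $\inf$/$\sup$ envelopes over unit vectors provided by the equivalent Definition \ref{D:cgg} (whose equivalence to Definition \ref{D:vsol} is established in Proposition \ref{P:equivalence}), and to observe that for every $p > 1$ the matrix $M(a) = \delta_{ij} + (p-2) a_i a_j$ is positive semidefinite for all $|a| \le 1$; thus $M(a)(R^u - R^v) \le 0$ for every admissible $a$, which again contradicts the strict positivity of the time-derivative jump.
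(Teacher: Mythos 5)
Your overall plan (sup-/inf-convolutions, Jensen's lemma, case split on whether the common gradient vanishes) is the right one and is indeed what ES1 and SZ do, but the crucial technical device is missing, and the substitute you offer for the degenerate case does not actually close the argument.

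\textbf{The degenerate case is not resolved by positive (semi)definiteness of $M(a)$ alone.} At a maximum point where $\sigma = Du^\ve(z_*) = Dv_\ve(z_*) = 0$, the definitions give you $q_u \le \mathrm{tr}\bigl(M(a_u)R^u\bigr)$ for \emph{some} $|a_u|\le 1$, and $q_v \ge \mathrm{tr}\bigl(M(a_v)R^v\bigr)$ for \emph{some} $|a_v|\le 1$, with $a_u$ and $a_v$ unrelated. Knowing $M(a)\ge 0$ for all $|a|\le 1$ and $R^u \le R^v$ only yields $\mathrm{tr}\bigl(M(a)(R^u - R^v)\bigr)\le 0$ for each fixed $a$; it does not control $\mathrm{tr}\bigl(M(a_u)R^u\bigr) - \mathrm{tr}\bigl(M(a_v)R^v\bigr)$. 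Concretely, in $\R^2$ with $p>4$, $R^u = \mathrm{diag}(1,-1) \le R^v = \mathrm{diag}(2,0)$, the subsolution constraint permits $q_u$ as large as $p-2$ (take $a_u = e_1$) while the supersolution constraint permits $q_v$ as small as $2$ (take $a_v = e_2$), so $q_u - q_v$ can be as large as $p-4>0$. Your intended contradiction with the strictly positive time-derivative jump therefore evaporates.

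\textbf{The missing device is the doubled-variable penalizer $|x-y|^4/\delta$.} One must maximize $u^\ve(x,t) - v_\ve(y,t) - |x-y|^4/\delta - \eta/(T-t)$ over $(x,y,t)$, not $u^\ve - v_\ve - \eta/(T-t)$ in a single spatial variable. This serves two purposes that your version loses. First, Jensen's lemma applied to the function of $(x,y,t)$ produces a point $(x_*,y_*,t_*)$ at which \emph{each of} $u^\ve(\cdot,t)$ and $v_\ve(\cdot,t)$ admits a classical second-order expansion, which is exactly what is needed to invoke Definition \ref{D:ed} for the subsolution and supersolution separately; working with $u^\ve-v_\ve$ in a single $x$ only gives second-order differentiability of the difference, and the linear Jensen perturbation $\langle p,x\rangle$ in fact forces $Du^\ve(z_*)-Dv_\ve(z_*)=-p\neq 0$, so the two gradients do not coincide. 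Second, and decisively for the degenerate case: at the doubled maximum the first-order conditions force $Du^\ve(x_*,t_*) = Dv_\ve(y_*,t_*) = 4|x_*-y_*|^2(x_*-y_*)/\delta$, so the degenerate situation occurs precisely when $x_*=y_*$; and because the Hessian (in $(x,y)$) of $|x-y|^4/\delta$ vanishes there, the second-order maximum conditions actually yield the much stronger pair $R^u = D^2 u^\ve(x_*,t_*)\le 0 \le D^2 v_\ve(y_*,t_*) = R^v$, not merely $R^u\le R^v$. With $R^u\le 0\le R^v$ and $M(a)\ge 0$ one does get $\mathrm{tr}\bigl(M(a_u)R^u\bigr)\le 0 \le \mathrm{tr}\bigl(M(a_v)R^v\bigr)$ for \emph{any} choices of $a_u,a_v$, and the contradiction with $q_u - q_v = \eta/(T-t_*)^2>0$ is restored. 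This is the step that ``remains unchanged for other $p$'s'' in the paper's citation to \cite{ES1} and \cite{SZ}, and it is the step your proposal omits.
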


Furthermore, since  our notion of sub- and supersolution is the same as  that of \cite{CGG}, see Proposition \ref{P:equivalence} above, we also have the usual parabolic  comparison  principle (see \cite{CGG}, Theorem 4.1 or \cite{GGIS}). 

\begin{thrm}\label{T:max2}
Let $\Omega\subset \Rn$ be a bounded domain, and let $u$ and $v$ respectively be sub- and supersolution of \eqref{me} in $\Omega \times [0, T]$. If $u \leq v$ on the parabolic boundary, then $ u \leq v$ on $\overline{\Omega} \times [0, T]$.
\end{thrm}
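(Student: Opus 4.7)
The plan is to reduce Theorem \ref{T:max2} to the comparison principle for degenerate parabolic equations with singular Hamiltonians established in \cite{CGG} (Theorem 4.1) and refined in \cite{GGIS}. The equation \eqref{me} can be written in the form $u_t + F(Du, D^2 u) = 0$, where
\[
F(q, R) = -\operatorname{tr}\!\left(\Big(\delta_{ij} + (p-2)\frac{q_i q_j}{|q|^2}\Big) R\right)\quad \text{for } q\neq 0,
\]
extended at $q=0$ by its upper- and lower-semicontinuous envelopes $F^*$ and $F_*$, which by direct computation satisfy $F^*(0,R) = -\inf_{|a|=1}\operatorname{tr}((I+(p-2)a\otimes a)R)$ and the analogous supremum formula for $F_*$. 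This is precisely the formulation that underlies Definition \ref{D:cgg}, so by Proposition \ref{P:equivalence} the subsolution $u$ and supersolution $v$ of \eqref{me} in the sense of Definitions \ref{D:vsub}--\ref{D:vsup} are also a CGG-subsolution and CGG-supersolution respectively of $u_t + F(Du, D^2u) = 0$.

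Once inside the CGG framework, I would verify the two structural hypotheses needed to invoke \cite{CGG}, Theorem 4.1 (or its counterpart in \cite{GGIS}). First, $F$ is degenerate elliptic: $F(q, R) \leq F(q, S)$ whenever $R \geq S$, which follows because the symmetric matrix $I + (p-2)\frac{q\otimes q}{|q|^2}$ has eigenvalues $1$ (with multiplicity $n-1$) and $p-1 > 0$, hence is positive definite. Second, $F^*$ and $F_*$ are continuous away from $\{q=0\}$, and at $q=0$ they are controlled by $|R|$, which is the key singular-gradient condition exploited in \cite{CGG, GGIS} to push through the doubling-of-variables argument. The equation is also independent of $(x,t)$ and of $u$ itself, so no monotonicity in $u$ is required, which considerably simplifies the structure.

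Given these facts, the standard doubling-of-variables comparison argument applies verbatim: one considers the auxiliary function
\[
\Psi_{\alpha,\beta}(x,y,t) = u(x,t) - v(y,t) - \frac{\alpha}{2}|x-y|^2 - \frac{\beta}{T-t},
\]
on $\overline{\Omega}\times\overline{\Omega}\times[0,T)$, assumes for contradiction that $\sup(u-v) > 0$ on the interior, locates an interior maximum $(x_\alpha, y_\alpha, t_\alpha)$, applies the parabolic theorem of sums of \cite{CIL} to produce matrices $X, Y$ with $X\leq Y$ at the test points, and derives a contradiction from the CGG viscosity inequalities combined with degenerate ellipticity, then sends $\beta\to 0$ and $\alpha\to \infty$. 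Since $F$ does not depend on $u$ or on $(x,t)$, the only subtle point is the case where the penalty gradient $\alpha(x_\alpha - y_\alpha)$ vanishes, which is handled by the alternative clauses in Definition \ref{D:cgg} exactly as in \cite[Thm.~4.1]{CGG}.

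The main obstacle is the singularity of $F$ at $q=0$, which is the whole reason one needs the modified definition in the first place; this is where the equivalence proved in Proposition \ref{P:equivalence} does the essential work, turning a comparison statement about a singular equation into one fitting the general framework of \cite{CGG, GGIS}. Since the argument is not specific to the mean-curvature case $p=1$ (it relies only on degenerate ellipticity and the uniform control of the lower-order structure, both of which hold for all $p>1$), the conclusion follows.
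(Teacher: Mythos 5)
Your proof is correct and takes essentially the same approach as the paper: reduce to the CGG framework via Proposition \ref{P:equivalence} and then invoke Theorem 4.1 of \cite{CGG} (or the version in \cite{GGIS}). The paper's proof is literally just this citation; you additionally verify degenerate ellipticity and sketch the doubling-of-variables argument, which is simply unpacking what that citation covers, so the substance is the same.
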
 

Using again Proposition \ref{P:equivalence}, we obtain a comparison theorem when $\Omega = \Rn$, see Theorem 2.1 in \cite{GGIS}. This implies uniqueness in the  Cauchy problem (the reader should keep in mind that, for the notions adopted in this paper, all solutions are a priori assumed to be bounded).

\begin{thrm}\label{T:max3}
Let $u$ and $v$ respectively be  sub- and supersolution of \eqref{me} in $\Rn \times [0, \infty)$ such that  $u (x, 0) \leq v(x, 0 )$. Let $u(\cdot, 0)$ and $v(\cdot, 0)$ be uniformly continuous. Then,  $u \leq v $  in $\Rn \times [0, \infty)$.
\end{thrm}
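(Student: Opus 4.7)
The plan is to reduce Theorem \ref{T:max3} to the comparison principle of Giga--Goto--Ishii--Sato (Theorem 2.1 in \cite{GGIS}), which is tailored to singular degenerate parabolic equations on unbounded domains. I would proceed in three steps.

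First, by Proposition \ref{P:equivalence}, $u$ and $v$ are, respectively, a subsolution and a supersolution of \eqref{me} in the sense of Definition \ref{D:cgg}. This is exactly the same test-function formulation (with the inf/sup over unit vectors $a$ at points where $D\phi = 0$) that underlies the setup in \cite{GGIS}, so the hypotheses are in the right form to apply the machinery there.

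Second, I would verify that the nonlinear operator associated with \eqref{me},
\[
F(\sigma, R) = -\Big(\delta_{ij} + (p-2)\frac{\sigma_i \sigma_j}{|\sigma|^2}\Big) R_{ij}, \qquad \sigma \neq 0,
\]
extended by the usual inf/sup procedure at $\sigma = 0$, meets the structural assumptions of \cite{GGIS}: it is degenerate elliptic, independent of $(x,t,u)$, continuous off the singular set $\{\sigma = 0\}$, and compatible with the closure procedure at $\sigma = 0$. Each of these is immediate from the explicit form.

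Third, the boundedness of $u$ and $v$ on $\Rn \times [0,\infty)$ supplies the growth control at spatial infinity needed to introduce a penalty of the form $\delta e^{\beta t}(1+|x|^2)^{1/2}$ in the doubling-of-variables argument, forcing the relevant maximum to be attained at a finite point; uniform continuity of $u(\cdot,0)$ and $v(\cdot,0)$ provides the modulus at $t=0$. Applying Theorem 2.1 of \cite{GGIS} then delivers $u \leq v$ on $\Rn \times [0,\infty)$.

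The main obstacle I anticipate is the combination of the singularity of $F$ at $D\phi = 0$, which requires the inf/sup relaxation and complicates the use of the Crandall--Ishii lemma, with the unboundedness of the domain, which prevents a naive doubling-of-variables argument without a growth penalty. Both difficulties are already resolved inside \cite{GGIS}; this is precisely why the efficient route is the reduction via Proposition \ref{P:equivalence} rather than a direct proof.
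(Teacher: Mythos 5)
Your proposal is correct and follows the same route the paper takes: the paper also establishes Theorem \ref{T:max3} by invoking Proposition \ref{P:equivalence} to pass to the notion of solution in the sense of \cite{CGG} and then applying Theorem 2.1 of \cite{GGIS}, noting that uniform continuity of the initial data is exactly what guarantees hypothesis (A2) of that theorem. You have merely spelled out slightly more explicitly the structural verifications that the paper leaves implicit in its citation.
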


We note explicitly that the assumption that the initial datum be uniformly continuous in Theorems \ref{T:max3} and \ref{T:max4} is needed to guarantee that the  condition (A2) in Theorem 2.1 of \cite{GGIS} be satisfied.
After a  minor modification in the proof of Theorem \ref{T:max3}, we can assert the following.

\begin{thrm}\label{T:max4}
Let $u$ and $v$ be two solutions of \eqref{me} in $\Rn \times [0, \infty)$. Let  $u(\cdot, 0)$ and $v(\cdot, 0)$ be uniformly  continuous. Then,  
\[
\underset{\Rn \times [0, \infty)}{\sup}\ | u - v| = \underset{\Rn}{\sup}\ |u(\cdot, 0) - v(\cdot, 0)|.
\]
\end{thrm}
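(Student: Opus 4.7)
The proof proposal is straightforward: Theorem \ref{T:max4} reduces to Theorem \ref{T:max3} once one exploits the invariance of equation \eqref{me} under addition of constants, which was recorded in Remark \ref{R:scales}.

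First I would set
\[
M = \sup_{\Rn} |u(\cdot,0) - v(\cdot,0)|,
\]
which is finite by the boundedness of $u$ and $v$. One direction is immediate: evaluating $|u-v|$ on $\Rn \times \{0\}$ yields
\[
\sup_{\Rn \times [0,\infty)} |u-v| \;\ge\; \sup_{\Rn} |u(\cdot,0) - v(\cdot,0)| = M.
\]

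For the reverse inequality, I invoke Remark \ref{R:scales}: for any $c \in \R$, the function $v + c$ (resp.\ $u + c$) is again a viscosity solution of \eqref{me}, and its initial datum remains uniformly continuous since a translation by a constant preserves uniform continuity. By the definition of $M$ we have $u(\cdot,0) \le v(\cdot,0) + M$ on $\Rn$. Applying Theorem \ref{T:max3} to the subsolution $u$ and the supersolution $v + M$ — both solutions of \eqref{me} with uniformly continuous initial data satisfying the requisite initial ordering — yields
\[
u(x,t) \;\le\; v(x,t) + M \quad \text{on } \Rn \times [0,\infty).
\]
Exchanging the roles of $u$ and $v$ gives $v \le u + M$. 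Combined, this shows $|u-v| \le M$ pointwise, hence $\sup_{\Rn \times [0,\infty)}|u-v| \le M$, which together with the opposite inequality gives the claimed identity.

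The only point that needs a moment of care is the verification that the hypotheses of Theorem \ref{T:max3} (in particular assumption (A2) of \cite{GGIS}) are preserved under the additive shift; but since a uniformly continuous function plus a constant is uniformly continuous with the same modulus, this presents no real obstacle. In this sense the statement is indeed a direct consequence of Theorem \ref{T:max3} together with the scaling Remark \ref{R:scales}, and no further modification of the comparison argument from \cite{GGIS} is required.
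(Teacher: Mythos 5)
Your proof is correct, and it takes a genuinely different and more economical route than the paper's. The paper states that Theorem~\ref{T:max4} follows ``after a minor modification in the proof of Theorem~\ref{T:max3}'' and then re-runs the doubling-of-variables argument from \cite{GGIS} by contradiction: it introduces $\omega(x,y,t)=u(x,t)-v(y,t)$, the barrier $B(x,y,t)=\delta(|x|^2+|y|^2)+\gamma/(T-t)$, and the penalization $\xi$, shows that the supremum of $\omega-\xi$ is attained at an interior point, and then invokes Propositions 2.4--2.6 of \cite{GGIS} to reach a contradiction. You instead observe that Remark~\ref{R:scales} (invariance of \eqref{me} under addition of constants) lets one set $M=\sup_\Rn|u(\cdot,0)-v(\cdot,0)|<\infty$ (finite because solutions are by definition bounded), note that $u(\cdot,0)\le v(\cdot,0)+M$, and apply Theorem~\ref{T:max3} to the subsolution $u$ and the supersolution $v+M$ (whose initial datum is still uniformly continuous) to get $u\le v+M$, with the symmetric inequality by exchanging roles; combining with the trivial inequality $\sup_{\Rn\times[0,\infty)}|u-v|\ge M$ from restriction to $t=0$ finishes the proof. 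The advantage of your argument is that it treats Theorem~\ref{T:max3} as a black box and avoids re-opening the technical machinery of the comparison argument in \cite{GGIS}; the paper's approach is self-contained at the level of the penalization scheme but is strictly more work. Both proofs ultimately rest on the same comparison machinery from \cite{GGIS}, so the difference is one of presentation and modularity rather than of mathematical substance.
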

\begin{proof}
We argue by contradiction. Suppose the conclusion of the theorem be not true. Then, without loss of generality, we can assume that there exist $T > 0$ such that
\[
\underset{\Rn \times [0, T]}{\sup}\ (u-v)  > \underset{\Rn}{\sup}\ [u(\cdot,0)-v(\cdot,0)]^{+} > 0.
\]
Otherwise, if  $\underset{\Rn}{\sup}\ [u(\cdot,0)-v(\cdot,0)]^{+} = 0$, and Theorem \ref{T:max3} would  imply  $\underset{\Rn \times [0, T]}{\sup}\ (u-v) \leq 0$.
Following \cite{GGIS}, we set $\omega(x, y, t) = u(x, t) - v(y, t)$, $B(x, y, t)=  \delta(|x|^{2} + |y|^{2}) + \frac{\gamma}{T-t}$, ($B$ plays the role of barrier), and $\xi(x, y, t)= \frac{|x-y|^{4}}{\ve} + B(x, y, t)$. We then consider 
\[
\Phi(x, y , t) = \omega( x, y , t) - \xi(x, y, t).
\]
By the contradiction assumption we have 
\begin{align*}
\alpha & = \underset{\theta \to 0}{\lim}\ \sup\ \{\omega(x, y, t) \mid |x-y| < \theta,\ 0\le t \leq T\} \geq \underset{0\le t \leq T}{\sup}\ \omega(x,x,t)
\\
& = \beta > \underset{\Rn}{\sup}\ [u(\cdot,0)-v(\cdot,0)]^{+} =\beta_{1}.
\end{align*}
Similarly to Proposition 2.4 in \cite{GGIS}, we can find sufficiently small $\delta_{0}, \gamma_{0}$ such that 
\[
\sup\ \Phi( x,y, t) >   \beta_{1},
\]
for all $\delta < \delta_{0}$, $ \gamma < \gamma_{0}$,
and because of the barrier function $B$, we obtain as in Proposition 2.5 in \cite{GGIS} that the sup is attained  at some $(x_1, y_1, t_1)$ with  $t_{1} < T $. Now, by making use of the condition that the initial datum is uniformly continuous, which guarantees the validity of (A2) in Theorem 2.1 in \cite{GGIS}, as in Proposition 2.6 in \cite{GGIS} we can find $\ve_{0}$ small enough such that  for all $\ve < \ve_{0}$,  sup $\Phi$ is attained at  $(x_1, y_1, t_1)$ with $t_{1} > 0$ (i.e., at an interior point). The rest of the proof is now identical to that in \cite{GGIS}.

\end{proof}

In closing, we mention the following additional properties of solutions which can be established as in \cite{ES1}:
\begin{itemize}
\item[1.]  Assume  $u_{k}$  is  a  viscosity solution  of \eqref{me} for  $k = 1, 2,...$,  and  that $ u_{k} \to  u$ locally  uniformly   on $\Rn \times [0,\infty)$. Then, $u$ is a  viscosity solution.  An analogous assertion holds for subsolutions and  supersolutions.
\item[2.] (Convexity preserving property, see Theorem 3.1 in \cite{GGIS}) If $u$ is a (bounded) solution to the Cauchy problem and  if the initial datum $g$ is concave/convex and  globally Lipschitz, then $u(\cdot, t)$ is concave/convex for all $t$. This property continues to be true if instead of boundedness, linear growth in $x$ is allowed.
\item[3.] If $p=1$ and $\Phi$ is  any smooth function, then $\Phi(u)$ is a viscosity solution if such is $u$, see \cite{ES1}. This property is no longer true in general when $p>1$. For instance, it is violated by the standard heat equation ($p=2$).
\end{itemize}



\section{ An explicit solution for the case $p > 1$}\label{S:es}

In this section we exploit the scaling properties of \eqref{me} to construct an interesting explicit global viscosity solution of \eqref{me}, which we later use to establish a Tichonoff type maximum principle for the equation \eqref{me}, see Theorem \ref{T:gcp} below. In fact, such explicit solution will also be used in a crucial way in the proof of the monotonicity results in Theorem \ref{T:1mon}  and Theorem \ref{T:struwe} below. Here is the relevant result.

\begin{prop}\label{P:es}
For any $p>1$ the function
\[
G_p(x,t) = t^{- \frac{n+p-2}{2(p -1)}} \exp \left({-\frac{|x|^2}{4(p-1)t}}\right),
\]
is a classical solution of \eqref{me} in $(\Rn -\{0\})\times(0,\infty)$, and a viscosity solution in $\Rn\times(\ve,\infty)$ for all $ \ve > 0 $.
\end{prop}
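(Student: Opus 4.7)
The plan is to verify by direct computation that $G_p$ solves \eqref{me3} classically away from $x=0$, and then to check separately the degenerate-gradient clauses of Definitions \ref{D:vsub}--\ref{D:vsup} at points of the form $(0,t_0)$. For brevity set $u = G_p$, $\alpha = \frac{n+p-2}{2(p-1)}$, and $\beta = \frac{1}{4(p-1)}$, so that $u(x,t) = t^{-\alpha}\exp(-\beta|x|^2/t)$.

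First I would differentiate $\log u$ to obtain
\[
Du = -\tfrac{2\beta x}{t}\,u, \qquad |Du|^2 = \tfrac{4\beta^2|x|^2}{t^2}\,u^2, \qquad u_t = u\Bigl[-\tfrac{\alpha}{t} + \tfrac{\beta|x|^2}{t^2}\Bigr],
\]
and $u_{ij}=u\bigl[4\beta^2 x_i x_j/t^2 - 2\beta\delta_{ij}/t\bigr]$. For $x \neq 0$ we have $u_i u_j/|Du|^2 = x_i x_j/|x|^2$, and a direct summation yields
\[
\Bigl(\delta_{ij} + (p-2)\tfrac{u_i u_j}{|Du|^2}\Bigr)u_{ij} \;=\; u\Bigl[\tfrac{4\beta^2(p-1)|x|^2}{t^2} - \tfrac{2\beta(n+p-2)}{t}\Bigr].
\]
Matching coefficients with $u_t$ forces $4\beta^2(p-1)=\beta$ and $2\beta(n+p-2)=\alpha$, which are precisely our values; this proves the classical statement on $(\Rn\setminus\{0\})\times(0,\infty)$.

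For the viscosity statement on $\Rn\times(\ve,\infty)$, the function $u$ is smooth, so it suffices to verify the degenerate-gradient clauses at points $(0,t_0)$ with $t_0>\ve$. Radial symmetry collapses the spatial Hessian to $D^2u(0,t_0) = -M\,I$ with $M = 2\beta\, t_0^{-\alpha-1}>0$, while $u_t(0,t_0) = -(n+p-2)M$. For the subsolution check, any test function $\phi$ with $u-\phi$ attaining a local maximum at $(0,t_0)$ satisfies $\phi_t(0,t_0) = -(n+p-2)M$, $D\phi(0,t_0)=0$, and $A:=D^2\phi(0,t_0)\geq -M\,I$. I would pick $a$ to be a unit eigenvector of $A$: the top eigenvector when $p\geq 2$, the bottom when $1<p<2$. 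In either case, using that every eigenvalue of $A$ is $\geq -M$, a short eigenvalue estimate yields
\[
\mathrm{tr}(A) + (p-2)\,a^T\! A\,a \;\geq\; -(n+p-2)M \;=\; \phi_t(0,t_0),
\]
which is the required inequality. The supersolution check is symmetric, now with $A\leq -M\,I$ and the opposite eigenvector choice.

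The main obstacle is this case split at $x=0$: the coefficient $p-2$ changes sign at $p=2$, and the quasilinear term $(p-2)\,a_i a_j$ in \eqref{me3} is undefined exactly where $Du$ vanishes. What rescues the argument is the radial isotropy of $G_p$ at the origin, which renders $D^2 G_p(0,t_0)$ a scalar multiple of the identity; a single extremal eigenvector of $D^2\phi$ then balances the trace against the quadratic form in both ranges of $p$. Absent this isotropy, the eigenvalue bookkeeping would be considerably more delicate.
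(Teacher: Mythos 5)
Your proof is correct, and your viscosity verification at $x=0$ is carried out via a genuinely different (and more explicit) route than the paper's.

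For the classical part, the paper arrives at $G_p$ constructively: it uses the parabolic-scaling invariance to posit a self-similar ansatz $u = t^{-\alpha} g(|x|^2/4t)$, reduces \eqref{me3} to a first-order linear ODE in $g$, and solves it, identifying $\alpha = \tfrac{n+p-2}{2(p-1)}$. You instead differentiate the explicit $G_p$ and match coefficients. Both are fine; the paper's derivation also explains where the exponent comes from, but the verification content is the same.

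For the viscosity part the methods diverge more substantially. The paper takes a sequence $z_k=(x_k,t)\to(0,t)$ with $x_k\neq 0$, sets $a_k = Du(z_k)/|Du(z_k)|$, extracts a convergent subsequence $a_k\to a$ with $|a|=1$, and lets $k\to\infty$ in the classical equation to conclude $u_t(0,t) = \mathrm{tr}\bigl[(I+(p-2)aa^T)D^2u(0,t)\bigr]$; the passage from this identity about $u$ to the test-function inequalities in Definitions \ref{D:vsub}--\ref{D:vsup} is left implicit, relying on the positive definiteness of $I+(p-2)aa^T$ (eigenvalues $1$ and $p-1$, both positive for $p>1$). You instead verify the test-function inequalities head-on: you compute $D^2 G_p(0,t_0)=-MI$ and $\p_t G_p(0,t_0)=-(n+p-2)M$, observe that $A=D^2\phi(0,t_0)$ is bounded below (resp.\ above) by $-MI$, and complete the estimate by choosing an extremal eigenvector of $A$ depending on $\mathrm{sgn}(p-2)$. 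This is correct, and arguably cleaner because it makes the viscosity test explicit. One small simplification you might note: since $D^2 G_p(0,t_0)=-MI$ is a multiple of the identity and $I+(p-2)aa^T$ is positive definite for every unit $a$, the inequality $\mathrm{tr}\bigl[(I+(p-2)aa^T)A\bigr]\ge \mathrm{tr}\bigl[(I+(p-2)aa^T)(-MI)\bigr]=-(n+p-2)M$ already holds for an arbitrary unit vector $a$ whenever $A\ge -MI$; your case split between top and bottom eigenvectors is therefore sufficient but not necessary. That same isotropy is exactly what the paper exploits, since after the limit any unit vector $a$ would serve equally well.
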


\begin{proof}
Suppose that $u$ be a  solution of \eqref{me}, then it is easily seen that $v(x, t) = u(\lambda x, \lambda^{2} t)$ is also a solution.
This suggests that we look for a solution of \eqref{me} in the form 
\[
u(x, t) = t^{-\alpha} g(|x|^{2}/4t), 
\]
where $g$ is a suitable function on the line. Proceeding formally, we work with the normalized equation \eqref{me3},
\[
u_{t} = \left(\delta_{ij} +  (p-2)\frac{u_iu_j}{|Du|^{2}}\right)u_{ij},
\]
 instead of \eqref{me}. To determine $g$ we compute
\[
u_{t} = - \frac{\alpha} {t^{\alpha + 1}} g - \frac{|x|^{2}}{4t^{\alpha + 2}} g'.
\]
Similarly, 
\[
D_j u = \frac{1}{2t^{\alpha + 1}} g' x_j,\ \ \ \   D_{ij} u = \frac{1}{2 t^{\alpha + 1}} g' \delta_{ij} + \frac{1}{4 t^{\alpha + 2}} g'' x_{i}x_{j}.
\]
Imposing that $u$ be a (classical) solution of \eqref{me3}, we find
\[
- \frac{\alpha}{ t^{\alpha+1}} g -  \frac{|x|^{2} }{4 t^{\alpha + 2}} g'  = \left\{\frac{n + p - 2}{2} g' + (p-1) \frac{|x|^{2}}{4t} g''\right\}  \frac{1}{t^{\alpha + 1}}.
\]
Cancelling off the powers of  $t$, and  letting $s = \frac{|x|^{2}}{4t}$,  we find
\[
g'' + \frac{1}{p-1} g' + \frac{n + p - 2}{2(p-1) s} g' + \frac{\alpha}{(p-1) s} g = 0.
\]
At this point, we choose $\alpha  =  \frac{n + p - 2}{2(p - 1)}$, obtaining
\[
\left(s^{\alpha} g' + \frac{1}{p-1} s^ {\alpha} g\right)' =  0,
\]
which implies 
\[
s^{\alpha} g' + \frac{1}{p -1} s^{\alpha} g =\ \text{const}.   
\]
At this point we easily see that the choice $g(s) = \exp(-\frac{s}{p-1})$ produces a solution of the latter ode corresponding to the choice $c = 0$ of the constant in the right-hand side. Since the function 
\[
u(x,t) = G_p(x,t) = t^{- \frac{n+p-2}{2(p -1)}} \exp \left({-\frac{|x|^2}{4(p-1)t}}\right),
\]
belongs to $C^\infty(\Rn\times (0,\infty))$, and its spatial gradient
vanishes only on the half-line $\{0\}\times (0,\infty)$, we conclude that $G_p$
is a classical solution of \eqref{me}  in $(\Rn - \{0\}) \times  (0,\infty)$ for every $p>1$. Now, consider $t>0$, and let $z_k = (x_{k},t) \to z = (0,t)$, with $x_k\not= 0$ for every $k\in \mathbb N$. Define $a_{k} =  \frac{Du(z_k)}{|Du(z_k)|}$. After possibly passing to a subsequence, we may assume that  $a_{k} \to a$, with  $|a|=1$. 
Since  at each $z_k$, \eqref{me3} is satisfied classically, we have
\[
u_{t}(z_k) =  (\delta_{ij} +( p-2)a_{k,i} a_{k,j})u_{ij}(z_k).
\]
Therefore, passing to the limit we obtain 
\[
u_{t}(z) =  (\delta_{ij} +( p-2)a_{i} a_{j})u_{ij}(z).
\]
Thus, $u$ is a viscosity solution in $\Rn \times (\ve,\infty)$  for all $\ve > 0 $.

\end{proof}

We note explicitly that, when  $p = 2$, $G_p$ is just a multiple of the classical heat kernel. In what follows we use a variant of the  special solution $G_p$ as a comparison function to obtain the following result for solutions of \eqref{me} which generalizes the classical global maximum principle of Tichonoff type for the heat equation. We emphasize that in the next result we assume that $u$ be a solution in the sense of Definition \ref{D:vsol}, but we do not a priori impose the condition that $u \in L^{\infty}(\Rn\times (0,\infty))$.

\begin{thrm}\label{T:gcp}
Let $u\in C(\Rn\times [0,\infty))$ be such that $u$ is a viscosity solution of \eqref{me} in $\Om\times [0,T]$ for every bounded open set $\Om\subset \Rn$ and every $T>0$. Assume further that for some $A, a>0$ the following growth estimate is satisfied:  
\[
u(x,t) \leq A e^{a|x|^{2}},\ \ \ \ \text{for every}\ (x,t)\in \Rn\times [0,\infty).
\]
Then,  
\[
\underset{\Rn \times [0, T]}{\sup}\ u = \underset{\Rn}{\sup}\ u(x,0).
\]  
\end{thrm}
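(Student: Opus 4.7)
The plan is to run the classical Tichonoff argument, using as comparison barrier a time-reversed variant of the explicit solution $G_p$ from Proposition \ref{P:es}. The growth hypothesis on $u$ forces the introduction of a supersolution that dominates $u$ on the lateral boundary of large cylinders, after which Theorem \ref{T:max2} (on those bounded cylinders) will close the loop once we let the radius go to infinity and the barrier scale to zero.

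The first step is to construct the barrier. Re-running the self-similar ansatz of Proposition \ref{P:es} with $T_0 - t$ in place of $t$ yields
\[
H_p(x,t) = (T_0 - t)^{-\frac{n+p-2}{2(p-1)}} \exp\left(\frac{|x|^2}{4(p-1)(T_0-t)}\right),
\]
which is a classical solution of \eqref{me} on $(\Rn \setminus \{0\}) \times [0,T_0)$ for every $T_0 > 0$. The viscosity-solution property on the line $\{0\}\times [0,T_0)$ is verified by the same unit-vector limit argument given at the end of the proof of Proposition \ref{P:es}. The key feature is that, unlike $G_p$, the exponent of $H_p$ carries a \emph{positive} sign in $|x|^2$, so $H_p$ grows at spatial infinity like $\exp\!\bigl(|x|^2/(4(p-1)(T_0-t))\bigr)$, and this growth rate can be made arbitrarily large by choosing $T_0$ small.

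Next, I would first treat the case of small $T$, namely $4(p-1)aT < 1$. Set $M = \sup_{\Rn} u(\cdot,0)$ (which we may assume finite) and pick $T_0 \in (T,\,1/(4(p-1)a))$, so that $1/(4(p-1)T_0) > a$. By Remark \ref{R:scales}, $v_\ve := M + \ve H_p$ is a viscosity solution, hence a supersolution, of \eqref{me} on $\Rn \times [0,T]$. At $t=0$ one has $v_\ve(\cdot,0) \ge M \ge u(\cdot,0)$. On the lateral boundary $\partial B_R \times [0,T]$, since $T_0 - t \le T_0$,
\[
v_\ve(x,t) \ge \ve\, T_0^{-\frac{n+p-2}{2(p-1)}} \exp\!\left(\frac{R^2}{4(p-1)T_0}\right),
\]
and the exponential rate exceeds $a$, so for every $\ve>0$ we can choose $R=R(\ve)$ large enough that $v_\ve \ge A e^{aR^2} \ge u$ on $\partial B_R \times [0,T]$. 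Theorem \ref{T:max2} then gives $u \le v_\ve$ on $\overline{B_R}\times [0,T]$. Fixing $(x,t)$, sending $R\to\infty$ and then $\ve \to 0^+$, one obtains $u(x,t) \le M$ throughout $\Rn \times [0,T]$.

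The extension to arbitrary $T>0$ is by iteration: partition $[0,T]$ into finitely many subintervals of length strictly less than $1/(4(p-1)a)$, and apply the preceding argument on each, noting that the conclusion on $[0,T_1]$ gives $\sup_{\Rn} u(\cdot,T_1) \le M$, which serves as the initial bound on the next interval, while the global growth hypothesis continues to control $u$ uniformly on every lateral boundary. I expect the only delicate point to be the viscosity-supersolution status of $H_p$ on the degeneracy set $\{x=0\}$, but this is a direct transcription of the limiting argument used for $G_p$ in Proposition \ref{P:es}; all remaining steps are the standard Tichonoff mechanism once the correct self-similar barrier is in hand.
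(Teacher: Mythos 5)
Your proof is correct and follows essentially the same route as the paper: both use the self-similar supersolution $(T_0-t)^{-\frac{n+p-2}{2(p-1)}}\exp\bigl(|x|^2/(4(p-1)(T_0-t))\bigr)$ (the paper centers it at an arbitrary $y\in\Rn$ and scales by $\mu\to 0$ rather than letting $R\to\infty$ at fixed center, but these are cosmetic differences), verify the lateral-boundary domination under the smallness condition $4a(p-1)T<1$, invoke the bounded-domain comparison principle, and then iterate over short time intervals. The point you flag as possibly delicate — the viscosity property of the barrier across $\{x=0\}$ — is indeed handled exactly as in Proposition \ref{P:es}, since the barrier is $C^\infty$ there; the paper takes this for granted without comment.
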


\begin{proof}
If $\underset{\Rn}{\sup}\ u(x,0) = + \infty$, there is nothing to prove, so we will assume without loss of generality that 
\[
K = \underset{\Rn}{\sup}\ u(x,0) < + \infty.
\]
Let  us first assume that  $4a( p -1 )T < 1$. This implies that $4a( p-1)(T + \ve) < 1$, for some  $\ve > 0$. Now,  fix $y \in \Rn $ and  $\mu > 0$. 
Let 
\[
v( x, t) =  K  + \frac{\mu}{( T+\ve - t)^{\frac{n+p-2}{2p-2}}} e^{\frac{|x-y|^{2}}{4(p-1)(T+ \ve - t )}}.
\]
As already observed in Proposition \ref{P:es}, the equation \eqref{p} is invariant under addition and multiplication by a constant. One can thus easily check that $v$ is a solution of  \eqref{me} in $\Om_T = \Om \times [0, T]$ for every bounded open set $\Om\subset \Rn$.
Now  let  $\Om = B(y, r)$. So $u$ and $v$ are solutions of \eqref{me} in  $\Om_{T} $, and 
$v( x, 0) \geq K  \geq u(x, 0)$ for every $x\in \overline \Om$.
On $ \partial \Om \times [0, T] $,  i.e  $ | x-y | = r$, we have
\[
v( x, t)=  K +  \frac{\mu}{( T+\ve - t)^{\frac{n+p-2}{2p-2}}} e^{\frac{r^{2}}{4(p-1)(T+ \ve - t )}}  \geq  K  +  \frac{\mu}{( T+\ve )^{\frac{n+p-2}{2p-2}}} e^{\frac{r^{2}}{4(p-1)(T+ \ve )}}.
\]
Now, we can write $\frac{1}{4(p-1)(T+ \ve) } = a  + \gamma$, for some $\gamma > 0$. Thus,  we  find that for $r$ large enough (since $ y$ is fixed)
\[
v( x, t) \geq K + \mu ( 4(p-1) ( a + \gamma ))^{\frac{n+ p-2}{2p-2}} e^{(a+ \gamma)r^{2}} > A e^{a( r+ |y|)^{2} }> u ( x, t),
\]
for every $(x,t)\in \partial \Om \times [0,T]$.
Therefore, by the comparison principle  Theorem \ref{T:max2} we conclude that
\[
u(x, t) \leq  \underset{\Rn}{\sup}\ u(x,0) + \frac{\mu}{( T+\ve - t)^{\frac{n+p-2}{2p-2}}} e^{\frac{|x-y|^{2}}{4(p-1)(T+ \ve - t )}}
\]
for all $(x, t) \in \overline \Om \times [0, T]$. Letting  $\mu \to  0$, we reach the conclusion 
\[
u(x, t) \leq  \underset{\Rn}{\sup}\ u(x,0),\ \ \ \ \text{in}\  \overline B(y,r) \times [0, T].
\]
By the arbitrariness of $r>0$ we conclude that the above inequality holds in $\Rn\times [0,T]$, provided that 
$4a(p-1)T < 1$. The dsired conclusion now follows by repeatedly  applying this result  to the intervals $[0,  T_{1}], [T_{1}, 2T_{1}]$ and so on, where, say,  $ T_{1}  = 1/8a(p-1)$.

\end{proof}

By combining Theorem \ref{T:gcp} with  Theorem \ref{T:max3},  we obtain the  following result.

\begin{prop}[Uniqueness for Cauchy problem]\label{P:uniq}
In the class of functions $u\in C(\Rn\times [0,\infty))$ satisfying for some $A, a > 0$ the growth estimate
\[
|u(x,t)| \leq A e^{a |x|^{2}},\ \ \ \ \ (x,t)\in \Rn\times[0,\infty),
\]
there exists at most one solution to the Cauchy problem for \eqref{me} with a bounded  uniformly continuous initial datum.  
\end{prop}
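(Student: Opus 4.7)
My plan is to deduce uniqueness from Theorem \ref{T:gcp} combined with Theorem \ref{T:max4}, by first upgrading the two-sided quadratic exponential growth hypothesis into a global $L^\infty$ bound driven by the initial datum, and then invoking the comparison theorem for bounded solutions with uniformly continuous data. Throughout, let $u_1$ and $u_2$ denote two solutions of the Cauchy problem (in the localized sense of Definition \ref{D:vsol} on every bounded cylinder) sharing the same bounded uniformly continuous initial datum $g$, and both satisfying the growth estimate $|u_i(x,t)| \leq A e^{a|x|^2}$.

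First I would show that each $u_i$ is globally bounded. Since $u_i(x,t) \leq A e^{a|x|^2}$ with $\sup_{\Rn} u_i(\cdot, 0) = \sup_{\Rn} g < \infty$, Theorem \ref{T:gcp} immediately yields the one-sided bound $u_i(x,t) \leq \sup_{\Rn} g$ on $\Rn \times [0, \infty)$. To obtain the matching lower bound, I would invoke Remark \ref{R:scales} with $k = -1$, $c = 0$: the function $-u_i$ is again a viscosity solution of \eqref{me}, and it inherits the same growth estimate $-u_i(x,t) \leq A e^{a|x|^2}$. Applying Theorem \ref{T:gcp} to $-u_i$ then gives $-u_i(x,t) \leq \sup_{\Rn}(-g) = -\inf_{\Rn} g$, i.e., $u_i(x,t) \geq \inf_{\Rn} g$. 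Combining, $|u_i(x,t)| \leq \sup_{\Rn}|g|$ throughout $\Rn \times [0, \infty)$, so $u_i \in L^\infty(\Rn \times [0, \infty))$ and is therefore a bounded viscosity solution in the sense of Definition \ref{D:vsol}.

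With both $u_1$ and $u_2$ now known to be bounded viscosity solutions sharing the uniformly continuous initial datum $g$, Theorem \ref{T:max4} applied to the pair $(u_1, u_2)$ gives directly
\[
\sup_{\Rn \times [0, \infty)} |u_1 - u_2| = \sup_{\Rn} |u_1(\cdot, 0) - u_2(\cdot, 0)| = 0,
\]
so that $u_1 \equiv u_2$ in $\Rn \times [0, \infty)$. I do not foresee any serious obstacle in this argument: the only point requiring a brief justification is that $-u_i$ is again a viscosity solution, which is either a direct consequence of Remark \ref{R:scales}, or can be read off from equation \eqref{me3} upon observing that the coefficient $\delta_{ij} + (p-2)\frac{u_i u_j}{|Du|^2}$ is unchanged under $u \to -u$ while both sides of the equation simply change sign. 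Everything else is a clean application of results already established earlier in the paper.
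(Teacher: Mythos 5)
Your proof is correct and follows essentially the route the paper indicates (the paper simply states that Proposition \ref{P:uniq} follows by combining Theorem \ref{T:gcp} with Theorem \ref{T:max3}, without spelling out the argument). You correctly identify that the role of Theorem \ref{T:gcp}, applied to both $u_i$ and $-u_i$ via Remark \ref{R:scales}, is to upgrade the quadratic-exponential growth bound to a global $L^\infty$ bound in terms of the initial datum, after which the bounded-solution comparison (you use Theorem \ref{T:max4}, the paper cites Theorem \ref{T:max3}; the two are interchangeable here) forces $u_1\equiv u_2$.
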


\section{Existence of   solutions}\label{S:existence}

In this section using  the method of regularization as in \cite{ES1} we prove the existence of solutions to the Cauchy problem and the Cauchy-Dirichlet problem. An alternate approach to the existence of solutions to the Cauchy problem is based on the adaptation of the method of Perron described in Theorem 4.9 of \cite{OS}. However, we have preferred the method of regularization since it facilitates the study, in the subsequent sections, of questions of convergence, large time behavior, gradient bounds and monotonicity.

For every $i,j=1,...,n$, and $\sigma\in \Rn\setminus\{0\}$ consider the matrix associated with \eqref{p} 
\begin{equation}\label{matrix}
a_{ij}(\sigma)=\delta_{ij} + (p-2) \frac{\sigma_i \sigma_j}{ |\sigma|^{2}}.
\end{equation}
In the sequel for a given $\ve>0$ we consider the regularized matrix 
\[
a^{\ve}_{ij}(\sigma)=  \delta_{ij} + (p-2) \frac{\sigma_i \sigma_j}{\ve^{2} + |\sigma|^{2}},\ \ \ \ i, j = 1,...,n.
\]
It is easily seen that for every $\sigma\in \Rn$ and every $\xi\in \Rn$ the following uniform ellipticity condition is satisfied, independently of $\ve>0$,
\begin{equation}\label{ue}
\min\{1,p-1\}\ |\xi|^2 \le a^{\ve}_{ij}(\sigma)\xi_i\xi_j  \leq \max\{1,p-1\}\ |\xi|^2.
\end{equation}
 
\subsection{Approximations} 

We consider solutions of  
\begin{equation}\label{e:1aprox}
\begin{cases}
u_t =  a^{\ve}_{ij}(Du)u_{ij}\ \ \ \ \ \text{in}\ \Rn \times [0,\infty)
\\
u(x,0) =   g(x),\ \ \ \ \  \ x\in \Rn,
\end{cases}
\end{equation}
where  $g$ is smooth and, for  some  $S > 0$, $g$ is constant  for $|x| \geq  S$. To be precise, we should indicate with $u = u^{\ve}$ the solution of \eqref{e:1aprox}, and when this will be necessary we will do so (although the notation $u^\ve$ is the same that indicates the sup convolutions mentioned in the opening of Section \ref{S:cp}, hereafter, there will be no occasion for confusion).     
In the case $ p =1$ studied in \cite{ES1}, the uniform ellipticity breaks down, as can be seen from \eqref{ue} above. Because of this, the authors had to consider the further regularization $a^{\ve,\eta}_{ij}(\sigma)  =  a^{\ve}_{ij}(\sigma)+\eta \delta_{ij}$. This is not needed for the situation $p >1$ studied in this paper.
In a classical way, for $p>1$, we obtain smooth  bounded  solutions $u^{\ve}$ of \eqref{e:1aprox}. 
Using the classical theory, we have
\[
 ||Du^{\ve}||_{L^{\infty}(\Rn \times [0, \infty))} =  ||Dg||_{L^\infty(\Rn)},
 \]
and   
\[
||u^{\ve}_{t}||_{L^{\infty}(\Rn \times [0, \infty))} \leq C  ||D^{2}g||_{L^\infty(\Rn)}, 
\]
where $C$ does not depend on $\ve$. For instance, the second inequality above can be justified differentiating the equation with respect to $t$. Then, by the maximum principle we obtain
 \[
 ||u^{\ve}_{t}||_{L^\infty(\Rn \times [0, \infty))} = ||u^{\ve}_{t}(., 0)||_{L^\infty(\Rn)} = ||a^{\ve}_{ij} (Dg)D_{ij} g||_{L^\infty(\Rn)} \leq (p-1) ||D^2 g||_{L^\infty(\Rn)}.
 \]
Also from the uniform ellipticity \eqref{ue}, the smoothness of the coefficients and classical estimates, we obtain  $L^{\infty}$  bounds on all higher-order derivatives, possibly depending on $\ve$ and $g$. An outline goes as follows. 

We  take a sequence of smooth domains $\Omega ^{N} \nearrow  \Rn$. Given any $T>0$, we consider the finite cylinders $\Om^N_T = \Om^N \times (0,T)$, and indicate with $\p_p \Om^N_T = (\p \Om^N \times (0,T))\cup (\Om^N \times \{0\})$ its parabolic boundary. For each $n\in \mathbb N$,   and $\ve>0$, we solve the Cauchy-Dirichlet  problem  
\[
\begin{cases}
u^{\ve,N}_{t}  =  a^{\ve}(Du^{\ve,N}) u^{\ve,N}_{ij},\ \ \ \ \text{in}\ \Om^N_T,
\\
u^{\ve,N} = g\ \ \ \text{on}\ \p_p \Om^N_T\ \  (\text{one should keep in mind that}\ g(x,t) = g(x)).
\end{cases}
\]
Since  $g$ is  constant  outside  a   compact  set, for sufficiently large $N$ the compatibility conditions are obeyed. Therefore, the  existence of unique solutions in the class  $H^{{2+\alpha},  {1+ \alpha/2}}(  \overline{ \Omega^{N} } \times  [0, T])$  is guaranteed by Theorem 4.1, on p. 558 in \cite{LU}. Because of the parabolic comparison principle, the solutions  obtained  for various $T$'s agree on the intersection of the corresponding cylinders, and thus we have  a solution in $\Omega^{N} \times  [0, \infty)$. By the maximum principle,  
\[
||u^{\ve,N}||_{L^\infty(\Om_N\times (0,\infty))}  \leq ||g||_{L^\infty(\Rn)}.
\]
Now  using  a  barrier  argument as in \cite{SZ}, p. 586, we have for a constant $C$, which depends on $p-1$,
\[
||u^{\ve,N}_{t}|| _{\infty} \leq  C ||D^{2} g||.
\]
The local Schauder theory gives the existence of higher derivatives. See, for instance, Theorem 10 on p. 72 in \cite{F}. This allows differentiation with respect to the space variables $x_{k}$. Now by Theorem 3.4 on p. 554 in \cite{LU}, we  have for  $ N >> m $
\[
||D u^{\ve,N}||_{\Omega^{m}_{T}} \leq c(m,g).
\]
Consequently, by Theorem 1.1 on p. 517 in \cite{LU}, we have the following H\"older norm estimate  
\[
< Du^{\ve,N}>_{\alpha} \leq C(m, g), \ \ \ \ \text{for}\ N >> m. 
\]
From such estimate, by applying the Schauder estimates as on p. 121 in \cite{F}, or on p. 352 in \cite{LU}, we obtain for $N  > > m$  and any $T>0$, 
\[
|| u^{\ve,N}||^{\Omega^{m}_{T}} _{2+ \alpha}   \leq   C_{1}(m,g,T). 
\]
By  a standard diagonal process,  we now obtain a subsequence that  converges with its first and second derivatives uniformly in compact subsets of $\Rn \times [0,\infty)$ to  some bounded $u^\ve$ which  solves  \eqref{e:1aprox}.
Now  given any   compact   set  $K$  in  $\Rn \times  [0,\infty)$, by applying  Theorem  3.4, page 554 in \cite{LU}, we deduce  a  bound on $||Du^\ve||_{L^\infty(K)}$  independent of  $K$, and thus  an uniform  bound  on  $Du^\ve$  in  $\Rn \times  [0, \infty)$. Therefore, for this $u^\ve$, we have  
\begin{equation}\label{e:e10}
\ ||u^\ve||_\infty =||g||_\infty, \ \ \ ||u^{\ve}_{t}||_\infty \leq (p-1) ||D^{2}g||_\infty,\   \ \ ||Du||_\infty \leq C(\ve,g). 
\end{equation}
At this point, the bound $C$ for $||Du||_\infty$ as above might possibly depend on $\ve$ (see Theorem 3.4 on p. 554 in \cite{LU} for the dependence of the constant),
and we also have $Du^\ve(\cdot,t) \to Dg$  as  $t \to 0$ since this happens for $Du^{\ve, N}$.
Theorem 1.1, p. 517 in \cite{LU} guarantees, in any compact set $K$, a uniform bound on the H\"older norm of $Du^\ve$ independent of $K$. As before, the local Schauder theory as in Theorem 10, p. 72 in \cite{F}, gives the existence of higher derivatives.
Consequently, by appealing to the linear theory, we can conclude (for instance, by Theorem  5.1 on p.320 in \cite{LU}), the smoothness of $u^\ve$, and the bounds on the higher derivatives of $u^\ve$. By differentiating the equation with respect to $x_{k}$, we obtain 
\[
u^\ve_{kt} =  a^{\ve}_{ij}(Du^\ve) u^\ve_{kij}  + a^{\ve}_{ij, \sigma_{\ell}} (Du^\ve) u^\ve_{\ell k}u^\ve_{ij}.
\]
Since $Du^\ve$ is  bounded, by the maximum  principle, we have $||Du^\ve||_{\infty} = ||Dg||_{\infty}$  (and thus  we  get   a   constant  $C$ independent of  $\ve$).
Thus, we can  finally  assert  
\[
||u^\ve, Du^\ve, u^\ve_{t} ||_\infty \leq  C(g),
\]
where $C$ is independent of $\ve>0$.

\subsection{Passage to the limits}\label{SS:passage}

We  have the following  existence theorem analogous to Theorem 4.2 in \cite{ES1}.

\begin{thrm}\label{T:existence}  Given a continuous function $g$, which for some $S > 0$ is constant  for $|x| \ge S$,  there  exists  a  unique viscosity solution $u$ of \eqref{me} such that $u = g$ on $\Rn \times \{0\}$.   
\end{thrm}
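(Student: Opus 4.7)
The plan is to run the standard regularization argument already set up in this section and then remove the smoothness assumption on $g$ by a second approximation. Uniqueness will come for free from the comparison principles of Section \ref{S:cp}.

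First I would handle the case in which the initial datum is smooth. Assume $g\in C^\infty(\Rn)$ is constant outside a compact set. The construction preceding the theorem produces for each $\ve>0$ a classical solution $u^\ve\in C^\infty(\Rn\times[0,\infty))$ of \eqref{e:1aprox} satisfying the uniform estimates
\[
\|u^\ve\|_\infty + \|Du^\ve\|_\infty + \|u^\ve_t\|_\infty \le C(g),
\]
with $C(g)$ independent of $\ve$. The family $\{u^\ve\}$ is therefore uniformly bounded and equi-Lipschitz in both $x$ and $t$ on $\Rn\times[0,\infty)$, hence equicontinuous. By the Arzelà--Ascoli theorem and a standard diagonal extraction on an exhaustion of $\Rn\times[0,\infty)$ by compact sets, I can pass to a subsequence $\ve_k\to 0$ with $u^{\ve_k}\to u$ locally uniformly. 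Writing \eqref{e:1aprox} in the form
\[
u^\ve_t = a^\ve_{ij}(Du^\ve)u^\ve_{ij},
\]
one sees that $u^\ve$ is in particular a viscosity solution of the regularized equation, and the limiting matrix $a^\ve_{ij}(\sigma)\to a_{ij}(\sigma)$ as $\ve\to 0$ whenever $\sigma\neq 0$. The property of stability of viscosity solutions under locally uniform limits (item 1 of the list at the end of Section \ref{S:cp}, suitably adapted to the $\ve$-dependent operators, using Definition \ref{D:ed}) yields that the limit $u$ is a viscosity solution of \eqref{me} in the sense of Definition \ref{D:vsol}. Since $u^\ve(\cdot,0)=g$ for every $\ve$, the uniform Lipschitz bound in $t$ forces $u(\cdot,0)=g$.

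Next I would remove the smoothness hypothesis on $g$. Given a merely continuous $g$ that is constant outside some compact set, let $\rho_\delta$ be a standard mollifier and set $g_\delta = g\ast\rho_\delta$; for $\delta$ small $g_\delta$ is smooth, constant outside a slightly larger compact set, and $g_\delta\to g$ uniformly on $\Rn$. By the first step, each $g_\delta$ is the initial datum of a bounded viscosity solution $u_\delta$ of \eqref{me}. Since $g$, hence each $g_\delta$, is uniformly continuous on $\Rn$, Theorem \ref{T:max4} applies and gives
\[
\sup_{\Rn\times[0,\infty)}|u_\delta-u_{\delta'}| \le \sup_{\Rn}|g_\delta-g_{\delta'}|\to 0
\]
as $\delta,\delta'\to 0$. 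Thus $\{u_\delta\}$ is Cauchy in $L^\infty(\Rn\times[0,\infty))$ and converges uniformly to a bounded continuous $u$. Invoking once more the stability of viscosity solutions under uniform limits, $u$ solves \eqref{me} in the viscosity sense, and $u(\cdot,0)=g$ from uniform convergence.

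Uniqueness is immediate: if $u$ and $v$ are two bounded viscosity solutions with the same initial datum $g$, then, $g$ being uniformly continuous (continuous plus constant outside a compact set), Theorem \ref{T:max3} applied twice gives $u\le v$ and $v\le u$, so $u\equiv v$.

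The only step requiring care is the stability passage in the regularized equation, because the coefficients $a^\ve_{ij}(\sigma)$ themselves depend on $\ve$ and degenerate at $\sigma=0$ in the limit. The key point is that in Definition \ref{D:ed} the condition at a point where the test-function gradient vanishes is tested against \emph{some} unit vector $a$; so when $D\phi(z_0)=0$, one selects along the approximating sequence the unit vectors $a_{\ve_k}=D\phi(z_{\ve_k})/(\ve_k^2+|D\phi(z_{\ve_k})|^2)^{1/2}$ (or, if this is zero, any unit vector), extracts a subsequential limit, and passes to the limit in the pointwise inequality exactly as in the proof of Lemma \ref{l:pevol}. Everywhere else the passage is straightforward because $a^\ve_{ij}(\sigma)\to a_{ij}(\sigma)$ continuously on $\sigma\neq 0$.
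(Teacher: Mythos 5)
Your proof follows essentially the same two-step scheme as the paper: regularize via \eqref{e:1aprox}, extract a locally uniformly convergent subsequence from the uniform Lipschitz bounds and invoke stability of viscosity solutions to handle smooth data, then approximate a merely continuous $g$ by mollification together with Theorem \ref{T:max4}, and deduce uniqueness from Theorem \ref{T:max3}. The only difference is that you spell out the stability step at points where $D\phi(z_0)=0$ (choosing unit vectors $a_{\ve_k}$ and extracting a subsequential limit), whereas the paper simply refers to \cite{ES1} for this; your elaboration is correct and consistent with the argument given there.
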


\begin{proof}
Given   a  smooth $g$  constant  outside a  compact set by arguing as above we have solutions to the approximating problems. From  uniform  bounds on the  derivatives,  we   extract a subsequence $u^{\ve_k}$ so that  $u^{\ve_k} \to u$,  locally uniformly in $\Rn \times  [0, \infty)$, to  some   bounded  Lipschitz function  $u$  which inherits the Lipschitz constant  from the bounds on the second derivatives of  $g$. The proof  that $u$  is  a viscosity solution  is  now   identical  to that  given  in \cite{ES1}. We now analyze solvability for a  continuous $g$. We take smooth $g_{k}'s $ converging uniformly to $g$. Let the corresponding solutions be $u_{k}$. From Theorem \ref{T:max4} we have  $u_{k} \to u$ uniformly. Since $u$ is a uniform limit of solutions, it is itself a solution, and takes the initial values $g$. Uniqueness follows from the comparison principle.

\end{proof}

\begin{rmrk}\label{R:one} Unlike  the case $p=1$, it cannot  be asserted that  the  solutions are  constant  outside  large sets  in space-time since, even for the  case $ p =2$, the  bounded  solutions for a  compactly  supported initial datum do not obey this  property. The  auxiliary function  $\psi$ as  in \cite{ES1} (page 657) can be seen to be very specific  for the  case $  p=1$. In general   for any $p > 1$,  say $g$ is non-negative and compactly supported, then $u^{\ve}$  as  above are solutions of a uniformly parabolic partial differential equation in nondivergence form, with eigenvalues controlled from above and below by  $\max\{1,p-1\}$ and $\min\{1,p-1\}$, respectively. Thus, the Harnack inequality holds for $u^{\ve}$ (see \cite{KS}) with constant independent of $\ve$, and therefore in the limit the Harnack inequality is satisfied by $u$. This in particular rules out  finite extinction time and  finite  propagation speed when $p > 1$.
\end{rmrk}

\begin{rmrk}\label{R:two} We cannot conclude that  $u$  is smooth for a  smooth datum as the  bounds on higher derivatives  of  $u^{\ve}$ obtained from the parabolic theory depend on $\ve$.
\end{rmrk}


\subsection{The case of bounded domains}\label{SS:bded}


In what follows we consider a $C^2$ convex domain $\Omega\subset \Rn$, with mean curvature bounded from below by a  positive constant at each point on the boundary. This geometric assumption was introduced in \cite{SZ}. We intend to establish the following result.

\begin{thrm}\label{T:existencebdd}
Given $g\in C(\overline \Om)$. For any $p>1$ there exists a unique viscosity solution of the Cauchy-Dirichlet problem 
\begin{equation}\label{cd}
\begin{cases}
\emph{div}(|Du|^{p-2} Du) = |Du|^{p-2} u_t\ \ \ \ \ \text{in}\ \Om \times (0,\infty),
\\
u(x,t) = g(x),  \ (x,t) \in \partial_p(\Om\times(0,\infty)).
\end{cases}
\end{equation}
\end{thrm}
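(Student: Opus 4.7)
\medskip

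\noindent\textbf{Proof proposal.} The plan is to follow the same regularization scheme employed in the proof of Theorem \ref{T:existence} for the Cauchy problem, but now adapted to the bounded cylinder $\Omega\times(0,\infty)$, using the geometric hypothesis on $\partial\Omega$ to produce the crucial boundary gradient estimates. Uniqueness is immediate from Theorem \ref{T:max1} (or \ref{T:max2}) once existence is established, so the real content is existence.

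\medskip

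\noindent\emph{Step 1 (Regularization on $\Omega$).} First assume $g\in C^\infty(\overline\Omega)$ satisfies the obvious compatibility condition $g|_{\partial\Omega}$ being traced by a smooth function on $\partial\Omega$. For each $\ve>0$ consider the quasilinear, uniformly parabolic Cauchy--Dirichlet problem
\begin{equation*}
\begin{cases}
u^\ve_t=a^\ve_{ij}(Du^\ve)u^\ve_{ij} & \text{in } \Omega\times(0,\infty),\\
u^\ve=g & \text{on } \partial_p(\Omega\times(0,\infty)),
\end{cases}
\end{equation*}
with $a^\ve_{ij}$ as in \eqref{matrix} and \eqref{ue}. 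By the classical theory (Theorem 4.1, p.~558 in \cite{LU}), existence of a unique $H^{2+\alpha,1+\alpha/2}(\overline\Omega\times[0,T])$ solution holds on every finite time slab, and these patch to give a smooth global-in-time solution $u^\ve$.

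\medskip

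\noindent\emph{Step 2 (Uniform bounds).} The maximum principle gives $\|u^\ve\|_\infty\le\|g\|_\infty$, and differentiating in $t$ and again applying the maximum principle yields $\|u^\ve_t\|_\infty\le C\|D^2 g\|_\infty$ with $C=C(p)$, both independent of $\ve$. The central point is a uniform bound on $|Du^\ve|$. Interior gradient bounds follow from standard nonlinear parabolic theory (e.g.\ Theorem 3.4, p.~554 in \cite{LU}). The \textbf{main obstacle} is the boundary gradient estimate, which is exactly why the convexity of $\Omega$ together with the lower bound on the mean curvature of $\partial\Omega$ is invoked: it permits one to construct upper and lower barriers of the form $g(x_0)\pm \eta(d(x))$ near each boundary point $x_0\in\partial\Omega$, where $d$ is the distance to $\partial\Omega$ and $\eta$ is a concave modulus chosen so that $a^\ve_{ij}(D\eta)\eta_{ij}$ is dominated using the mean curvature lower bound (this is precisely the barrier argument of \cite{SZ}, p.~586, which for $p=1$ required the positivity of the mean curvature; the construction goes through with estimates independent of $\ve$ since \eqref{ue} is uniform in $\ve$). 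Together with the interior estimate, this gives $\|Du^\ve\|_{L^\infty(\Omega\times(0,\infty))}\le C(g,\Omega)$, independent of $\ve$. Once $Du^\ve$ is uniformly bounded, the equation is uniformly parabolic with uniformly bounded coefficients, so Theorem 1.1, p.~517 in \cite{LU}, provides a uniform H\"older estimate on $Du^\ve$, and the Schauder theory (e.g.\ Theorem 5.1, p.~320 in \cite{LU}) gives uniform $C^{2+\alpha,1+\alpha/2}$ bounds in compact subsets of $\overline\Omega\times(0,\infty)$.

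\medskip

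\noindent\emph{Step 3 (Passage to the limit and general $g$).} Using the Arzel\`a--Ascoli theorem and a diagonal argument, extract a subsequence $u^{\ve_k}\to u$ locally uniformly, with $u\in C(\overline\Omega\times[0,\infty))$, $u=g$ on the parabolic boundary, and $u$ Lipschitz in $x$ and $t$. Exactly as in the proof of Theorem 4.2 in \cite{ES1}, one verifies that $u$ is a viscosity solution of \eqref{p}: at a test-function touching point one uses the smoothness of $u^\ve$ together with the locally uniform convergence of $Du^\ve$ and $D^2 u^\ve$ (in the region where $D\phi\neq 0$), and on the degeneracy set $D\phi=0$ one selects $a$ as a limit of the normalized gradients $Du^{\ve_k}/(|Du^{\ve_k}|^2+\ve_k^2)^{1/2}$, passing to the limit in $a^{\ve_k}_{ij}(Du^{\ve_k})$ exactly as was done in the proof of Lemma \ref{l:pevol}. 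Finally, for a merely continuous boundary datum $g\in C(\overline\Omega)$, approximate $g$ uniformly by smooth data $g_k$; the corresponding solutions $u_k$ form a Cauchy sequence in $L^\infty(\overline\Omega\times[0,\infty))$ by the comparison principle Theorem \ref{T:max1}, and the uniform limit is again a viscosity solution attaining the desired boundary values.

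\medskip

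\noindent\emph{Step 4 (Uniqueness).} If $u,v$ are two viscosity solutions with the same parabolic boundary data, Theorem \ref{T:max1} applied to $u-v$ and $v-u$ yields $u\equiv v$ on $\overline\Omega\times[0,\infty)$, completing the proof.
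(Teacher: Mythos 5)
Your proposal follows essentially the same route as the paper: regularize with $a^\ve_{ij}$, solve the resulting uniformly parabolic Cauchy--Dirichlet problem via the classical theory of \cite{LU}, obtain $\ve$-independent Lipschitz bounds (with the boundary gradient estimate supplied by a barrier built from the distance function, exploiting the mean curvature lower bound exactly as in \cite{SZ}), pass to the limit to get a viscosity solution, approximate continuous $g$ by smooth $g_k$ using Theorem \ref{T:max1}, and conclude uniqueness from the comparison principle. The only cosmetic differences are the citation (the paper points to Theorem 4.2, p.~559 in \cite{LU} for the bounded-domain case) and that the paper phrases the time-derivative bound as a barrier $\alpha t$ rather than differentiating in $t$, but both are standard and equivalent here.
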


\begin{proof}
With $a_{ij}(\sigma)$ as in \eqref{matrix}, our objective is finding a viscosity solution of  
\begin{equation}\label{e:bdd}
\begin{cases}
u_t =  a_{ij}(Du)u_{ij}\ \ \ \ \ \text{in}\ \Om \times (0,\infty),
\\
u(x,0) =   g(x),\ \ \ \ \   \ x\in \Om,  \ \ u(x, t) = g(x),  \ (x,t) \in \partial \Om\times(0,\infty).
\end{cases}
\end{equation}
As  in \cite{SZ}, $u$  can be obtained as the  limit of  the  solutions of  the following approximating problems 
\begin{equation}\label{e:1aprox0}
\begin{cases}
u_t =  a^{\ve}_{ij}(Du)u_{ij}\ \ \ \ \ \text{in}\ \Om \times (0,\infty)
\\
u(x,0) =   g(x),\ \ \ \ \   \ x\in \Om,  \ \ u(x, t) = g(x),  \ (x,t) \in \partial \Om\times(0,\infty),
\end{cases}
\end{equation}
where we  first assume $g$ be smooth.
For a  given $\ve$, the existence  of a classical solution $u^{\ve}$ which is H\"older continuous in $\overline{\Omega} \times [0, \infty)$  is guaranteed by \cite{LU} (Theorem 4.2  page 559). We note in passing that since, as before, we are dealing with the case $p > 1$, we do not need the additional regularization by $\sigma \delta_{ij}$ which is necessary in the case $p=1$.
As in \cite{SZ}, the uniform Lipschitz bounds are obtained by means of a  barrier method. In view of our assumption that the mean curvature of $\partial \Om$ be bounded from below by a positive constant, in order to bound the spatial  difference quotients a barrier function of the form  $\lambda d( x, \partial \Omega)$ will do. Similarly, the time difference quotients can be  bounded by using a barrier of the form $\alpha t$, where  $\alpha$ depends only on the $C^{2}$ norm of $g$. Once we have  uniform Lipschitz bounds, we can pass to the limit, and obtain a Lipschitz  viscosity solution $u$ of \eqref{p} in $\Omega  \times [0, \infty)$ when the Cauchy-Dirichlet datum $g$ is smooth. In the case of a continuous datum, we can approximate $g$ uniformly  by smooth functions $g_{k}$ and denote  by $u_{k}$ the corresponding solutions. By applying Theorem \ref{T:max1}, we can conclude that $ u_{k} \to u$ uniformly, with $u$ taking up the boundary value $g$. Since  $u$ is a  uniform limit of $u_{k}$, also  $u$ solves  \eqref{me}. Thus, the Cauchy-Dirichlet  problem corresponding to  equation \eqref{me} can be uniquely solved for continuous data when $\Omega$ satisfies the above condition.

\end{proof}

\begin{rmrk}\label{R:limit}
We mention explicitly that the proof of Theorem \ref{T:existencebdd} shows that, when the initial datum $g$ is sufficiently smooth ($C^2$), then the unique solutions $u^\ve$ of the regularized problems \eqref{e:1aprox0} converge as $\ve\to 0$ to the unique viscosity solution $u$ of the Cauchy-Dirichlet problem \eqref{cd}, uniformly on compact subsets of $\Om\times [0,\infty)$, i.e., 
\[
\underset{\ve\to 0}{\lim}\ u^\ve(x,t) = u(x,t).
\] 
This fact will be used in the proof of Corollary \ref{C:commute}. 
\end{rmrk}

\section{Convergence to flow by mean curvature as  $p \to1$}\label{S:conv}

In this section we study what happens to the viscosity solutions $u_p$ of \eqref{me}, in the limit as  $p\to 1$. The next result states that such $u_p$'s converge to the unique solution of the mean curvature flow with same initial datum.

\begin{thrm}\label{T:6} Given $g\in C^2(\Rn)$, and  constant outside a compact set, for a given $p>1$, let $ u_{p} $ be a  viscosity solution  of \eqref{me} as in Theorem \ref{T:existence}. Then,  $\underset{p\to 1}{\lim}\ u_{p} = u_{0}$, where $u_{0}$ is the unique solution of the generalized mean curvature flow equation \eqref{mmc}. The convergence being uniform on every compact subset in $\Rn \times [0, \infty)$.
\end{thrm}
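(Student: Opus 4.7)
My strategy is standard for vanishing-parameter stability: uniform Lipschitz estimates on the family $\{u_p\}$, Arzel\`a--Ascoli compactness, stability of viscosity solutions under this degenerate limit, and uniqueness for the mean curvature flow equation \eqref{mmc}.

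First, I revisit the construction in Section \ref{S:existence}. For $p\in(1,2]$ the uniform ellipticity constants $\min\{1,p-1\}$ and $\max\{1,p-1\}$ in \eqref{ue} are controlled independently of $p$, so the regularized solutions $u^\ve_p$ of \eqref{e:1aprox} satisfy the $(\ve,p)$-independent bounds
\[
\|u^\ve_p\|_\infty \le \|g\|_\infty,\qquad \|Du^\ve_p\|_\infty \le \|Dg\|_\infty,\qquad \|(u^\ve_p)_t\|_\infty \le C\|D^2 g\|_\infty,
\]
the gradient bound coming from the maximum principle applied to the differentiated equation, and the time-derivative bound from evaluating $a^\ve_{ij}(Dg)D_{ij}g$ at $t=0$. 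Passing first to the limit $\ve\to 0$ of Theorem \ref{T:existence}, these estimates are inherited by $u_p$. Thus the family $\{u_p\}_{p\in(1,2]}$ is uniformly bounded and uniformly Lipschitz in $(x,t)$, so by Arzel\`a--Ascoli there exists a sequence $p_k\downarrow 1$ and a bounded Lipschitz function $u_*$ on $\Rn\times[0,\infty)$ with $u_*(\cdot,0)=g$, such that $u_{p_k}\to u_*$ locally uniformly.

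Second, and this is the main step, I verify that $u_*$ is a viscosity solution of \eqref{mmc}. Let $\phi\in C^2$ and suppose $u_*-\phi$ has a local maximum at $z_0$; by subtracting $|x-x_0|^4+(t-t_0)^2$ from $\phi$ we may assume the maximum is strict. Local uniform convergence then yields $z_k\to z_0$ at which $u_{p_k}-\phi$ attains a local maximum. The viscosity subsolution condition for $u_{p_k}$ at $z_k$ produces a vector $\xi_k\in\Rn$ with $|\xi_k|\le 1$ such that
\[
\phi_t(z_k)\ \le\ \bigl(\delta_{ij}+(p_k-2)\,\xi_{k,i}\xi_{k,j}\bigr)\phi_{ij}(z_k),
\]
where $\xi_k=D\phi(z_k)/|D\phi(z_k)|$ whenever $D\phi(z_k)\ne 0$. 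Two cases arise. If $D\phi(z_0)\ne 0$, then for large $k$ one has $\xi_k\to D\phi(z_0)/|D\phi(z_0)|$ and the inequality passes directly to the limit, giving the $D\phi(z_0)\ne 0$ condition of Definition \ref{D:vsub} for \eqref{mmc} (formally, $p-2=-1$). If $D\phi(z_0)=0$, compactness of $\overline{B_1(0)}$ gives a subsequence $\xi_k\to a$ with $|a|\le 1$, and the limit inequality
$\phi_t(z_0)\le(\delta_{ij}-a_ia_j)\phi_{ij}(z_0)$ is exactly the degenerate condition in Definition \ref{D:vsub} applied to the case $p=1$. The supersolution case is identical, and so $u_*$ is a viscosity solution of \eqref{mmc} in the sense of Evans--Spruck.

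Third, the hypothesis on $g$ (smooth, constant outside a compact set) places $g$ in the uniqueness class of Theorem \ref{T:max3} applied to $p=1$ (or equivalently the Chen--Giga--Goto uniqueness theorem for \eqref{mmc}, which by the analogue of Proposition \ref{P:equivalence} for $p=1$ proved in Proposition 2.2 of \cite{BG} coincides with our notion). Hence $u_*=u_0$. Since the subsequential limit is independent of the extracting sequence, the entire family $u_p$ converges locally uniformly to $u_0$ as $p\to 1$, which is the desired conclusion.

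The only delicate point is the passage to the limit in the degenerate viscosity condition, specifically keeping track of the witness vectors $\xi_k$ when $D\phi(z_0)=0$; but this is resolved by the compactness of $\overline{B_1(0)}\subset\Rn$. All other steps are quantitative consequences of the $p$-uniform bounds already extracted from the regularization scheme in Section \ref{S:existence}.
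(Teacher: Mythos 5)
Your proposal follows essentially the same route as the paper's own proof: you derive uniform Lipschitz bounds from the regularization (noting they are uniform in $p$ once one restricts to $p\in(1,2]$, which is legitimate since we are taking $p\to 1$), extract a locally uniform subsequential limit by Arzel\`a--Ascoli, verify the viscosity sub/supersolution inequalities for \eqref{mmc} by passing to the limit in the test-function inequalities (handling the $D\phi(z_0)=0$ case via the compactness of the witness directions $\xi_k$ in the closed unit ball), and then invoke uniqueness for \eqref{mmc} to upgrade subsequential convergence to convergence of the full family. This matches the paper's argument step for step.
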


\begin{proof}
Take any sequence  $p_{k} \to 1$. Without loss of generality we may assume that  $p_{k} < 2$ for all $k$. Now, following \cite{ES1}, for  a given $ p_{k}$, let $ u^{\ve}_{p_{k}}$ be as above. From Section \ref{S:existence} we have 
\begin{equation}
\begin{cases}
||u^{\ve}_{p_{k}}||_{\infty} \le ||g||_{L^\infty(\Rn)},\ \ \ ||Du^{\ve}_{p_{k}}||_{\infty} \leq ||Dg||_{L^\infty(\Rn)},
\\
 ||(u^{\ve}_{p_{k}})_{t}||_{\infty}  \leq  C || D^{2} g||_{L^\infty(\Rn)},
\end{cases}
\end{equation}
where $C$ above depends on $p-1$, and thus it is uniformly bounded  for  $p < 2$. From the above estimates,  we obtain a uniform bound on the Lipschitz (in space and time) norm of $u_{p_{k}}$ for all $p_{k}$.
Therefore, we can  extract a  subsequence such that $u_{p_{m}} \to u_{0}$ locally uniformly. The function $u_0$ inherits the same uniform Lipschitz bound of the sequence. Let $\phi \in C^{2}(R^{n+1})$, and  suppose  $u_{0} -\phi$ has a  strict  local maximum at  $(x_{0},t_{0})$. Because of uniform convergence (standard  arguments as in Section \ref{S:def} above), we deduce that $u_{p_{m}} - \phi$ has a  local maximum  at  $(x_{m},t_{m})  \to (x_{0},t_{0})$.
Suppose first  $D\phi(x_0,t_0)  \neq  0$,  then $ D\phi(x_{m}, t_{m})  \neq  0$ for  large enough $m$. Then,
$\phi_{t} \leq (\delta_{ij} +  \frac{( p_{m} - 2) \phi_{i}\phi_{j}}{ |D\phi|^2})\phi_{ij}$ at $(x_{m}, t_{m})$.
Taking  the limit as $m \to \infty$, we obtain  
$\phi_{t} \leq ( \delta_{ij} -   \frac{  \phi_{i}\phi_{j}}{ |D\phi|^2})\phi_{ij}$  at   $ (x_{0},t_{0})$.
Next, if $D\phi(x_0,t_0) = 0$, then for each $m$ large enough, we can define the sequence {$a^{m}\in \Rn$} as follows
\[
a^{m} = \frac{D \phi(x_m,t_m)}{|D\phi(x_m,t_m)|},\ \ \   \text{if}\  D \phi (x_{m}, t_{m}) \neq 0.
\]  
Otherwise, when $D\phi(x_{m},t_{m}) = 0$, we know from Definition \ref{D:vsol}  that there exists $|a^{m}|\le 1$ such that
$\phi_{t} \leq ( \delta_{ij} +   ( p_{m} - 2) a^{m}_{i}a^{m}_{j})\phi_{ij}$.  
After  passing to a  subsequence, if necessary, we may assume that  $a^{m} \to a$, and  passing to the limit we obtain
$\phi_{t}  \leq  (\delta_{ij}  - a_{i}a_{j}) \phi_{ij}$  at   $(x_{0},t_{0})$.
Now,  if  we  have  local maximum and not  strict  local maximum, we  can take
$\phi_{1}(x,t) = \phi(x,t) + |x - x_{0}|^{4}  + (t-t_{0})^{4}$ and  repeat the arguments above with  $ \phi_{1}$.
Thus  $u_{0}$ is a  subsolution of \eqref{mmc}. Similarly, one shows that $u_0$ is a  supersolution. Uniqueness follows from the comparison principle. Thus, in particular, $u_{0}$ is constant outside a compact set since it coincides with the solution in \cite{ES1}. Now given any compact set $K\subset \Rn \times [0, \infty)$, for every sequence $p_{k} \to 1$, we have a subsequence $p_m$ for which there is uniform convergence of $u_{p_{m}} \to u_{0}$, and this thus implies  convergence of the whole sequence. This completes the proof.

\end{proof}

\begin{rmrk}  When $\Omega$ is a bounded domain which satisfies the geometric assumption for Theorem \ref{T:existencebdd}, as indicated above for  smooth enough Cauchy-Dirichlet datum we have uniform Lipschitz estimates. As a consequence, similarly to what was done above we conclude that a statement such as Theorem \ref{T:6} holds, i.e., $\underset{p \to 1 }{\lim}\ u_{p}=u_{0}$, where $u_{0}$ is the solution as in \cite{SZ}.
\end{rmrk}

\section{ Large-time  behavior}\label{S:ltb}

Let  $\Omega\subset \Rn$ be a smooth, convex bounded domain satisfying the hypothesis in Theorem \ref{T:existencebdd}. In this section we  only consider sufficiently smooth Cauchy-Dirichlet data, and study the large-time behavior of the corresponding solutions.

\begin{prop}\label{P:mon}
Let $u^{\ve}$ be a  solution of  the approximating problems \eqref{e:1aprox0} in $\Omega \times [0, \infty)$ corresponding to sufficiently smooth data on the parabolic boundary. Then,
\[
t \to  \int_{\Omega} ( \ve^{2} + | Du^{\ve}(x, t)|^{2})^{p/2}  dx
\]
is non-increasing.
\end{prop}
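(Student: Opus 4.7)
The plan is to exploit the fact that, although \eqref{e:1aprox0} is written in non-divergence form, it is equivalent to a divergence-form equation exposing the gradient-flow structure of the energy in question. Since the $u^\ve$ constructed in Section \ref{S:existence} are classically smooth on $\overline{\Omega} \times [0,\infty)$, all manipulations below are justified by dominated convergence and standard integration by parts.

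First, I would verify by a direct computation, expanding the divergence, that
\[
\text{div}\bigl((\ve^2 + |Du^\ve|^2)^{(p-2)/2} Du^\ve\bigr) = (\ve^2 + |Du^\ve|^2)^{(p-2)/2}\, a^\ve_{ij}(Du^\ve)\, u^\ve_{ij}.
\]
Combining this identity with the equation $u^\ve_t = a^\ve_{ij}(Du^\ve) u^\ve_{ij}$ in \eqref{e:1aprox0} yields the divergence-form rewriting
\[
(\ve^2 + |Du^\ve|^2)^{(p-2)/2}\, u^\ve_t = \text{div}\bigl((\ve^2 + |Du^\ve|^2)^{(p-2)/2} Du^\ve\bigr).
\]

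Next, setting $F(t) = \int_\Omega (\ve^2+|Du^\ve|^2)^{p/2}\, dx$ and differentiating under the integral, I would obtain
\[
F'(t) = p \int_\Omega (\ve^2 + |Du^\ve|^2)^{(p-2)/2} \langle Du^\ve, Du^\ve_t\rangle\, dx.
\]
Since the Cauchy-Dirichlet datum $g$ is time-independent, $u^\ve_t \equiv 0$ on $\partial\Omega \times (0,\infty)$, so integration by parts in space produces no boundary contribution:
\[
F'(t) = -p \int_\Omega \text{div}\bigl((\ve^2 + |Du^\ve|^2)^{(p-2)/2} Du^\ve\bigr)\, u^\ve_t\, dx.
\]
Substituting the divergence-form equation from the first step then yields
\[
F'(t) = -p \int_\Omega (\ve^2 + |Du^\ve|^2)^{(p-2)/2} (u^\ve_t)^2\, dx \leq 0,
\]
since $(\ve^2+|Du^\ve|^2)^{(p-2)/2} > 0$ pointwise, regardless of whether $p \geq 2$ or $1 < p < 2$.

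This is the standard computation revealing \eqref{me} as a (weighted) gradient flow of the $p$-energy, and I do not anticipate any genuine obstacle beyond the routine verification of the divergence identity in the first step. The only point to attend to carefully is the vanishing of the boundary term under integration by parts, which is ensured by the time-independence of the Dirichlet datum on $\partial \Omega$; the regularity needed to carry out the differentiation under the integral and the integration by parts is provided by the classical smoothness of the $u^\ve$ established in Section \ref{S:existence}, together with the boundedness of $\Omega$.
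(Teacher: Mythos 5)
Your proposal is correct and follows essentially the same route as the paper's proof: differentiate the energy under the integral, integrate by parts in space (with the boundary term vanishing because the Dirichlet datum is time-independent), and substitute the divergence-form rewriting of the regularized equation to get $F'(t) = -p\int_\Omega (\ve^2+|Du^\ve|^2)^{(p-2)/2}(u^\ve_t)^2\,dx \le 0$. The only cosmetic difference is that you state the divergence identity explicitly as a preliminary step, whereas the paper uses it implicitly; the substance is the same.
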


\begin{proof}
We easily find
\begin{align*}
\frac{d}{dt} \int_{\Omega } (\ve^{2} + |Du^{\ve}|^{2} )^{p/2} &  =  p \int_{\Omega } (\ve^{2} + |Du^{\ve}|^{2})^{p/2 - 1}<Du^{\ve},(Du^{\ve})_{t}>
\\
& =  -p \int_{\Omega} u^{\ve}_{t}\ \text{div}((\ve^{2} + Du^{\ve})^{p/2 - 1 } Du^{\ve}) 
\\
&  =  -p \int_{\Omega } (\ve^{2} + |Du^{\ve}|^{2} )^{p/2 -1 } (u^{\ve}_t)^{2}  \leq  0. 
\end{align*}
(the boundary  integral  vanishes  because  the boundary datum is independent of time. Here, we have made use of the  fact that, away from  $t = 0$, we have continuity of derivatives up to the boundary, a fact which follows from the classical theory, see \cite{LU}).

\end{proof}

We next prove the following result.

\begin{prop}[Energy monotonicity]\label{P:enemon}
Let $1\le p<\infty$. Then, the function $t \to \int_{\Omega}|Du(x,t)|^{p} dx$ is non-increasing.
\end{prop}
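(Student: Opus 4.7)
My plan is to pass to the limit $\ve \to 0$ in the monotonicity of Proposition \ref{P:mon}, first establishing the clean case $s=0$ and then reducing the general case $0\le s<t$ to it via the semigroup property of viscosity solutions. The common ingredient is weak lower semicontinuity of the $p$-energy, which compensates for the absence of any \emph{a priori} strong convergence of the gradients $Du^{\ve}\to Du$.

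For the case $s=0$, set $F^{\ve}(t):=\int_{\Omega}(\ve^{2}+|Du^{\ve}(x,t)|^{2})^{p/2}\,dx$. Proposition \ref{P:mon} gives $F^{\ve}(t)\le F^{\ve}(0)$, and since $g\in C^{2}(\overline\Omega)$, dominated convergence yields $F^{\ve}(0)\to \int_{\Omega}|Dg|^{p}\,dx$. The uniform Lipschitz bound established in Section \ref{S:existence} makes $\{Du^{\ve}(\cdot,t)\}_{\ve>0}$ bounded in $L^{\infty}(\Omega)$; combined with the uniform convergence $u^{\ve}\to u$, integration by parts against smooth test fields upgrades this to weak convergence $Du^{\ve}(\cdot,t)\rightharpoonup Du(\cdot,t)$ in every $L^{q}(\Omega)$. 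Lower semicontinuity of the convex functional $v\mapsto \int_{\Omega}|v|^{p}\,dx$ then yields
\[
\int_{\Omega}|Du(\cdot,t)|^{p}\,dx \le \liminf_{\ve\to 0}\int_{\Omega}|Du^{\ve}(\cdot,t)|^{p}\,dx \le \liminf_{\ve\to 0}F^{\ve}(t) \le \int_{\Omega}|Dg|^{p}\,dx.
\]

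For a general time $s>0$, uniqueness (Theorem \ref{T:max2}) tells us that $\tau\mapsto u(\cdot,s+\tau)$ is the unique viscosity solution of \eqref{me} with Cauchy datum $u(\cdot,s)$ and boundary datum $g$. Since $u(\cdot,s)$ is Lipschitz on $\overline\Omega$ and equals $g$ on $\partial\Omega$, writing $u(\cdot,s)=g+w$ with $w\in W^{1,\infty}_{0}(\Omega)$, I would use Friedrichs mollification (after an interior cutoff) to produce $g_{k}\in C^{2}(\overline\Omega)$ with $g_{k}=g$ on $\partial\Omega$, $g_{k}\to u(\cdot,s)$ uniformly, and $g_{k}\to u(\cdot,s)$ in $W^{1,p}(\Omega)$. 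Let $v_{k}$ be the viscosity solution from Theorem \ref{T:existencebdd} with initial datum $g_{k}$ and boundary datum $g$; by Theorem \ref{T:max1}, $v_{k}\to u(\cdot,\cdot+s)$ uniformly. Applying the already-proved case $s=0$ to each $v_{k}$ supplies the uniform bound $\int_{\Omega}|Dv_{k}(\cdot,\tau)|^{p}\,dx\le \int_{\Omega}|Dg_{k}|^{p}\,dx$. This $L^{p}$ bound, combined with the uniform convergence of $v_{k}$, forces $Dv_{k}(\cdot,\tau)\rightharpoonup Du(\cdot,s+\tau)$ weakly in $L^{p}$, and a final application of lower semicontinuity with $\tau=t-s$ closes the argument.

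The main technical obstacle lies precisely in the weak $L^{p}$ convergence of gradients used in the approximation step: the uniform Lipschitz bound of Section \ref{S:existence} depends on $\|D^{2}g_{k}\|_{L^{\infty}}$ through the time-barrier $\alpha t$, hence cannot be preserved under a mere $W^{1,p}$ approximation of a Lipschitz datum. One must therefore extract the weak convergence from the uniform $L^{p}$ bound supplied by the $s=0$ case applied to $v_{k}$ itself; this is what makes the two-step, bootstrapped structure genuinely necessary rather than a mere convenience. The remaining construction — a boundary-respecting smooth approximation of a Lipschitz function whose trace agrees with $g$ — is standard, but must be written out with care.
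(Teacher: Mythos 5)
Your proposal is correct and follows essentially the same route as the paper: the $s=0$ case comes from passing to the limit in Proposition \ref{P:mon} via weak lower semicontinuity, and the general case by mollifying $u(\cdot,s)$, applying the $s=0$ estimate to the approximating solutions to supply the uniform $L^p$ gradient bound, and then combining with the uniform convergence from Theorem \ref{T:max1} to get weak $W^{1,p}$ convergence --- which is exactly the bootstrap the paper uses, and you correctly identify why it is unavoidable. The only cosmetic difference is your interior-cutoff construction forcing the mollified data to match $g$ exactly on $\partial\Omega$, whereas the paper mollifies $u(\cdot,t_1)$ extended by $g$ outside $\Omega$ and simply lets the boundary data of the approximating solutions converge to $g$, thereby sidestepping the (workable but delicate) cutoff estimates you flag.
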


\begin{proof}
The proof for the case $p=1$ is in \cite{SZ}, thus we assume $p>1$.
Using Proposition \ref{P:mon}, the fact that $Du^{\ve}(x,t_{n}) \to  Dg$  as $ t_{n} \to 0$  for all $ x  \in  \Omega$,  and Lebesgue dominated convergence theorem, we conclude for any $t > 0$   
\[
\int_{\Omega } (\ve^{2} + | Du^{\ve}(x,t)|^{2})^{p/2} dx  \leq  \int_{\Omega} ( \ve^{2}  + |Dg(x)|^{2})^{p/2} dx.
\]
Since $u^{\ve}(\cdot, t) \to u(\cdot, t)$ weakly in $W^{1,p}(\Om)$, using lower semicontinuity and letting $ \ve \to  0$, we obtain
\begin{equation}\label{de}
\int_{\Omega} |Du(x,t)|^{p} dx \leq  \int_{\Omega} |Dg(x)|^{p} dx.
\end{equation}
For any given times $t_{1} \leq t_{2}$, we first  extend $u(\cdot,t_{1}) $ outside  $\Om$  by $ g$. Now,  let $ u_{k} = u(\cdot, t_{1}) \star \rho_{\ve_{k}}$(mollification of $u(\cdot, t_{1}$). Let $v_{k}$ be the solution of the Cauchy-Dirichlet problem in $\Omega \times [t_{1},\infty)$ with Cauchy -Dirichlet  datum $u_{k}$. So for each $k$, we obtain from \eqref{de}
\begin{equation}\label{e:est}
\int_{\Omega} |Dv_{k}(x,t_{2})|^{p} dx \leq  \int_{\Omega} |Du_{k}(x)|^{p} dx.
\end{equation}
 Since by the results in Section \ref{S:existence} we have  
 \[
 |Du_{k}|= |Du(\cdot,t_{1}) \star \rho_{\ve_k}| \leq |Du(\cdot, t_{1})| \leq C(g),  
 \]
from \eqref{e:est} we have  that  $Dv_{k}(\cdot, t_{2})$ are  uniformly bounded in $L^{p}(\Om)$.  Also, since  $u_{k} \to u(\cdot,t_{1})$ uniformly, and $Du_{k} \to Du(\cdot, t_{1})$ in $L^{p}(\Om)$, by Theorem \ref{T:max1} and (\ref{e:est}) which gives  uniform $L^{p}$ bounds we conclude that  $v_{k}(\cdot, t_{2}) \to  u(\cdot, t_{2}) $ uniformly and weakly in $W^{1,p}(\Om)$. Consequently, by using lower-semicontinuity in the left-hand side, and by taking limit in the right-hand side of \eqref{e:est}, we conclude that 
\[
\int_{\Omega} |Du(x, t_{2})|^{p} dx  \leq \int_{\Omega} |Du(x, t_{1})|^{p} dx \ \ \ \text{for all}\   t_{1} \leq  t_{2}.
\]
This gives the desired conclusion.

\end{proof}

The next result provides some interesting information on the large-time behavior of the functions  $u^{\ve}$ in Proposition \ref{P:mon}. 

\begin{thrm}\label{T:ltb}
There exists a Lipschitz continuous function $v^{\ve}\in C^{\infty}(\Omega)$ such that: 
\begin{itemize}
\item[1)] $\underset{t \to \infty}{\lim}\ u^{\ve}(x,t)=  v^{\ve}(x)$,  uniformly in  $\overline{ \Omega}$;
\item[2)] $|Dv^{\ve}(x)| \leq  C$ for every $x\in \Om$;
\item[3)] \emph{div}$((\ve^{2} + |Dv^{\ve}|^{2})^{p/2 - 1} Dv^{\ve})  =  0$;
\item[4)]  $v^{\ve} =  g$ on $\partial \Omega$.
\end{itemize}
\end{thrm}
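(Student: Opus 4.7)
The plan is to use the monotonicity identity from the proof of Proposition \ref{P:mon},
\[
\frac{d}{dt}\int_\Omega (\ve^2+|Du^\ve|^2)^{p/2}\,dx = -p\int_\Omega (\ve^2+|Du^\ve|^2)^{p/2-1}(u^\ve_t)^2\,dx,
\]
as the starting engine. Integrating from $0$ to $\infty$, and using the uniform-in-time Lipschitz estimate $|Du^\ve|\le C(\ve,g)$ established in Section \ref{S:existence}, the weight $(\ve^2+|Du^\ve|^2)^{p/2-1}$ stays pinched between two strictly positive constants (in both ranges $1<p<2$ and $p\ge 2$). Hence $u^\ve_t\in L^2(\Omega\times(0,\infty))$, which produces a sequence $t_k\to\infty$ along which $\|u^\ve_t(\cdot,t_k)\|_{L^2(\Omega)}\to 0$.

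Next I would extract a convergent subsequence. Because $\{u^\ve(\cdot,t)\}_{t\ge 0}$ is uniformly bounded and uniformly Lipschitz in $\overline\Omega$, Arzel\`a-Ascoli yields a further subsequence (still denoted $t_k$) with $u^\ve(\cdot,t_k)\to v^\ve$ uniformly in $\overline\Omega$; the limit is Lipschitz with the same constant, and since $u^\ve(\cdot,t_k)=g$ on $\partial\Omega$ one gets $v^\ve=g$ on $\partial\Omega$, verifying items (2) and (4). To identify the PDE satisfied by $v^\ve$ I would recast the approximate equation in divergence form as
\[
(\ve^2+|Du^\ve|^2)^{(p-2)/2}\,u^\ve_t=\mathrm{div}\bigl((\ve^2+|Du^\ve|^2)^{(p-2)/2}Du^\ve\bigr),
\]
test against $\phi\in C_c^\infty(\Omega)$, and pass to the limit along $t_k$. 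For the nonlinear right-hand side one needs strong $L^2_{\mathrm{loc}}$ convergence of the spatial gradients; this is supplied by uniform-in-$t$ interior parabolic Schauder estimates, since for fixed $\ve>0$ the equation $u^\ve_t=a^\ve_{ij}(Du^\ve)u^\ve_{ij}$ is uniformly parabolic with smooth coefficient dependence, and differentiation produces a linear uniformly parabolic equation for $Du^\ve$ yielding uniform $C^{k,\alpha}$-bounds on compact subsets of $\Omega$ independently of $t$. In the limit one obtains
\[
\int_\Omega (\ve^2+|Dv^\ve|^2)^{(p-2)/2}Dv^\ve\cdot D\phi\,dx=0 \quad \text{for all } \phi\in C_c^\infty(\Omega),
\]
so $v^\ve$ is a weak solution of the elliptic equation in (3), and classical elliptic regularity promotes it to $v^\ve\in C^\infty(\Omega)$.

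The final step upgrades subsequential convergence to the full limit $t\to\infty$. The observation here is that the elliptic Dirichlet problem in (3)-(4) is the Euler-Lagrange equation of the strictly convex functional $w\mapsto \int_\Omega(\ve^2+|Dw|^2)^{p/2}\,dx$ over $W^{1,p}(\Omega)$-functions with trace $g$, and hence admits a unique solution. Consequently every subsequential uniform limit of $u^\ve(\cdot,t)$ as $t\to\infty$ must coincide with this unique $v^\ve$, forcing the full uniform convergence in (1). The main technical obstacle, I expect, lies not in the monotonicity or the Arzel\`a-Ascoli step but in the passage to the limit inside the nonlinear divergence term: the uniform Lipschitz bound alone only yields weak convergence of $Du^\ve(\cdot,t_k)$, and upgrading it to the strong convergence required by the nonlinearity is what makes essential use of the uniform parabolic regularity theory for the regularized equation at fixed $\ve>0$.
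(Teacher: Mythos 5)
Your route through items (2)--(4) is sound and genuinely different from the paper's. Where the paper integrates the equation in time over $[t_k,t_{k+1}]$, tests against $\phi$, and invokes the mean value theorem to single out times $T_k$ at which the elliptic residual is $O(\delta)$, then treats the slices $u(\cdot,T_k)$ as solutions of an \emph{elliptic} divergence-form problem with $L^2$ right-hand side and extracts $W^{2,2}_{\mathrm{loc}}$ bounds, you instead observe directly that $u^\ve_t\in L^2(\Om\times(0,\infty))$ (which follows from integrating the identity in Proposition \ref{P:mon} and the $\ve$-dependent nondegeneracy of the weight), choose $t_k$ with $\|u^\ve_t(\cdot,t_k)\|_{L^2}\to 0$, and obtain strong gradient convergence from interior \emph{parabolic} regularity for the fixed-$\ve$ uniformly parabolic equation. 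This is cleaner; the one thing to be careful about is that the constants in the interior parabolic Schauder bootstrap should be taken uniform in $t$, which holds for $t\ge 1$ by time-translation invariance, and that suffices here. The elliptic-to-$C^\infty$ bootstrap for $v^\ve$ and the boundary identification $v^\ve=g$ are handled exactly as in the paper.

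The gap is in your final step, the upgrade from subsequential convergence to convergence of the full family. You argue: the Dirichlet problem in (3)--(4) has a unique solution (strict convexity of the energy), hence ``every subsequential uniform limit of $u^\ve(\cdot,t)$ must coincide with $v^\ve$.'' But uniqueness of the elliptic problem only identifies those subsequential limits that you have \emph{shown} to solve the elliptic PDE, and your PDE identification used the specially chosen times $t_k$ along which $\|u^\ve_t(\cdot,t_k)\|_{L^2}\to 0$. For an arbitrary sequence $T_i\to\infty$ you do not know that $\|u^\ve_t(\cdot,T_i)\|_{L^2}\to 0$, so you cannot pass to the limit in the weak formulation along $T_i$ and cannot conclude the subsequential limit is a solution at all. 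To close this, you need the ingredient the paper uses: the time-monotonicity of the energy $E_\ve(t)=\int_\Om(\ve^2+|Du^\ve(\cdot,t)|^2)^{p/2}\,dx$ from Proposition \ref{P:mon}. Along your good sequence $t_k$ you have strong gradient convergence, hence $E_\ve(t_k)\to E_\ve(v^\ve)$, which is the minimum of the (strictly convex) energy over admissible competitors with trace $g$; monotonicity then forces $E_\ve(t)\searrow E_\ve(v^\ve)$ for the full limit $t\to\infty$. Now for an arbitrary $T_i\to\infty$ with uniform limit $w$ and $w\rightharpoonup$ weak $W^{1,p}$ convergence, weak lower semicontinuity gives $E_\ve(w)\le\lim E_\ve(T_i)=E_\ve(v^\ve)$, while admissibility of $w$ gives the reverse inequality; strict convexity then yields $w=v^\ve$. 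Without the monotonicity-plus-lower-semicontinuity step, the uniqueness of the elliptic solution is not enough.
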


\begin{proof}
In the following discussion the superscript $\ve$ will be omitted throughout. Following \cite{SZ}, by applying  the uniform Lipschitz  bounds, the theorem of Ascoli-Arzel\`a guarantees the existence of a  sequence ${t_k } \to \infty$, and of a Lipschitz continuous  function $v(x)$ to which $u(x, t_{k})$ converges uniformly. Now, choose a test function $\phi\in$ $C^{\infty}_{0}(\Omega)$. Then, using the fact that $u (=u^{\ve})$ is a  classical solution, integrating by parts we obtain
\begin{equation}\label{e:main}
\ \int_{t_{k}}^{t_{k+1}} \int_{\Omega}  u_{t} \phi  =  - \int_{t_{k}}^{t_{k+1}} \int_{\Omega} <S Du,D(S^{-1} \phi)> dx, 
\end{equation}
where we have let $S  = (\ve^2  + | Du|^2)^{p/2-1}$.  
By using the  mean  value theorem, we deduce that there exists $T_{k}  \in (t_{k}, t_{k+1})$ such that the absolute value of the right-hand side of \eqref{e:main} is
\[
(t_{k+1} - t_{k})\  \left|\int_{\Omega} \left[<Du(x,T_{k}),D\phi> - S^{-1}(x ,T_{k}) <Du(x,T_{k}),DS(x, T_{k})>\phi\right] dx \right| .
\]
By passing  to  a  subsequence if necessary, we may assume  that   $t_{k+1} - t_{k} \geq 1$.
We define a sequence of  functions  by letting ${u_{k}} = u(\cdot,T_{k} )$. Then, each $u_{k}$  satisfies the following divergence form equation in  $\Omega$
\[
\text{div}(( \ve^{2} + |Du_{k}|^2) ^{p/2 - 1 } Du_k) =  f_{k}(x),
\]
where   
$ f_{k}(x) = S(x, T_{k})u_{t}(x, T_{k})$.
Now, from the estimates in Section \ref{S:existence} we find
\[
|| f_{k}||_\infty \leq ||(\ve^2  + |Du_{k}|^2)^{p/2-1}||_\infty  \ ||u_t||_\infty \leq C(\ve, g),
\]
where $C(\ve, g)$ is independent of $k$, and thus the $f_k$'s have uniformly bounded $L^{2}(\Om)$ norms. So, by the structure of the principle part in the equation above, we  have by standard elliptic theory, see for  instance \cite{G}, Theorem  8.1  on p. 267, where the same proof goes through with $f(=f_{k})$ considered as a scalar term,
\[
 ||u_{k}||_{W^{2,2}(\Omega_{1})} \leq  C(\Omega_{1}), 
\]
for any $\Omega_{1}$ compactly contained in $\Omega$, $C$ being independent of $k$. 
Therefore,  by the theorem of Ascoli-Arzel\`a and standard $L^{2}$ theory, we obtain a  subsequence $ u_{k}\to v$,  uniformly  in  $\Omega$,  $Du_{k}  \to Dv$ strongly in $L^{2}$, and $ D^{2} u_{k}  \to  D^{2} v$ weakly in $\Omega_{1}$, for any $\Omega_{1}\Subset \Om$. Now, take a countable exhaustion of $\Omega$ by compact  subdomains. Thus, by employing  a  standard  diagonal process, we obtain a  sequence of times  $T_{k}$  $ \to  \infty$   such that   $ u_{k} \to  v$ uniformly and  $Du_{k} \to Dv$ pointwise  a.e. and  $D^{2}u_{k}  \to D^{2}v$,   weakly on every compact subset of $\Omega$. Also, because of uniform convergence, we have  $||Dv||_\infty \leq  C$, since this is true for each $Du_{k}$.
By Lebesgue dominated  convergence theorem, we conclude for this sequence that $Du_{k} \to Dv$ strongly in $L^{2}( \Omega)$. Let now $\phi$ be as in \eqref{e:main}, with supp $\phi$ contained in $\Omega_{1}\Subset \Om$. Because of $L^2$ convergence of gradients, we have
\begin{equation}\label{e:e1}
 \int_{\Omega} <Du_{k},D\phi> dx  \to \int_{\Omega}<Dv,D\phi> dx.
\end{equation}
Also, denoting by $S_k  = (\ve^2  + | Du_k|^2)^{p/2-1}$, and $S_v  = (\ve^2  + | Dv|^2)^{p/2-1}$, we have
\begin{align}\label{e:e2}
 \int_\Om   S_{k}^{-1} <Du_{k},DS_{k}> \phi dx  & =  (p -2) \int_\Om  \frac{ (u_{k})_{i}(u_{k})_{j}(u_{k})_{ij}}{\ve^{2} + |Du_{k}|^{2}} \phi dx
 \\
 & \underset{k\to \infty}{\longrightarrow}  (p-2) \int_\Om \frac{ v_{i}v_{j}v_{ij}}{\ve^{2} + |Dv|^{2}} \phi dx  = \int_\Om S_v^{-1}<Dv,DS_v>\phi dx.
\notag
\end{align}
(The equation \eqref{e:e2} is justified by the (strong) $L^{2}$ convergence of first derivatives, and the weak convergence of second derivatives, as stated above. More precisely, the convergence in \eqref{e:e2} is justified  by  adding and subtracting $(p-2) \int \frac{v_iv_j( u_{k})_{ij} \phi}{\ve^2+ |Dv|^2}$, and using the fact that $||(u_{k})_{ij}||_{2}$ is uniformly bounded in supp $\phi \subset \Om_1$.)

Let now $\delta > 0$. Then, the uniform convergence of the original sequence $u(\cdot,t_{k})$  implies that $u(\cdot,t_{k})$ is uniformly Cauchy. This means that for all $k$ large enough (depending on $\delta$) 
\[
|| u(\cdot, t_{k+1} ) - u(\cdot, t_{k})||_{L^{\infty}} \leq \delta.
\]
Therefore,  the absolute value of the left-hand side of  \eqref{e:main} is  
\[
\leq  \int_{\Omega} |u( x,t_{k+ 1} - u(x,t_{k})| |\phi(x)| dx  \leq  C  \delta,
\]
with the constant $C$ depending on $\phi$.
Thus, since $t_{k+1} - t_{k}\geq 1$,  we have 
\begin{equation}\label{e:e3}
\left|\int_{\Omega} \left[<D u(x, T_{k}),D\phi>  -  S^{-1}(x,T_{k})<Du(x,T_{k}),DS(x, T_{k})> \phi\right] dx \right| \leq  \frac{C \delta}{t_{k+1} - t_{k}} \leq C \delta.
\end{equation}
Therefore, from \eqref{e:e1}, \eqref{e:e2}, \eqref{e:e3} we obtain 
\[
\left|\int_{\Omega} \left[<Dv,D\phi> - S_v^{-1} <Dv,DS_v>\phi\right] dx \right| \leq C\delta, 
\]
since the same inequality is true for $u_k$, for all $k$'s sufficiently large.
From the arbitrariness of $\delta>0$, we conclude that $v$ satisfies    
\begin{equation}\label{Sv}
\int_{\Omega} S_v <Dv,D(S_v^{-1}\phi)> dx =  0,\ \ \ \ \text{for every}\ \phi\in C^{\infty}_0(\Om)
\end{equation}
Now, given any $\xi\in C^{\infty}_{0}( \Omega)$, we have that $S_v \xi\in W^{1,2}_{0}( \Omega)$. Let us choose a sequence $\phi_{k}\in C^{\infty}_{0}( \Omega)$ converging to $S_v\xi$ in $W^{1,2}(\Om)$. Without loss of generality, we can arrange that all $\phi_{k}$ be supported in some $\Om_{1}\Subset \Om$. From \eqref{Sv} we obtain
\begin{equation}\label{e:e4}
0 = \int_{\Om} S_v <Dv,D(S_v^{-1}\phi_{k})> dx = \int_\Om \left[<Dv,D\phi_{k}> -  S_v^{-1}<Dv,DS_v> \phi_{k}\right] dx .
\end{equation}
Since $\phi_{k} \to S_v\xi$ in $W^{1,2}(\Om)$, the first integral in the right-hand side of \eqref{e:e4} is easily seen to converge to $\int_\Om <Dv,D(S_v \xi)> dx$. For the second integral, since everything is supported in  $\Om_{1}$, we see from \eqref{e:e2}
\begin{align}\label{Sv2}
& \left|\int_\Om S_v^{-1}<Dv,DS_v> \phi_{k}-
S_v^{-1}<Dv,DS_v> (S_v \xi) \right| 
\\
& \leq  ||S_v^{-1}<Dv,DS_v>||_{L^2(\Om_1)} \  ||\phi_{k} - S_v\xi||_{L^2(\Om)} 
\notag\\
& = |p-2| \left\|\frac{ v_{i}v_{j}v_{ij}}{\ve^{2} + |Dv|^{2}}\right\|_{L^2(\Om_1)}  \ ||\phi_{k} - S_v\xi||_{L^2(\Om)}
\notag\\
& \leq  C  ||D^2 v||_{L^2(\Om_{1})}|| \ ||\phi_{k} - S_v\xi||_{2}  \to 0
\notag
\end{align}
Therefore, by passing to the limit for $k\to \infty$ in \eqref{e:e4}, we obtain
\[
\int_{\Omega} S_v<Dv,D\xi> dx  = 0.
\]  
By the arbitrariness of $\xi \in C^{\infty}_{0} (\Omega)$,  we conclude that  $v = v^\ve$ is a weak solution of 
\begin{equation}\label{pv}
\text{div}((\ve^{2} + |Dv|^{2})^{p/2 - 1}Dv) = 0.
\end{equation}
Since for each $k$ we have $u_{k} = g$ on $\partial \Om$, passing to the limit as $k\to \infty$ we conclude that $v = g$ on  $\partial \Om$.
Convexity of the  functional implies that  $v^{\ve}$  minimizes  $\int_\Om (\ve^{2} + |Df|^{2})^{p/2} dx$ among all $f$'s subject to boundary values $g$. The smoothness of $v^{\ve}$ follows from the elliptic  theory. We have thus proved 3) and 4). We are left with proving 1) and 2). 

Since $Du_k = Du(\cdot,T_k) \to Dv$ pointwise a.e., and the $Du_k$ satisfy uniform $L^\infty$ bounds we see that 2) holds a.e., and therefore everywhere ($v^\ve$ is smooth). Moreover, by Lebesgue dominated convergence we have $\int_{\Om} (\ve^2  + |Du_{k}|^{2})^{p/2} dx \to  \int_{\Om}( \ve^2  + |Dv|^2)^{p/2} dx$.
We now invoke Proposition \ref{P:mon} which gives the monotonicity of 
\[
t\to E_{\ve} (t) =  \int_{\Omega}(\ve^{2} + |Du^{\ve}(x,t)|^{2})^{p/2} dx.
\]
Because of \eqref{pv}, we know that $E_\ve(u_k) \overset{def}{=} E_\ve(T_k)$ decreases to $E_\ve(v)$. Given $\delta>0$ choose $N\in \mathbb N$ large enough that for $k>N$ we have $E_\ve(u_k) - E_\ve(v) \le \delta$. For $T>T_k$ and monotonicity we have
\[
0 \le E_\ve(T) - E_\ve(v) \le E_\ve(u_k) - E_\ve(v) \le \delta.
\]
This shows that $E_\ve(t)\searrow E_\ve(v) = m$, as $t\to \infty$. Now take any sequence $T_i\to \infty$. By the uniform Lipschitz bounds on the $u(\cdot,T_i)$ we know that there exists a subsequence, still denoted by the same symbol, such that $u(\cdot,T_i) \to u_0$ uniformly, and weakly in $W^{1,p}(\Om)$, as $i\to \infty$. By lower semicontinuity and convexity of the energy functional, we have
\begin{align*}
m & = \int_{\Om}( \ve^2  + |Dv|^2)^{p/2} dx = \underset{i\to \infty}{\lim} \int_{\Om}( \ve^2  + |D(\cdot,T_i)|^2)^{p/2} dx
\\
& \ge \int_{\Om}( \ve^2  + |Du_0|^2)^{p/2} dx
\end{align*}
However, since $m$ is the minimum of the energy functional, and $u_0$ and $v^\ve$ have the same boundary values, we conclude that $v^\ve = u_0$. We have thus shown that for any sequence $T_i\to \infty$
there exists a subsequence $T_{k_i}$ such that $u(\cdot,T_{k_i}) \to v$ uniformly. This establishes 1), thus completing the proof of the theorem.

\end{proof}

\begin{thrm}\label{T:minimization}
Let  $v^{\ve}$ be as in Theorem \ref{T:ltb}. Then, the  sequence $\{v^{\ve}\}_{\ve>0}$ converges uniformly to a function $v\in W^{1,p}(\Om)$, where $v$ is a $p$-harmonic function in $\Om$ having boundary values $g$. We thus have 
\[
\underset{\ve \to 0}\lim \underset{ t \to \infty}\lim\ u^{\ve}(x,t)  = \underset{\ve \to 0}\lim\ v^{\ve}(x) = v(x),
\]  
where in the first equality we have used 1) of Theorem \ref{T:ltb}.
\end{thrm}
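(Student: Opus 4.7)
The plan is to use the minimization characterization of $v^{\ve}$ together with uniform Lipschitz bounds and lower semicontinuity to pass to the limit $\ve\to 0$. First, I would note that part 2) of Theorem \ref{T:ltb} gives $\|Dv^{\ve}\|_{L^\infty(\Om)}\le C$ with $C$ inherited from the $\ve$-independent Lipschitz bound on $u^{\ve}$ obtained in Section \ref{S:existence} via the barrier argument, and that $\|v^{\ve}\|_{L^\infty(\Om)}\le \|g\|_{L^\infty(\Om)}$ (since $u^{\ve}$ obeys the maximum principle). Hence $\{v^{\ve}\}_{\ve>0}$ is uniformly bounded and equicontinuous on $\overline\Om$, so by the Arzelà-Ascoli theorem there exists a subsequence $\{v^{\ve_k}\}$ and a Lipschitz function $v\in W^{1,\infty}(\Om)$ with $v^{\ve_k}\to v$ uniformly in $\overline\Om$. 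Because $v^{\ve}=g$ on $\p\Om$ for every $\ve$, the uniform convergence forces $v=g$ on $\p\Om$.

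Next, I would exploit the minimizing property established in the proof of Theorem \ref{T:ltb}: $v^{\ve}$ minimizes the functional
\[
F_\ve(f) = \int_\Om (\ve^2+|Df|^2)^{p/2}\,dx
\]
over all $f \in W^{1,p}(\Om)$ with $f=g$ on $\p\Om$. Pick any such competitor $w$. Then $F_\ve(v^\ve)\le F_\ve(w)$, and since $(\ve^2+|Dw|^2)^{p/2}\le (1+|Dw|^2)^{p/2}\in L^1(\Om)$ for $\ve\le 1$, the dominated convergence theorem gives $F_\ve(w)\to \int_\Om |Dw|^p\,dx$ as $\ve\to 0$. On the other hand, the uniform Lipschitz bound yields weak $W^{1,p}$-convergence (along a further subsequence, not relabeled) $v^{\ve_k}\rightharpoonup v$, and weak lower semicontinuity of the $p$-Dirichlet energy implies
\[
\int_\Om |Dv|^p\,dx \le \liminf_{k\to\infty}\int_\Om |Dv^{\ve_k}|^p\,dx \le \liminf_{k\to\infty} F_{\ve_k}(v^{\ve_k}) \le \int_\Om |Dw|^p\,dx.
\]
Thus $v$ minimizes the $p$-Dirichlet energy among $W^{1,p}$-functions with boundary trace $g$, and is therefore a weak (hence, by regularity, classical $C^{1,\alpha}$) solution of the $p$-Laplace equation in $\Om$.

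Finally, strict convexity of the $p$-Dirichlet integral for $p>1$ (when boundary values are fixed) shows that the minimizer $v$ is unique. Consequently every uniformly convergent subsequence of $\{v^\ve\}$ must have the same limit $v$, and combined with the Arzelà-Ascoli equicontinuity from the first step this forces the entire net $v^\ve\to v$ uniformly on $\overline\Om$. The chain of equalities in the statement then follows from part 1) of Theorem \ref{T:ltb}. The only genuinely delicate point is keeping the Lipschitz constant for $v^\ve$ independent of $\ve$, which is what allows simultaneously the application of Arzelà-Ascoli and the dominated convergence estimate $F_\ve(w)\to \int_\Om |Dw|^p\,dx$; beyond that the argument is a standard $\Gamma$-convergence style comparison.
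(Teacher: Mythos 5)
Your proposal is correct and follows essentially the same approach as the paper: Arzel\`a-Ascoli plus uniform Lipschitz bounds to extract a uniformly (and weakly $W^{1,p}$) convergent subsequence, the minimizing property of $v^{\ve}$ together with weak lower semicontinuity to identify the limit as the $p$-Dirichlet minimizer with boundary data $g$, and uniqueness of the minimizer to upgrade subsequential to full convergence. Your version merely spells out a couple of steps the paper leaves implicit (the dominated convergence argument for $F_\ve(w)\to\int|Dw|^p$, and strict convexity as the source of uniqueness).
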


\begin{proof}
Given the function $ v^{\ve}$ constructed in Proposition \ref{T:ltb}, take any  sequence  $\ve_{i}\searrow 0$. Because of the uniform Lipschitz bounds on the $v^\ve$, we can find a subsequence, which we continue to indicate with $v^{\ve_{i}}$, converging uniformly to some $v$, and such that $v^{\ve_{i}}$ converges weakly in $W^{1,p}$. If $f$ has boundary values $g$, by lower semicontinuity and the minimizing  property of  $v^{\ve}$, we find 
\begin{align}\label{Sv3}
\int_{\Om} | Dv|^{p} dx & \leq   \underset{ i \to \infty} \liminf\int_{\Omega} |Dv^{\ve_{i}}|^{p} dx \leq \underset{ i \to \infty} \liminf  \int_{\Omega} (\ve_{i}^{2} + |Dv^{\ve_{i}}|^{2})^{p/2} dx
\notag\\
&  \leq \underset{i \to \infty}\liminf \int_{\Omega} (\ve_{i}^{2} + |Df|^2)^{p/2} dx =  \int_{\Omega} |Df|^{p} dx
\notag
\end{align} 
Therefore,  $v$ minimizes   $\int_{\Om} |Df|^{p} dx$ with  boundary  value  $g$. Thus, every sequence $v^{\ve_{i}}$ has a converging subsequence to $v$, therefore the whole sequence converges to $v$. Since  $p$-harmonic functions are characterized by the minimizing property of the Dirichlet  integral with given   boundary values, we have proved the desired result.

\end{proof}

Theorem \ref{T:minimization} is the counterpart, for the case $p>1$ in equation \eqref{p}, of a result that in in \cite{SZ} was proved for the case $ p =1$, where it was shown that $v^{\ve}$ converge to a Lipschitz function $v$ of least  gradient. In this regard, we recall that an example in \cite{SZ} shows that, for the case  $p =1$, one has in general that 
\begin{align}
&\underset{\ve \to 0}\lim \underset{t \to \infty}\lim u^{\ve}(x, t) \neq \underset{t \to \infty}\lim \underset{\ve \to 0}\lim u^{\ve}(x, t),
\notag
\end{align}
and therefore when $p=1$ one concludes that $\underset{t \to \infty}\lim u^\ve(\cdot, t)$ might not be a function of least gradient, in general. This reveals the complexity of the large-time behavior associated with the  generalized mean curvature flow. However, we show that for $1 < p \leq 2$, the above limits do commute. We follow the ideas in \cite{ISZ}. We need the following intermediate lemma.

\begin{lemma}\label{l:com}
For $1< p\leq 2$, let $u$ be the unique viscosity solution in $\Om\times(0,\infty)$ in Theorem \ref{T:existencebdd}. Then,  
\[
\int_0^\infty \int_{\Omega} u_{t}^{2} dx dt < \infty.
\]
\end{lemma}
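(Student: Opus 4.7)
The approach is to exploit the energy identity already derived in the proof of Proposition \ref{P:mon} at the regularized level, obtain a uniform $L^2$ bound on $u^\ve_t$ independent of both $\ve$ and the time horizon, and then pass to the limit.

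First I would go back to the identity established inside the proof of Proposition \ref{P:mon}, namely
\[
\frac{d}{dt}\int_\Om (\ve^2+|Du^\ve|^2)^{p/2}\,dx \;=\; -p\int_\Om (\ve^2+|Du^\ve|^2)^{(p-2)/2}(u^\ve_t)^2\,dx,
\]
where the integration by parts is justified because $u^\ve_t\equiv 0$ on $\p\Om$ (the Dirichlet datum is time independent) and because $u^\ve$ is smooth for $t>0$. Integrating in time over $(0,T)$ yields
\[
p\int_0^T\!\!\int_\Om (\ve^2+|Du^\ve|^2)^{(p-2)/2}(u^\ve_t)^2\,dx\,dt \;\le\; \int_\Om(\ve^2+|Dg|^2)^{p/2}\,dx \;\le\; C(g,\Om),
\]
with $C(g,\Om)$ independent of $\ve$ and $T$.

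Next I would use the restriction $1<p\le 2$. Since $(p-2)/2\le 0$, the map $s\mapsto s^{(p-2)/2}$ is non-increasing, so
\[
(\ve^2+|Du^\ve|^2)^{(p-2)/2} \;\ge\; (\ve^2+\|Du^\ve\|_\infty^2)^{(p-2)/2}.
\]
By the uniform Lipschitz bound for $u^\ve$ on $\Om\times(0,\infty)$ established via the barrier argument in Section \ref{S:existence} (see Section \ref{SS:bded}), $\|Du^\ve\|_\infty\le C(g)$ with $C(g)$ independent of $\ve$. Hence for $\ve\le 1$ we have $(\ve^2+|Du^\ve|^2)^{(p-2)/2}\ge c_0>0$ uniformly, and we conclude
\[
\int_0^T\!\!\int_\Om (u^\ve_t)^2\,dx\,dt \;\le\; \frac{C(g,\Om)}{p\,c_0}\;=\;C_1,
\]
uniformly in both $\ve\in(0,1]$ and $T>0$.

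Finally I would pass to the limit. By the uniform bound, there exists a subsequence $\ve_k\to 0$ such that $u^{\ve_k}_t\rightharpoonup w$ weakly in $L^2(\Om\times(0,T))$ for every $T>0$. Since $u^\ve\to u$ uniformly on compact subsets of $\Om\times[0,\infty)$ (Remark \ref{R:limit}), the convergence holds in the sense of distributions, and hence $w=u_t$ distributionally; consequently $u_t\in L^2(\Om\times(0,T))$ and by weak lower semicontinuity of the $L^2$ norm,
\[
\int_0^T\!\!\int_\Om u_t^2\,dx\,dt \;\le\; \liminf_{k\to\infty}\int_0^T\!\!\int_\Om (u^{\ve_k}_t)^2\,dx\,dt \;\le\; C_1.
\]
Since $C_1$ is independent of $T$, letting $T\to\infty$ via monotone convergence finishes the proof. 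The only subtle step is identifying the weak $L^2$ limit of $u^\ve_t$ with $u_t$; this is standard once one has uniform convergence $u^\ve\to u$, but it is the place where the restriction $p\le 2$ really bites, since for $p>2$ the weight $(\ve^2+|Du^\ve|^2)^{(p-2)/2}$ degenerates where $Du^\ve$ is small and the $L^2$ bound on $u^\ve_t$ is lost.
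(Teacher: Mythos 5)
Your proposal is correct and follows essentially the same route as the paper: integrate the identity from Proposition \ref{P:mon} in time, use the uniform Lipschitz bound and the restriction $1<p\le 2$ to bound the weight $(\ve^2+|Du^\ve|^2)^{p/2-1}$ from below by a positive constant independent of $\ve$ and $T$, and then pass to the limit by weak $L^2$ convergence of $u^\ve_t$ to $u_t$. The only difference is cosmetic: you are slightly more explicit in identifying the weak limit distributionally, whereas the paper simply cites weak convergence on compact sets.
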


\begin{proof}
From the calculations in the proof of Proposition \ref{P:mon}, we obtain for any $T  > 0$
\begin{align}
& \int_{\Omega} (|Dg(x)| ^{2} +  \ve^{2})^{p/2} dx -  \int_{\Omega} (|Du^{\ve}(x,T)|^{2} + \ve^{2})^{p/2} dx 
\\
& = p \int_0^T \int_{\Omega} (\ve^{2} + |Du^{\ve}(x,t)|^{2} )^{p/2-1} u^{\ve}_{t}(x,t)^{2} dx dt.
\notag
\end{align}
We thus have for all $T > 0$
\begin{equation}\label{e:e8}
p \int_0^T \int_{\Omega} (\ve^{2} + |Du^{\ve}|^{2} )^{p/2-1} (u^{\ve}_{t})^{2} dx dt \leq \int_{\Omega} (|Dg| ^{2} +  \ve^{2})^{p/2} dx\le C(g),
\end{equation}
where $C(g)$ is independent of $T$ and $\ve$. On the other hand, we know there is a constant $B = B(g)> 0$, independent of $T$ and $\ve$, such that $|Du^\ve(x,t)|\le B$ for every $x\in \Om$ and every $0\le t\le T$. We conclude that, when $1<p \leq 2$, then  
\[
(\ve^{2} + |Du^{\ve}(x,t)|^{2})^{p/2-1} \ge B > 0,\ \ \ \ \text{on}\ \Om\times (0,T).
\]
Therefore, for any $T>0$, we have
\[
\int_0^T \int_{\Omega}(u^{\ve}_{t})^{2} dx dt \le C^\star,
\]
where $C^\star$ is independent of $T, \ve$. Since $u^{\ve}_{t} \to u _{t}$ weakly on compact sets, we have reached the desired conclusion.
 
\end{proof}

We now state the following theorem.

\begin{thrm}\label{T:conv}
For $1< p\leq 2$, let $u$ be the unique viscosity solution in $\Om\times(0,\infty)$ in Theorem \ref{T:existencebdd}. Then, as  $ t \to \infty$ the function $u$ converges to the unique $p$-harmonic  function $v$ in $\Om$ having boundary values $g$ on $\p \Om$.
\end{thrm}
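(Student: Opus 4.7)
The plan is to combine the $L^2$-dissipation of $u_t$ provided by Lemma \ref{l:com} with a compactness argument on time-translates of $u$, and with the stationary rigidity supplied by Lemma \ref{l:pevol}.

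I first reduce to smooth data. If $g_n\in C^2(\overline\Omega)$ approximates $g$ uniformly, then Theorem \ref{T:max1} yields $\sup_{\overline\Omega\times[0,\infty)}|u_n-u|\le \sup_{\overline\Omega}|g_n-g|$, while the corresponding $p$-harmonic extensions $v_n$ of $g_n$ converge uniformly to $v$ by standard elliptic stability. A three-$\varepsilon$ argument then reduces the theorem to $g\in C^2(\overline\Omega)$, in which case the uniform Lipschitz estimates of Section \ref{SS:bded} pass from $u^\varepsilon$ to $u$ in the variables $(x,t)$.

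Assuming $g\in C^2(\overline\Omega)$, fix any $t_k\to\infty$ and define $u_k(x,s):=u(x,t_k+s)$ on $\overline\Omega\times[0,1]$. These translates are uniformly bounded and equi-Lipschitz in $(x,s)$, so Arzel\`a--Ascoli yields a subsequence (not relabeled) with $u_k\to\tilde w$ uniformly on $\overline\Omega\times[0,1]$. Each $u_k$ is a viscosity solution of \eqref{p} on $\Omega\times[0,1]$ satisfying $u_k(\cdot,s)=g$ on $\p\Omega$, so the stability of viscosity solutions under uniform convergence (Property 1 of Section \ref{S:cp}) makes $\tilde w$ a viscosity solution of \eqref{p} with $\tilde w(\cdot,s)=g$ on $\p\Omega$.

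From Lemma \ref{l:com}, $\int_{t_k}^{t_k+1}\int_\Omega u_t^2\,dx\,dt\to 0$, that is, $\partial_s u_k\to 0$ strongly in $L^2(\Omega\times(0,1))$. Together with the uniform convergence $u_k\to\tilde w$, distributional differentiation gives $\partial_s\tilde w\equiv 0$, so $\tilde w(x,s)=w(x)$ is time-independent. Lemma \ref{l:pevol} then identifies $w$ as a $p$-harmonic function in $\Omega$, while the boundary condition $w=g$ on $\p\Omega$ is inherited in the limit. Uniqueness of the $p$-harmonic extension of $g$ forces $w=v$, and since every subsequence of $\{u(\cdot,t_k)\}$ admits a further subsequence converging uniformly to the same $v$, the full convergence $u(\cdot,t)\to v$ on $\overline\Omega$ as $t\to\infty$ follows. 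The central step---and the reason the restriction $1<p\le 2$ enters---is the stationarity of $\tilde w$, which depends on the global bound $\int_0^\infty\int_\Omega u_t^2<\infty$ from Lemma \ref{l:com}; once this is secured, Lemma \ref{l:pevol} collapses the parabolic long-time question into an elliptic uniqueness statement.
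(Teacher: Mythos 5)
Your proof follows essentially the same route as the paper's: time translates of $u$, uniform Lipschitz bounds plus Arzel\`a--Ascoli for compactness, Lemma \ref{l:com} to force stationarity of the limit, and Lemma \ref{l:pevol} to identify the limit as $p$-harmonic. The only mild divergence is the final step, where you deduce full convergence from the every-subsequence argument together with uniqueness of the $p$-harmonic extension, while the paper instead applies Theorem \ref{T:max1} to $u^{k}$ and $v$ (both solutions of \eqref{p} with the same lateral boundary data $g$) to propagate the $\varepsilon$-closeness of $u^{k}(\cdot,0)$ and $v$ forward in time; both are correct.
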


\begin{proof}
Let $C_{T}$ be the cylinder  $ \Omega  \times [0, T]$. Consider the sequence  $u^{k}$  defined  by $u^{k}(x,t) = u(x,t+k)$. Because of the uniform Lipschitz bounds in Section \ref{S:existence}, by the theorem of Ascoli-Arzel\`a we  have a subsequence  $u^{k} \to  v$ locally  uniformly in $\overline \Om \times [0,\infty)$, and thus  uniformly in the compact set  $\overline{C_{T}}$. It is thus easily verified that the function $v$ is also a solution of \eqref{p}. We claim that  $v$ is independent of $t$. From Lemma \ref{l:com} we find that 
\begin{equation}
\underset{k \to \infty}\lim \int_{C_{T}} (u^{k}_{t})^{2} dx dt  = \underset{k \to \infty}\lim  \int_k^{k + T} \int_{\Om} u_{t}^2 dx dt \leq \underset{k \to \infty}\lim \int_k^\infty \int_{\Om} u_{t}^2 dx dt =  0. 
\end{equation}
Since $u^{k}_{t} \rightharpoonup v_{t}$ in $\overline{C}_{T}$, by lower semicontinuity we obtain 
\[
 \int_{C_{T}} v_{t}^{2} dx dt = 0.
 \]
As in the proof of Theorem 4.4 in \cite{ISZ}, we conclude that  $v$ is independent of $t$ in $C_{T}$. By the arbitrariness of $T$, we conclude that $v$ is independent of $t$ in $\Om \times [0, \infty)$.  The fact that  $v$ is $p$-harmonic is seen from  Lemma \ref{l:pevol}. It remains to be seen  that $u(\cdot,t)$ converges to $v$ uniformly as $t\to \infty$. Given $\ve > 0$, choose $k$ large enough such that $|u^{k}(x,0) - v(x)| \leq \ve$ (because of uniform convergence in the compact set $\overline{\Om} \times \{0\}$). Now, we apply Theorem \ref{T:max1}  to conclude that  $|u^{k}(x,t) - v(x)|  = |u(x,t+ k) - v(x)| \leq \ve$ for all $x\in \overline \Om$, and every $t \geq k$. (note that on the lateral boundary both functions equal $g$). This concludes the proof.

\end{proof}

\begin{cor}\label{C:commute}
For $1< p\leq 2$, let $u$ be the unique viscosity solution in $\Om\times(0,\infty)$ in Theorem \ref{T:existencebdd}. Then, 
\begin{align}
&\underset{\ve \to 0}\lim \underset{t \to \infty}\lim u^{\ve}(x,t) = \underset{t \to \infty}\lim \underset{\ve \to 0}\lim u^{\ve}(x,t),
\notag
\end{align}
\end{cor}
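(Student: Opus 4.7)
The plan is to observe that, under the hypotheses of this section, both iterated limits reduce to the unique $p$-harmonic function $v$ in $\Om$ with boundary datum $g$, so the commutativity is an immediate combination of results already proved.

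First I would handle the left-hand side. For each fixed $\ve>0$, Theorem \ref{T:ltb}(1) gives $\lim_{t\to\infty} u^\ve(x,t) = v^\ve(x)$ uniformly in $\overline\Om$, where $v^\ve$ satisfies the regularized $p$-Laplace equation with boundary values $g$. Then Theorem \ref{T:minimization} shows that $v^\ve \to v$ uniformly on $\overline\Om$ as $\ve\to 0$, where $v$ is the unique $p$-harmonic function in $\Om$ with boundary values $g$. Combining,
\[
\underset{\ve\to 0}{\lim}\ \underset{t\to\infty}{\lim}\ u^\ve(x,t) = v(x).
\]

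Next I would handle the right-hand side in the opposite order. Since we are in the smooth-datum regime of Section \ref{S:existence}, Remark \ref{R:limit} applies and yields $\lim_{\ve\to 0} u^\ve(x,t) = u(x,t)$ uniformly on compact subsets of $\overline\Om\times[0,\infty)$, where $u$ is the unique viscosity solution of the Cauchy-Dirichlet problem \eqref{cd}. Since $1<p\le 2$, Theorem \ref{T:conv} then asserts that $u(\cdot,t)\to v$ uniformly on $\overline\Om$ as $t\to\infty$, with the same $p$-harmonic function $v$ as above. Hence
\[
\underset{t\to\infty}{\lim}\ \underset{\ve\to 0}{\lim}\ u^\ve(x,t) = v(x),
\]
and the corollary follows.

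There is essentially no obstacle here — the real content lies in the previous two theorems. The only minor point to verify is that the hypotheses on $g$ needed to invoke Remark \ref{R:limit} (sufficient smoothness, e.g.\ $C^2$) coincide with those needed for the existence and convergence results of Theorems \ref{T:ltb}, \ref{T:minimization} and \ref{T:conv}; this is the standing assumption throughout Section \ref{S:ltb}, so no additional hypothesis is required.
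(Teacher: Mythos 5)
Your proposal is correct and follows essentially the same route as the paper: Theorem \ref{T:minimization} (together with Theorem \ref{T:ltb}) identifies $\lim_{\ve\to 0}\lim_{t\to\infty} u^\ve = v$, while Remark \ref{R:limit} and Theorem \ref{T:conv} identify $\lim_{t\to\infty}\lim_{\ve\to 0} u^\ve = v$. Your closing remark about the smoothness hypothesis on $g$ is a sensible sanity check and matches the standing assumptions of Section \ref{S:ltb}.
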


\begin{proof}
From Remark \ref{R:limit} we have
\[
\underset{\ve\to 0}{\lim}\ u^\ve(x,t) = u(x,t),
\] 
the convergence being uniform on compact subsets of $\Om \times [0,\infty)$. Using this fact, and Theorem \ref{T:conv}, we conclude
\[
\underset{t \to \infty}\lim \underset{\ve \to 0}\lim u^{\ve}(x,t) = \underset{t \to \infty}\lim\ u(x,t) = v(x).
\]
This fact, combined with Theorem \ref{T:minimization}
gives the desired conclusion.

\end{proof}

\begin{rmrk}\label{R:open}
Corollary \ref{C:commute} makes the case $1<p\le 2$ of equation \eqref{p} very different from that of \eqref{mmc}, when $p=1$. For \eqref{mmc} there also exists an equilibrium solution independent of time, but the conclusion of Corollary \ref{C:commute} does not hold in general, see \cite{SZ} and \cite{ISZ}. It remains an interesting open question whether Corollary \ref{C:commute} continues to be valid for $p > 2$. In such case one needs to find an appropriate replacement of Lemma \ref{l:com}.
\end{rmrk}

\section{ Energy  estimates  and monotonicity}\label{S:eemon}

For the case $ p =1$, the  following monotonicity of the energy of the unique (bounded) solution $u$ to the Cauchy problem for \eqref{mmc} was established in \cite{ES3}: 
\begin{equation}\label{fm}
\int_{\Rn} |Du|(x,t_{2}) dx  \leq  \int_{\Rn} |Du|(x, t_{1}) dx   \quad  for\quad  all\   t_{1} \leq t_{2}.
\end{equation}
For $p>1$ we can prove an analogous monotonicity result.

\begin{thrm}[Unweighted energy monotonicity]\label{T:1mon}
 Let $u$ be the unique viscosity solution of \eqref{me} obtained in Theorem \ref{T:existence}, where the initial datum $g$ is Lipschitz continuous, and constant outside a compact set. Then, 
\begin{equation}\label{fm1}
\int_{\Rn} |Du|^p(x, t_{2}) dx  \leq  \int_{\Rn} |Du|^p(x, t_{1}) dx   \quad  \text{for all}\   t_{1} \leq t_{2}.
\end{equation}
\end{thrm}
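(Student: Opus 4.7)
The plan is to adapt the argument used for Proposition \ref{P:enemon} to the whole space, working with the smooth regularized solutions $u^\ve$ of \eqref{e:1aprox} from Section \ref{S:existence}. The essential new ingredient needed on $\Rn$ is sufficient spatial decay of both $u^\ve$ and $Du^\ve$, uniform in $\ve$, so that integration by parts produces no boundary contributions at infinity. Such decay will come precisely from comparison with the explicit solution $G_p$ of Proposition \ref{P:es}.

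Using Remark \ref{R:scales} we may subtract the constant value of $g$ at infinity and assume $g$ is Lipschitz, bounded by some $M>0$, and supported in a ball $B_S$; by convolution we may further take $g$ smooth and still constant outside a compact set. Choose $A,\tau_0>0$ so that $A G_p(x,\tau_0)\ge M$ for $x\in B_S$, so that $\pm g\le AG_p(\cdot,\tau_0)$ on $\Rn$. Since $(x,t)\mapsto AG_p(x,t+\tau_0)$ is a viscosity solution of \eqref{me} by Proposition \ref{P:es} and Remark \ref{R:scales}, the Tychonoff comparison (Theorem \ref{T:gcp}, applied in tandem with Theorem \ref{T:max3}) yields
\[
|u(x,t)|\le A\, G_p(x,t+\tau_0).
\]
The same Gaussian decay can be inherited uniformly by $u^\ve$, either by constructing barriers directly for the uniformly parabolic equation \eqref{e:1aprox} or by transferring the decay back from the limit $u$ using uniform convergence $u^\ve\to u$ together with the bounds \eqref{e:e10}. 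Because of the uniform ellipticity \eqref{ue}, interior parabolic gradient estimates applied on unit balls $B_1(x_0)$ with $|x_0|$ large (via the differentiated equation with constants independent of $\ve$) then promote this to a pointwise decay of the gradient,
\[
|Du^\ve(x,t)|\le C_T\, e^{-c_T|x|^2}\qquad\text{for }(x,t)\in\Rn\times[0,T],
\]
with $C_T,c_T>0$ independent of $\ve$.

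With this decay established, the functional
\[
E^\ve(t) := \int_{\Rn}\bigl[(\ve^2+|Du^\ve(x,t)|^2)^{p/2}-\ve^p\bigr]\,dx
\]
is finite on $[0,T]$, and integrating by parts exactly as in Proposition \ref{P:mon}---with the boundary integrals at infinity vanishing thanks to the Gaussian decay above---and using the divergence-form reformulation $(\ve^2+|Du^\ve|^2)^{(p-2)/2}u^\ve_t=\mathrm{div}((\ve^2+|Du^\ve|^2)^{(p-2)/2}Du^\ve)$ of \eqref{e:1aprox}, we obtain
\[
\tfrac{d}{dt}E^\ve(t)=-p\int_{\Rn}(\ve^2+|Du^\ve|^2)^{(p-2)/2}(u^\ve_t)^2\,dx\le 0.
\]
Hence $E^\ve(t)\le E^\ve(0)$ for every $t\ge 0$. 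Letting $\ve\to 0$ on the right by dominated convergence (since $Dg$ is bounded and compactly supported) and on the left by Fatou/lower semicontinuity (using pointwise a.e.\ convergence of $Du^\ve$ to $Du$ along a subsequence, a consequence of local H\"older estimates together with the uniform decay just obtained) yields
\[
\int_{\Rn}|Du(x,t)|^p\,dx \le \int_{\Rn}|Dg(x)|^p\,dx.
\]
To extend this to arbitrary $t_1\le t_2$, take $u(\cdot,t_1)$ as a new initial datum; by the Gaussian decay this function approaches its constant limit at infinity, so it can be approximated by Lipschitz $h_k$ constant outside compact sets with $Dh_k\to Du(\cdot,t_1)$ in $L^p(\Rn)$. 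By Theorem \ref{T:max4} the associated solutions $u_k$ converge uniformly to $u(\cdot,t_1+\cdot)$, and one concludes by lower semicontinuity exactly as in the last step of Proposition \ref{P:enemon}.

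The principal obstacle is the decay step: establishing uniform-in-$\ve$ Gaussian bounds for both $u^\ve$ and $Du^\ve$. Comparison with $G_p$ applies cleanly only to the singular equation \eqref{me}, not to its regularization \eqref{e:1aprox}, so one must either build barriers adapted to the regularized equation or transfer the decay from the limit $u$ back to $u^\ve$ with quantitative control in $\ve$; subsequently, promoting the pointwise decay of $u^\ve$ into pointwise decay of $Du^\ve$ depends crucially on the uniform ellipticity \eqref{ue} and on interior $C^{1,\alpha}$ estimates whose constants do not degenerate as $\ve\to 0$.
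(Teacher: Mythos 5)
Your approach is genuinely different from the paper's. The paper does not attempt to establish pointwise Gaussian decay of $Du^\ve$ uniformly in $\ve$; instead it adapts the argument of Lemma~2.1 in \cite{ES3}: introduce the smooth weight $\phi(x) = e^{-b(1+|x|^2)^{1/2}}$, work with $F^\ve_b(t) = \int_{\Rn}\phi^2(\ve^2+|Du^\ve|^2)^{p/2}\,dx$, integrate by parts (which is harmless because $\phi$ decays exponentially and $|Du^\ve|$ is uniformly bounded), absorb the cross term via Cauchy--Schwarz and the inequality $|D\phi|\le b\phi$, and apply Gronwall to get $F^\ve_b(t)\le e^{b^2 p t}F^\ve_b(0)$. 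Sending $\ve\to 0$ and then $b\to 0$ gives $\int_{\Rn}|Du(x,t)|^p dx\le\int_{\Rn}|Dg|^p dx$ with no decay input whatsoever for $Du^\ve$. The Gaussian bound $|u(x,t)|\le C(g)G_p(x,t+1)$ obtained by comparison with $G_p$ (via Theorem~\ref{T:max3}) is used only at the very end, and only for the \emph{limit} $u$, to control the tail $\frac{1}{k^p}\int_{|x|>k}|u(x,t_1)|^p dx$ when taking $g_k=h_k u(\cdot,t_1)$ as a new compactly supported datum.

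The gap in your proposal is exactly the one you flag: the uniform-in-$\ve$ Gaussian decay of $u^\ve$ and especially of $Du^\ve$ is not established, and it is not a minor omission. Proposition~\ref{P:es} and Theorem~\ref{T:max3} furnish a barrier for the singular equation \eqref{me}, not for the regularized problem \eqref{e:1aprox}; a supersolution of \eqref{e:1aprox} with a stationary Gaussian profile fails, and one can check that whether a forward Gaussian profile works as a barrier for \eqref{e:1aprox} depends delicately on the relation between the ellipticity constants and $p$ (for $p>2$ a naive ansatz produces a wrong-sign zero-order term near the origin). Transferring the decay of $u$ back to $u^\ve$ is also not provided by locally uniform convergence, which controls nothing at large $|x|$ without a rate. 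Since your unweighted functional $E^\ve$ is only known to be finite once $Du^\ve\in L^p(\Rn)$, and the justification of the integration by parts at infinity also requires this decay, the argument cannot proceed as written. The paper's weighted-Gronwall device exists precisely to avoid this difficulty, which is why it is the more robust path here.
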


\begin{proof}
By subtracting a constant, we can without loss of generality assume that $g$ be compactly supported. Denote by $u^\ve$ the solution to the regularized Cauchy problem \eqref{e:1aprox}. First, we also assume that $g$  is smooth, a fact which ensures bounds on derivatives of $u^{\ve}$, as in Section \ref{S:existence}. In this first part of the proof we adapt a beautiful argument in the proof of Lemma 2.1 in \cite{ES3}. Letting $\phi(x)=  e^{-b(1+|x|^2)^{1/2}}$, we define 
\[
F^{\ve}_{b}(t) = \int_{\Rn} \phi(x)^{2}( |Du^{\ve}(x,t)|^2 + \ve^{2})^{p/2} dx,
\]
and note that
\[
F^{\ve}_{b}(0) = \int_{\Rn} \phi(x)^{2}( |Dg(x)|^2 + \ve^{2})^{p/2} dx.
\]
Differentiating gives   
\begin{align*}
(F^{\ve}_{b})^{'}(t)  = & \ p \int_{\Rn} \phi^{2} ( |Du^{\ve}|^2 + (\ve)^{2})^{p/2 -1}<Du^{\ve},Du^{\ve}_{t}> dx. 
\\
 =  & -p  \int_{\Rn} \phi^{2} \text{div}(|Du^{\ve}|^2  + (\ve)^{2})^{p/2 -1}Du^{\ve})u^{\ve}_{t } dx
\\
&  - 2p \int_{\Rn}\phi<D\phi,Du^{\ve}>(|Du^{\ve}|^2  + \ve^2)^{p/2-1}u^{\ve}_{t} dx.
\end{align*}
If we now let  
\[
H^{\ve}  =  \text{div}(|Du^{\ve}|^2 +(\ve)^{2})^{p/2 -1} Du^{\ve}),
\]
then we can write the equation as
\[
H^{\ve} (|Du^{\ve}|^2  + \ve^2)^{1-p/2} = u^{\ve}_{t}.
\]
We thus find
\[
(F^{\ve}_{b})^{'}(t) = - p \int_{\Rn} \phi^2 (H^{\ve})^{2} (|Du^{\ve}|^2  + \ve^2)^{1-p/2} dx - 2p \int_{\Rn} \phi <D\phi,Du^{\ve}> H^{\ve} dx.
\]
Now, we have trivially $|Du^{\ve}|  \leq (|Du^{\ve}|^2  +  \ve^2)^{1/2}$. If
we write
\[
(|Du^{\ve}|^2  + \ve^2) = (|Du^{\ve}|^2  + \ve^2)^{1- p/2}(|Du^{\ve}|^2 + \ve^2)^{p/2},
\]
then, by Cauchy-Schwarz inequality, we easily obtain
\[
(F^{\ve}_{b})^{'}(t)  \leq p  \int_{\Rn} |D\phi|^2 (|Du^{\ve}|^2  + \ve^2)^{p/2} dx \leq b^2 p F^{\ve}_{b}(t),
\]
where we have used $| D\phi|\leq  b|\phi|$. Gronwall's inequality now easily gives for every $t\ge 0$  
\begin{equation}\label{e:ap}
F^{\ve}_{b}(t)  \leq  e^{b^{2}p t}  \int_{\Rn} \phi^{2}(x) (|Dg(x)|^2 + \ve^{2})^{p/2} dx.
\end{equation}  
(The inequality \eqref{e:ap} can be justified by taking a  sequence $t_{j} \searrow 0$, with $ t_{j} < t $, and noting that $Du^{\ve}(\cdot,t_{j}) \to Dg$,    $\phi^2 |Du^{\ve}|^{p}\leq \phi^2 ||Dg||_{\infty}^{p}$ which is in $L^{1}(\Rn)$ and then using Lebesgue dominated convergence theorem).
Let $K\subset \Rn$ be an arbitrarily fixed compact set. From \eqref{e:ap} we thus obtain for every fixed $t> 0$
\[
\int_{K} \phi(x)^{2}( |Du^{\ve}(x,t)|^2 + \ve^{2})^{p/2} dx \le  e^{b^{2}p t}  \int_{\Rn} \phi^{2}(x) (|Dg(x)|^2 + \ve^{2})^{p/2} dx.
\]
With $t> 0$ fixed, select a sequence $\ve_j\searrow 0$ such that 
$u^{\ve_j}(\cdot,t) \to u(\cdot,t)$  weakly in $W^{1,p}_{loc}(\Rn)$. 
Letting $j\to \infty$ in the latter inequality, by using the lower semicontinuity in the left-hand side and Lebesgue dominated convergence in the right-hand side, we find
\begin{equation}\label{e:ap1}
\ \int_{K} \phi^2(x) |Du(x, t)|^{p} dx  \leq  \int_{\Rn} \phi^2(x) |Dg(x)|^{p} dx. 
\end{equation}
Letting  $b\to 0$ in \eqref{e:ap1}, we obtain
\[
\int_{K} |Du(x, t)|^{p} dx  \leq  \int_{\Rn} |Dg(x)|^{p} dx   \quad  \ \ \text{for all}\   t  \in  [0, \infty).
\]
Since the latter estimate is true for all compact $K\subset \Rn$, by the monotone convergence theorem we conclude
\begin{equation}\label{e:ap3}
\ \int_{\Rn} |Du(x, t)|^{p} dx  \leq  \int_{\Rn} |Dg(x)|^{p} dx   \quad  \ \ \text{for all}\   t  \in  [0, \infty).
\end{equation}
To extend the estimate \eqref{e:ap3} to the case when $g$ is Lipschitz, we consider the $\ve_{k}$-mollifications of $g$ and call them $g_{k}$. Then, $g_{k} \to g $ uniformly  and in $W^{1,p}(\Rn)$. Let  $u_{k}$ be the solution to the Cauchy problem with initial datum $g_{k}$.  From uniform Lipschitz bounds in Section \ref{S:existence}, and Theorem \ref{T:max4}, we have that, at  any given time $t> 0$,  $u_{k}(\cdot,t) \to u(\cdot,t)$ uniformly, and  weakly in $W^{1,p}_{loc}(\Rn)$. Since \eqref{e:ap3}) holds for  $u_{k}$ and $g_{k}$, we first bound from below, as before, the integral in the left-hand side over a compact set $K$, use lower semicontinuity in the left-hand side, Lebesgue dominated convergence in the right-hand side, and finally let $K  \nearrow \Rn$ to conclude that \eqref{e:ap3} continues to hold when the initial datum $g$ is Lipschitz  continuous.

Finally, we establish \eqref{fm1}. With this objective in mind, let $G_{p}$ be the notable solution in Proposition \ref{P:es} above, and set $V(x,t) = G_{p}(x,t+1)$. We first claim that, for a  given $t\ge 0$, we have
\begin{equation}\label{ubV}
|u(x,t)| \leq C(g)  V(x,t),\ \ \ \ \ x\in \Rn.
\end{equation}
In order to prove \eqref{ubV}, we observe that if $g$ is Lipschitz continuous and compactly supported, then there exists a constant $C = C(g)\ge 0$ such that 
\[
- C V(x,0) \le g(x) \leq C V(x,0).
\]
Theorem \ref{T:max3}  then guarantees that \eqref{ubV} be true. We explicitly note that \eqref{ubV} implies, in particular, that $\lim_{|x| \to \infty}u(x, t) = 0$. And that, furthermore,
\[
\int_{\Rn} |Du(x,t)|^p dx <\infty,\ \ \ \ \ \text{for every}\ t\ge 0.
\]
However, this latter fact is already implied by the quantitatively precise estimate \eqref{e:ap3}.
Now, given $t_{1} \leq t_{2}$, for each $k\in \mathbb N$ let $h_{k}\in C^\infty_0(\Rn)$, $0\le h_k\le 1$, with $h_{k} =1$ for $|x| \leq k$ and $h_k\equiv 0$ for  $|x| \geq 2k$. Set  $g_{k} = h_{k} u(\cdot, t_{1})$ and let $u_{k}$ denote the solution to the Cauchy problem in $\Rn\times [t_{1}, \infty)$, corresponding to initial datum $g_{k}$. Because of \eqref{ubV}, it is easy to recognize that 
\[
g_{k} \to u(\cdot, t_{1}),\ \ \ \ \text{uniformly in}\ \Rn.
\]
Now, using the $L^{\infty}$ bounds of the solutions and their gradients from Section \ref{S:existence}, and the fact that $||h_{k}||_\infty \leq 1$, $||Dh_{k}||_\infty \leq C/k \leq C$, we obtain 
\[
||Du_{k}(\cdot, t_{2})||_\infty = ||Dg_{k}||_\infty \leq  (||Dh_{k}||_\infty || u(\cdot,t_1)||_\infty + ||h_{k}||_\infty ||Du(\cdot,t_1)||_\infty)\leq C(g). 
\]
Thus, by Theorem \ref{T:max4} and uniform Lipschitz bounds, we conclude that   $u_{k}(\cdot, t_{2}) \to u (, t_{2})$ uniformly, and  weakly in $W^{1,p}_{loc}(\Rn)$. Now,
\[
 \int_{\Rn} |Dg_{k}(x)|^{p} dx =  \int_{|x| < k}  |Du(x, t_{1})|^{p} dx + \int_{k < |x| < 2k} |Dg_{k}(x)|^{p} dx.
\]
By monotone convergence, the first integral in the right-hand side converges to $\int_{\Rn} |Du(x, t_{1})|^{p} dx$. We claim that 
\[
\underset{k\to \infty}{\lim} \int_{k < |x| < 2k} |Dg_{k}(x)|^{p} dx = 0.
\]
To recognize this fact, we observe that, using the estimate $|Dh_{k}| \leq  c/k$, we see that the integral is estimated from above by
\[
C\left(\int_{|x| > k} |Du(x, t_{1})|^{p} dx + \frac{1}{k^{p}} \int_{|x|> k} |u(x, t_{1})|^{p} dx\right).
 \]
From \eqref{e:ap3} the first  integral converges to $0$ as $k \to \infty$. The second integral, instead, also converges to $0$ because of \eqref{ubV}. We conclude that 
\[
\int |Dg_{k}(x)|^{p} dx  \to  \int |Du(x, t_{1})|^{p} dx.
\]
On the other hand, the energy estimate \eqref{e:ap3} allows to conclude that
\[
\int_{\Rn} |Du_k(x, t_2)|^{p} dx  \leq  \int_{\Rn} |Dg_k(x)|^{p} dx.
\]
At this point, we can repeat the argument following \eqref{e:ap3}, and passing to the limit as $k\to \infty$, 
we reach the desired conclusion \eqref{fm}.

\end{proof}

Our next objective is establishing a monotonicity result subtler than Theorem \ref{T:1mon}. To provide the reader with some motivation and historical background we recall that in \cite{S} Struwe proved the a regular solution $u$ of the harmonic map flow from $\R^m\times (0,T_0)$ into a compact manifold $\mathcal N$ satisfies the following monotonicity result: \emph{for any $x\in \R^m$ and any fixed $0<T<T_0$, the function
\[
t \to E(t)=(T-t)\int_{\Rn} |\nabla u(y,t)|^2 G(x,y,T-t) dy,\ \ \ \ 0<t<T,
\]
is non-increasing}. Here, we have denoted by $G(x,y,t) = G(x-y,t) = (4\pi t)^{-\frac{n}{2}} \exp(-\frac{|x-y|^2}{4t})$ the standard heat (or Gauss-Weierstrass) kernel. As it is well-known, see for instance \cite{S} and the references therein, this monotonicity theorem plays an important role in the study of the regularity of solutions of the harmonic map flow.

Struwe's result, specialized to the case when $\mathcal N = \R$, is concerned with the linear case $p=2$. In what follows, we will prove that, quite remarkably, when $p\not= 2$, viscosity solutions of the nonlinear singular equation \eqref{me} 
satisfy a monotonicity theorem similar to Struwe's result. We have in fact the following theorem.

\begin{thrm}[Generalized Struwe's monotonicity formula]\label{T:struwe}
Let $u$ be the unique viscosity solution of \eqref{me} obtained from Theorem \ref{T:existence} when the initial datum  $g$ is  Lipschitz continuous and constant outside a compact set. For every $x\in \Rn$ and $T>0$ the function   
\[
t \to E(t) =  (T-t)^{p/2} \int_{\Rn}  |Du(y,t)| ^{p} G(x,y,T-t) dy
\]
is non-increasing on the interval $0 \leq  t \leq  T$.
\end{thrm}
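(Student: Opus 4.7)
The strategy is to prove monotonicity for the smooth regularized solutions $u^\ve$ of \eqref{e:1aprox} and then pass to $\ve \to 0$. Writing $\tau = T-t$ and $M = |Du^\ve|^2+\ve^2$, define
\[
E^\ve(t) = \tau^{p/2}\int_{\Rn} M^{p/2}\, G(x, y, \tau)\, dy.
\]
The central claim is the dissipation identity
\[
\frac{dE^\ve}{dt} = -\,p\,\tau^{p/2-2}\!\int_{\Rn}\! M^{p/2-1}\bigl(\tau u^\ve_t - \tfrac{1}{2}\langle y-x, Du^\ve\rangle\bigr)^2 G\, dy \;-\; \tfrac{p\ve^2}{2}\,\tau^{p/2-1}\!\int_{\Rn}\! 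M^{p/2-1} G\, dy,
\]
each term of which is nonpositive since $p>1$ and $M>0$, giving that $E^\ve$ is non-increasing on $[0, T)$.

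To derive this identity cleanly, I would use the self-similar rescaling $v^\ve(z, s) := u^\ve\!\bigl(x + e^{-s/2} z,\; T - e^{-s}\bigr)$, which converts the approximate equation into
\[
(|D_z v^\ve|^2+\tilde\ve^2)^{p/2-1}\bigl(v^\ve_s + \tfrac{1}{2}\, z\cdot D_z v^\ve\bigr) = \text{div}_z\bigl((|D_z v^\ve|^2+\tilde\ve^2)^{p/2-1} D_z v^\ve\bigr)
\]
with $\tilde\ve^2 = \tau\ve^2$, and transforms the energy into the Gaussian-weighted form $\int_{\Rn}(|D_z v^\ve|^2+\tilde\ve^2)^{p/2}e^{-|z|^2/4}\, dz = (4\pi)^{n/2} E^\ve(t)$. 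Differentiating this in $s$, integrating by parts in $z$ against the fixed weight $e^{-|z|^2/4}$, and substituting the rescaled equation produces an exact cancellation of the cross term proportional to $\langle z, D_z v^\ve\rangle$, leaving
\[
\frac{d}{ds}\!\int_{\Rn}\!(|D_zv^\ve|^2+\tilde\ve^2)^{p/2}e^{-|z|^2/4}dz = -p\!\int_{\Rn}\!\tilde M^{p/2-1}(v^\ve_s)^2 e^{-|z|^2/4}dz - \tfrac{p\tilde\ve^2}{2}\!\int_{\Rn}\!\tilde M^{p/2-1}e^{-|z|^2/4}dz,
\]
with $\tilde M = |D_zv^\ve|^2+\tilde\ve^2$; the $\tilde\ve^2$-term arises because $(\tilde\ve^2)_s = -\tilde\ve^2$ (since $\tau_s = -\tau$). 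Transcribing back via $v^\ve_s = \tau u^\ve_t - \tfrac{1}{2}\langle y-x, Du^\ve\rangle$ and $\tilde M = \tau M$ yields the displayed identity.

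The passage $\ve\to 0$ uses the uniform bound $\|Du^\ve\|_\infty \le \|Dg\|_\infty$ recorded in Section \ref{S:existence}, which makes the integrand of $E^\ve$ uniformly dominated by a multiple of $G$, together with local uniform convergence of the $Du^\ve$ available from standard parabolic regularity (the approximate equation is uniformly parabolic with constants independent of $\ve$ once $p>1$, cf.\ \eqref{ue}). Dominated convergence then gives $E^\ve(t)\to E(t)$ for each fixed $t\in [0, T)$, and the pointwise monotonicity of each $E^\ve$ transfers to $E$. The main obstacle is recognizing the right quantity to complete the square around: a brute-force integration by parts in the original $(y, t)$ coordinates produces a collection of terms that, individually, do not have a sign, and to collapse them one has to invoke a further identity obtained by testing the divergence form of the equation against $\langle y-x, Du^\ve\rangle G$ and integrating by parts in $y$; the self-similar rescaling bypasses this by identifying $v^\ve_s$ as the natural scaling-invariant time derivative, after which the computation becomes the exact nonlinear analogue of Struwe's original argument for the heat equation.
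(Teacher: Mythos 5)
Your dissipation identity for $E^\ve$ is precisely the one the paper proves in Theorem \ref{T:monoreg}: after restoring the factor of $p$, both the squared term and the $-\ve^2$ residual term match term by term (compare Lemma \ref{L:G}, where the residual $\Phi'|Du|^2 - \tfrac p2\Phi$ evaluates to $-\ve^2(\ve^2+|Du|^2)^{p/2-1}$). The self-similar rescaling $v^\ve(z,s) = u^\ve(x+e^{-s/2}z,\, T-e^{-s})$ is a clean alternative route to the same formula: the paper instead differentiates $E_\ve$ directly in $(y,t)$, uses the backward heat equation for $\rho=G$, plugs in $D\rho/\rho = -(y-x)/(2(T-t))$, and completes the square in $u_t + \langle Du, D\rho/\rho\rangle$. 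The two computations are equivalent, and that part of your argument is sound.

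The gap is in the passage $\ve\to 0$. You invoke ``local uniform convergence of the $Du^\ve$ \dots from standard parabolic regularity'' and then use dominated convergence to send $E^\ve(t)\to E(t)$ at each fixed $t$, transferring the monotonicity. But the a.e.\ convergence of $Du^\ve(\cdot,t)$ needed there is exactly what the paper does \emph{not} have and flags in Remark \ref{R:two}: uniform ellipticity \eqref{ue} gives uniform Lipschitz bounds on $u^\ve$, but the modulus of continuity of $\sigma\mapsto a^\ve_{ij}(\sigma)$ degenerates near $\sigma=0$ as $\ve\to 0$, so the parabolic estimates on higher derivatives depend on $\ve$, and in the proof one only has $Du^\ve(\cdot,t)\rightharpoonup Du(\cdot,t)$ weakly in $W^{1,p}_{loc}$. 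Weak convergence gives lower semicontinuity on one side only, so from $E^\ve(t_2)\le E^\ve(t_1)$ the paper can conclude
\[
E(t_2)\le\liminf_{\ve\to 0} E^\ve(t_2)\le\lim_{\ve\to 0} E^\ve(t_1),
\]
and the right-hand side is identified with the sharp quantity only at $t_1=0$, where $Du^\ve(\cdot,0)=Dg$ exactly, yielding the one-sided estimate \eqref{e:ap5} rather than the full monotonicity. To reach arbitrary $0<t_1\le t_2$, the paper restarts the flow at time $t_1$ from truncated data $g_k = h_k\, u(\cdot,t_1)$ and controls the truncation error using the Gaussian decay \eqref{ubV}, which is obtained by comparing $u$ with the explicit solution $G_p$ from Proposition \ref{P:es}; this cutoff-and-decay step, adapted from the proof of Theorem \ref{T:1mon}, is the real substance of the limit argument and is entirely absent from your proposal. (You also omit the separate mollification step needed to go from smooth compactly supported $g$ to merely Lipschitz $g$.) If one could actually prove $Du^\ve\to Du$ a.e., your shorter limit argument would close the proof; as stated, that convergence is asserted, not established, and the length to which the paper goes to avoid it is good evidence that it is not free.
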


Before proving Theorem \ref{T:struwe} we need to establish the following intermediate result which asserts the monotonicity  of the weighted  energy  of the  approximations. In the statement of the next result, by a \emph{regular solution} of the Cauchy problems \eqref{e:1aprox} we intend a bounded solution having bounded partial derivatives up to order three. We note explicitly that, when the initial datum $g$ is sufficiently smooth, the regular solutions defined constructed in Section \ref{S:existence} amply satisfy such requirement. Therefore, they coincide with the regular solutions in the sense of \cite{S}.

\begin{thrm}\label{T:monoreg}
Let $ u^{\ve}$ be a  regular  solution of \eqref{e:1aprox}. Then, for any $x\in \Rn$ and $T>0$, the function
\[
t \to E_{\ve}(t) =(T-t)^{\frac{p}{2}} \int_{\Rn} (|Du^{\ve}(y,t)|^2 +\ve^2)^{\frac p2} G(x,y,T-t) dy,
\]
is non-increasing on the interval $0< t \le  T$.
\end{thrm}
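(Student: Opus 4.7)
The plan is to pass to self-similar parabolic variables, in which $E_\ve(t)$ becomes the Gaussian-weighted $L^p$ energy of a rescaled function, and the monotonicity reduces to a clean divergence identity exploiting an Ornstein--Uhlenbeck drift.

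Set $s=-\log(T-t)$ and $z=(y-x)/\sqrt{T-t}$, and define $v(z,s)=u^{\ve}(x+ze^{-s/2},T-e^{-s})$. A direct change of variables, using the well-known scaling $G(x,y,T-t)\,dy=\rho(z)\,dz$ with $\rho(z)=(4\pi)^{-n/2}e^{-|z|^{2}/4}$, recasts the weighted energy as
\[
E_{\ve}(t)=\int_{\Rn}\tilde H^{p/2}\,\rho(z)\,dz,\qquad \tilde H:=|D_{z}v|^{2}+e^{-s}\ve^{2}.
\]
Applying the chain rule to the equation $u_{t}=a^{\ve}_{jk}(Du)u_{jk}$, and noting that $u_{j}u_{k}/H=v_{j}v_{k}/\tilde H$, the rescaled function $v$ satisfies
\[
v_{s}+\tfrac{1}{2}\langle Dv,z\rangle=a^{\ve}_{jk}(Dv)v_{jk},
\]
which in divergence form reads $\tilde H^{(p-2)/2}\!\left(v_{s}+\tfrac{1}{2}\langle Dv,z\rangle\right)=\text{div}\!\left(\tilde H^{(p-2)/2}Dv\right)$.

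Next I multiply this identity by $\rho$ and exploit the defining property $D\rho=-\tfrac{z}{2}\rho$ of the Gaussian. Writing $\rho\,\text{div}(\tilde H^{(p-2)/2}Dv)=\text{div}(\rho\,\tilde H^{(p-2)/2}Dv)+\tfrac{\rho}{2}\tilde H^{(p-2)/2}\langle Dv,z\rangle$, the drift term on the left exactly cancels the contribution of $\langle D\rho,Dv\rangle$, producing the key clean identity
\[
\rho\,\tilde H^{(p-2)/2}v_{s}=\text{div}\!\left(\rho\,\tilde H^{(p-2)/2}Dv\right).
\]
This is the Gaussian-weighted analogue of the divergence-form equation $\text{div}(H^{(p-2)/2}Du)=H^{(p-2)/2}u_{t}$ of the regularized problem, and it is the heart of the argument.

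Multiplying by $v_{s}$ and integrating over $\Rn$, the boundary terms at infinity vanish thanks to the uniform bounds on the derivatives of $u^{\ve}$ from Section \ref{S:existence} and the Gaussian decay of $\rho$. Integration by parts gives
\[
\int v_{s}^{2}\,\tilde H^{(p-2)/2}\rho\,dz=-\int \rho\,\tilde H^{(p-2)/2}\langle Dv,Dv_{s}\rangle\,dz.
\]
On the other hand, $\partial_{s}\tilde H=2\langle Dv,Dv_{s}\rangle-e^{-s}\ve^{2}$, and a direct differentiation shows
\[
\partial_{s}E_{\ve}=p\int\tilde H^{(p-2)/2}\langle Dv,Dv_{s}\rangle\rho\,dz-\frac{p\,e^{-s}\ve^{2}}{2}\int\tilde H^{(p-2)/2}\rho\,dz.
\]
Combining the two displays yields the fully signed formula
\[
\partial_{s}E_{\ve}=-p\int v_{s}^{2}\,\tilde H^{(p-2)/2}\rho\,dz-\frac{p\,e^{-s}\ve^{2}}{2}\int\tilde H^{(p-2)/2}\rho\,dz\leq 0,
\]
and since $ds/dt=1/(T-t)>0$ we conclude $E_{\ve}'(t)\leq 0$. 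The only technical obstacle is the step of discovering the correct self-similar rescaling (once that is done, everything is essentially forced) together with the justification of integration by parts, which is routine: for each fixed $s$, $|Dv|$ is bounded and $|v_{s}|$ grows at most linearly in $|z|$ via $|y-x|=|z|e^{-s/2}$, while $\rho$ provides Gaussian decay.
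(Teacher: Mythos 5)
Your proof is correct, and it takes a genuinely different route from the paper's. The paper differentiates $E_\ve(t)$ directly in physical variables $(y,t)$, exploits the backward heat equation $\Delta\rho+\rho_t=0$, completes a square to extract a nonpositive term $-\int\rho\Phi'[u_t+\langle Du,D\rho/\rho\rangle]^2\,dy$, and then devotes a separate computation (Lemma \ref{L:G}) to simplifying the remainder using the explicit self-similar structure of the Gaussian, arriving at $G_\ve(t)=(T-t)^{p/2}\int\frac{\rho}{2(T-t)}(\ve^2+|Du^\ve|^2)^{p/2-1}(-\ve^2)\,dy\le0$. You instead pass to self-similar variables $(z,s)$ \emph{first}; this absorbs all the scaling factors into the definition of $\tilde H$, turns the moving Gaussian into a stationary weight $\rho(z)$, and converts the equation into a Gaussian-weighted divergence identity $\rho\,\tilde H^{(p-2)/2}v_s=\text{div}(\rho\,\tilde H^{(p-2)/2}Dv)$. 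The two nonnegative dissipation terms you extract, $p\int v_s^2\tilde H^{(p-2)/2}\rho\,dz$ and $\frac{p}{2}e^{-s}\ve^2\int\tilde H^{(p-2)/2}\rho\,dz$, correspond exactly to the paper's square term and to $-G_\ve(t)$ respectively (indeed one checks $u_t+\langle Du,D\rho/\rho\rangle=e^s v_s$). Your approach is cleaner in that Lemma \ref{L:G} becomes invisible: the choice of exponent $p/2$ and the Gaussian structure do their work automatically once the rescaling is in place, and the cancellation between the drift $\tfrac12\langle Dv,z\rangle$ and $\langle D\rho,Dv\rangle$ is the single place where that structure enters. The paper's approach stays in the original variables throughout, which is slightly closer to the way the regularity bounds on $u^\ve$ are stated and keeps the calculation parallel to Struwe's original one.

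One small slip worth fixing: your intermediate non-divergence form ``$v_s+\tfrac12\langle Dv,z\rangle=a^\ve_{jk}(Dv)v_{jk}$'' is not literally correct, since $a^\ve_{jk}(Dv)=\delta_{jk}+(p-2)\frac{v_jv_k}{\ve^2+|Dv|^2}$ whereas the correct coefficient is $\delta_{jk}+(p-2)\frac{v_jv_k}{\tilde H}$ with $\tilde H=|Dv|^2+e^{-s}\ve^2$. You clearly know this (you note $u_ju_k/H=v_jv_k/\tilde H$, and your divergence-form identity uses $\tilde H$, which is what the argument actually needs), so this is only a notational abuse, not a gap.
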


\begin{proof}
Consider $ u^{\ve}$ as above, and for $\sigma\in \Rn$, let $\Phi_{\ve}(\sigma)= \frac{2}{p} (\ve^{2} + |\sigma|^2)^{p/2}$. 
In the following considerations, all the  $\ve$ super- and subscripts will be routinely omitted. Thus, by rewriting the equation \eqref{e:1aprox}, we see that $u (= u^\ve)$ satisfies
\begin{equation}\label{1}
\text{div}(\Phi'(|Du|^2)Du)=\Phi'(|Du|^2)u_t,   \quad\quad\quad \text{in} \quad \Rn  \times [0,\infty).
\end{equation}
Also, let us set for brevity $\rho(y,t) = G(x,y,T-t)$, and notice that $\rho$ satisfies the backward heat equation in $\Rn \times(-\infty,T)$, i.e.,
\begin{equation}\label{bh}
\Delta \rho +\rho_t = 0.
\end{equation}
For the sake of convenience, we will continue to denote by $E(t)$ the energy $E_\ve(t)$ in the statement of Theorem \ref{T:monoreg}, multiplied by a factor of $\frac 1p$, i.e., 
\[
E(t) = (T-t)^{\frac{p}{2}} \int_{\Rn} \Phi(|Du(y,t)|^2) \frac{\rho(y,t)}{2} dy.
\]
Differentiating, we find
\begin{equation}\label{2}
E'(t)=(T-t)^{\frac{p}{2}}\int_{\Rn} \left[\frac{-p}{2(T-t)} \frac{\rho}{2} \Phi + \frac{\rho_t}{2} \Phi + \frac{\rho}{2}(\Phi(|Du|^2))_t \right] dy.
\end{equation}
Now
\begin{equation}
(\Phi(|Du|^2))_t = 2(\Phi'u_t)_i u_i -4\Phi'' u_{ij} u_j u_i u_t.
\end{equation}
Using  the equation \eqref{1} we find
\begin{equation}\label{e:1}
(\Phi(|Du|^2))_t = 2(\Phi'u_t)_i u_i -2u_t [\Phi' u_t - \Phi' u_t \Delta u].
\end{equation}
Replacing \eqref{e:1} into \eqref{2}, and using \eqref{bh}, gives
\begin{equation}\label{e:2}
E'(t)=(T-t)^{\frac{p}{2}}\int_{\Rn} \left[\frac{-p}{2(T-t)} \frac{\rho}{2} \Phi - \frac{\Delta
\rho}{2} \Phi + \rho u_i (\Phi' u_t)_i - \rho \Phi' u^2_t + \rho \Phi' u_t \Delta u \right] dy.
\end{equation}
We now integrate by parts the term 
\begin{equation*}
\int_{\Rn} \rho u_i (\Phi' u_t)_i dy = -\int_{\Rn} \rho_i u_i \Phi' u_t dy - \int_{\Rn} \rho \Delta u u_t \Phi' dy .
\end{equation*} 
Substitution in (\ref{e:2}) gives 
\begin{align}
 E'(t)= & (T-t)^{\frac{p}{2}}\int_{\Rn} \left[- \frac{p}{2(T-t)} \frac{\rho}{2} \Phi - \frac{\Delta
\rho}{2} \Phi - \rho \Phi' u_t \Delta u -\Phi' u_t <Du,D\rho> - \rho \Phi' u^2_t + \rho \Phi' u_t \Delta u \right] dy
\\
& = (T-t)^{\frac{p}{2}}\int_{\Rn} -\frac{\rho}{\Phi'} \left[\Phi' u_t + <Du,\frac{D\rho}{\rho}> \Phi' \right]^2 dy 
\notag\\
& + (T-t)^{\frac{p}{2}}\int_{\Rn} \left[\rho \Phi' (<Du,\frac{D\rho}{\rho}>)^2 + \Phi' u_t <Du, D\rho> - \frac{\Delta \rho}{2} \Phi - \frac{p}{2(T-t)} \frac{\rho}{2} \Phi \right ] dy.
\notag
\end{align}

We have thus proved
\begin{equation}
 E'(t)=  (T-t)^{\frac{p}{2}}\int_{\Rn} -\rho \Phi' \left[u_t + <Du,\frac{D\rho}{\rho}> \right]^2 dy  +  G(t)
\end{equation}
with
\begin{equation}\label{G}
G(t) = (T-t)^{\frac{p}{2}}\int_{\Rn} \left[\rho \Phi' (<Du,\frac{D\rho}{\rho}>)^2 + \Phi' u_t <Du, D\rho> - \frac{\Delta \rho}{2} \Phi - \frac{p}{2(T-t)} \frac{\rho}{2} \Phi \right ] dy.
\end{equation}

In order to proceed we establish the following
\begin{lemma}\label{L:G} 
The function $G$ defined by the equation \eqref{G} is given by 
\[
G(t)= (T-t)^{\frac{p}{2}}\int_{\Rn} \frac{\rho}{2(T-t)}\left[\Phi'(|Du|^2)|Du|^2 - \frac{p}{2} \Phi(|Du|^2) \right] dy.
\]
\end{lemma}

\begin{proof}
From the  equation \eqref{1} we have
\begin{align}\label{3}
& \int_{\Rn} \Phi' u_t <Du,D\rho> dy = \int_{\Rn} <Du,D\rho> \text{div}(\Phi' Du) dy
\\
& = \int_{\Rn} \left[ -\Phi' u_{ij} u_i \rho_j\ -\ \Phi' \rho_{ij} u_i u_j\right] dy
\notag\\
& = \int_{\Rn} \left[-<D(\frac{\Phi(|Du|^2)}{2}, D\rho> - \Phi' <D^2 \rho(Du),Du>\right] dy
\notag\\
& = \int_{\Rn}\ \left[ \frac{\Phi}{2} \Delta \rho\  -\ \Phi'<D^2 \rho(Du),Du>\right] dy,
\notag
\end{align}
where we have denoted by $D^2\rho$ the Hessian matrix of $\rho$.
By substituting \eqref{3} in \eqref{G}, we obtain
\begin{equation}\label{G2}
G(t) = (T-t)^{\frac{p}{2}}\int_{\Rn} \left[\rho \Phi' (<Du,\frac{D\rho}{\rho}>)^2 + \Phi \frac{\Delta \rho}{2} - \rho \Phi'<\frac{D^2 \rho}{\rho}(Du),Du> - \Phi \frac{\Delta \rho}{2} - \frac{p}{2(T-t)}{\rho}{2} \Phi \right] dy.
\end{equation}
We now notice that
\[
\rho = (4\pi(T-t))^{-\frac{n}{2}} f\left(-\frac{r^2}{4(T-t)}\right)
\]
with $f(s)=e^s$, and $r = |y-x|$. One has
\[
\rho_i = (4\pi(T-t))^{-\frac{n}{2}} f' \left(-\frac{r^2}{4(T-t)}\right)\left(-\frac{y_i -x_i}{2(T-t)}\right).
\]
Since 
\[
f'(s)=f''(s)=f(s),
\]
we obtain
\[
\rho_i = \left(-\frac{y_i -x_i}{2(T-t)}\right)\rho ,
\]
\[
\rho_{ij} = - \delta_{ij} \frac{\rho}{2(T-t)} + \frac{(y_i-x_i)(y_j-x_j)}{4(T-t)^2} \rho .
\]
In conclusion, we have 
\begin{align}\label{5}
& \frac{D\rho}{\rho} = -\frac{y-x}{2(T-t)}
\\
& \frac{D_{ij} \rho}{\rho} = - \delta_{ij} \frac{1}{2(T-t)} + \frac{(y_i-x_i)(y_j-x_j)}{4(T-t)^2}.
\notag
\end{align}

Using \eqref{5} in \eqref{G2}, we obtain
\begin{align*}
G(t) = & (T-t)^{\frac{p}{2}}\int_{\Rn} \left[\rho \Phi' \frac{<Du,y-x>^2}{4(T-t)^2 }+ \rho \Phi' \frac{|Du|^2}{2(T-t)} - \rho \Phi'\frac{<Du,y-x>^2}{4(T-t)^2} -  \frac{p}{2(T-t)}{\rho}{2} \Phi \right] dy
\notag\\
& = (T-t)^{\frac{p}{2}}\int_{\Rn} \frac{\rho}{2(T-t)}\left[\Phi'(|Du|^2)|Du|^2 - \frac{p}{2} \Phi(|Du|^2) \right] dy, 
\end{align*}
which gives the desired conclusion.

\end{proof} 

With Lemma \ref{L:G} in hands we now resume the proof of Theorem \ref{T:monoreg}. Substituting in \eqref{G} above the  explicit form of the function $\Phi(\sigma) = \frac{2}{p} (\ve^{2} + |\sigma|^2)^{p/2}$, we obtain 
\begin{equation*}
G(t) =  (T-t)^{p/2} \int_{\Rn} \frac{\rho}{2(T-t)}[( \ve^2 + |Du^{\ve}|^2)^{p/2 - 1}|Du^{\ve}|^2  -  ( \ve^2  + |Du^{\ve}|^{2} )^{p/2}]
\end{equation*} 
Therefore,    
\begin{equation*}
G_{\ve}(t) =  (T-t)^{p/2} \int_{\Rn} \frac{\rho}{2(T-t)}[ (\ve^2 + |Du^{\ve}|^2)^{p/2 - 1}(-\ve^2)] \leq  0.
\end{equation*}
This shows that $ E_{\ve}'(t) \leq 0$, thus completing the proof of the theorem.

\end{proof}



We can now turn to the

\begin{proof}[Proof of Theorem \ref{T:struwe}]
By subtracting a constant, we can assume without loss of generality that $g$ be compactly supported. In a first step, we  also assume that  $g$ be smooth. But then, Theorem \ref{T:monoreg} gives for the corresponding $u^{\ve}$
\begin{equation}\label{e:3}
E_{\ve}(t_{2} ) \leq  E_{\ve}(t_{1}) \ \ \ \quad    t_{2} \geq t_{1}.
\end{equation}
Moreover, since for any compact set $K\subset \Rn$ we trivially have
\begin{equation*}
E_{\ve}(t) \geq  (T-t)^{p/2} \int_{K} |Du^{\ve}(y,t)|^{p} G(x,y,T-t) dy,
\end{equation*}
we obtain from \eqref{e:3}
\[
(T-t_2)^{p/2} \int_{K} |Du^{\ve}(y,t_2)|^{p} G(x,y,T-t_2) dy \le (T)^{\frac{p}{2}} \int_{\Rn} (|Dg(y)|^2 +\ve^2)^{\frac p2} G(x,y,T) dy.
\]
Here, we have made use of the fact that for a sequence $t_j\searrow 0$, with $t_j < t_2$ for every $j\in \mathbb N$, we have $Du^{\ve} (\cdot, t_{j}) \to Dg$ as  $j\to \infty$. We note that $|Du^{\ve}(\cdot, t_{j})|^{p} G(x,\cdot, T - t_{j}) \leq |Du^{\ve}(\cdot, t_{j})|^{p} G(x,\cdot, T) \leq ||Dg||^{p}_{\infty} G(x,\cdot,T )$, which belongs to $L^1(\Rn)$, and thus we can use Lebesgue dominated convergence theorem.

Now, because of the uniform bound of the solutions and their gradients in terms of $g$, there exists a subsequence $\ve_j\searrow 0$, such that $u^{\ve_j}(\cdot,t_i) \rightharpoonup u(\cdot,t_i)$ in $W^{1, p}_{loc}(\Rn)$, for $i=1,2$. Therefore, letting $\ve_j \to 0$, and using lower semicontinuity in the left-hand side of the latter inequality, and Lebesgue dominated convergence theorem in the right-hand side (which we can use since we are integrating against a Gaussian measure on $\Rn$), we obtain
\[
(T-t_2)^{p/2} \int_{K} |Du(y,t_2)|^{p} G(x,y,T-t_2) dy \le (T  )^{\frac{p}{2}} \int_{\Rn} |Dg(y)|^p G(x,y,T) dy.
\]
Letting  $t_{2}= t$ and $K_j\nearrow \Rn$, we conclude that for every $t\ge 0$ the following energy decay estimate holds
\begin{equation}\label{e:ap5}
(T-t)^{p/2} \int_{\Rn} |Du(y,t)|^{p} G(x,y,T-t_2) dy \leq (T)^{p/2} \int_{\Rn} |Dg(y)|^{p} G(x,y,T) dy. 
 \end{equation}
When $g$ is Lipschitz continuous and compactly supported, let $g_{k}$ denote the $\ve_{k}$ mollification, and let $u_{k}$ be the corresponding  solutions with initial datum $g_{k}$. Then, by Theorem \ref{T:max4} and the uniform Lipschitz bounds in Section \ref{S:existence}, we have $ u_{k}(\cdot,t) \to u(\cdot,t)$ uniformly and weakly in $W^{1,p}_{loc}(\Rn)$. Therefore, since the estimate \eqref{e:ap5} holds for each $u_k, g_k$, repeating the limiting arguments which have already been first used several times, 
we conclude that the energy estimate \eqref{e:ap5} continues to be valid for Lipschitz $g$. 

At this point, using the crucial estimate \eqref{ubV}, we can complete the proof of the monotonicity of the weighted energy by arguing as in the proof  of Theorem \ref{T:1mon}. We leave the details to the reader.


\end{proof}

For the case of the motion by mean curvature equation \eqref{mmc}, the comparison with the function $V$ as in \eqref{ubV} does not work. However, we already know that the solutions obtained in \cite{ES1} are constant outside a compact set. Thus, the intermediate step of multiplying them by the cutoff function $h_{k}$ is not required as above. This  allows us to assert an energy decay monotonicity in the case $p=1$. The calculations are justified by arguments similar to those presented above in the case $p>1$, but using the bounds in \cite{ES1}, page 655. We omit the relevant details.

\begin{thrm}[Weighted monotonicity for $p=1$]
Let $u$ be the unique viscosity solution of \eqref{mmc} with an initial datum $g$ Lipschitz continuous and constant outside a compact set. For every $x\in \Rn$ and $T> 0$ the function   
\[
t \to E(t) =  (T-t)^{1/2} \int_{\Rn}  |Du(y,t)| G(x,y,T-t) dy
\]
is nonincreasing on the interval $0 \le  t \leq  T$.  
\end{thrm}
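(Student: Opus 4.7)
The plan is to adapt the proof of Theorem \ref{T:struwe}, taking advantage of two features of the case $p=1$. First, the algebraic identity that makes $G_\ve(t)\le 0$ in Theorem \ref{T:monoreg} is especially clean when $\Phi(s)=2\sqrt{\ve^2+s}$:
\[
\Phi'(s)s-\tfrac12\Phi(s) = \frac{s}{\sqrt{\ve^2+s}}-\sqrt{\ve^2+s} = -\frac{\ve^2}{\sqrt{\ve^2+s}}\le 0,
\]
which is precisely the $p=1$ endpoint of the expression $-\ve^2(\ve^2+s)^{p/2-1}$ appearing in the proof of Theorem \ref{T:monoreg}. Second, the regularized solutions $u^{\ve,\eta}$ of \cite{ES1}, obtained by adding the term $\eta\,\delta_{ij}$ to $a^\ve_{ij}$ so as to restore the uniform ellipticity \eqref{ue} in the $p=1$ case, can be arranged to be constant outside a fixed spatial compact set thanks to the auxiliary barrier $\psi$ on page~657 of \cite{ES1}. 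This property is what lets one bypass the comparison with the explicit solution $V$ from \eqref{ubV}, which was the main technical device needed for Theorem \ref{T:1mon} and Theorem \ref{T:struwe}.

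Concretely, I would first take $g$ smooth and compactly supported (subtracting the constant value of $g$ outside that compact set, which does not affect $Du$). For the corresponding $u^{\ve,\eta}$, which enjoy uniform $L^\infty$ bounds on the gradient and Hessian, I would repeat the computation in the proof of Theorem \ref{T:monoreg} to obtain, with $\rho(y,t)=G(x,y,T-t)$,
\[
E_{\ve,\eta}'(t) \le (T-t)^{1/2}\int_{\Rn}\frac{\rho(y,t)}{2(T-t)}\left[\Phi_\ve'(|Du^{\ve,\eta}|^2)|Du^{\ve,\eta}|^2-\tfrac12\Phi_\ve(|Du^{\ve,\eta}|^2)\right] dy + R_\eta(t),
\]
where $R_\eta(t)$ collects the contribution produced by the $\eta$-perturbation and will be shown to be $o_\eta(1)$. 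The integrations by parts in the derivation carry no boundary contribution because of the uniform compact support of $u^{\ve,\eta}$, so Lemma \ref{L:G} applies and the main integrand is $\le 0$ by the identity displayed above. Letting $\eta\to 0$ and then $\ve\to 0$, the uniform convergence $u^{\ve,\eta}\to u$ and the weak $L^1_{\mathrm{loc}}$-convergence of gradients established in \cite{ES1}, combined with lower semicontinuity on the left-hand side of the inequality $E_{\ve,\eta}(t_2)\le E_{\ve,\eta}(t_1)$ and Lebesgue dominated convergence against the Gaussian weight on the right-hand side, would yield monotonicity of $E(t)$ for smooth $g$. The extension to Lipschitz $g$ constant outside a compact set is then obtained by mollification and an application of Theorem \ref{T:max4}, exactly as in the proof of Theorem \ref{T:struwe}.

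The main obstacle will be the careful bookkeeping of the error $R_\eta(t)$: the extra $\eta\,\delta_{ij}$ piece in $a^{\ve,\eta}_{ij}$ slightly spoils the divergence structure on which Lemma \ref{L:G} is based, and one must verify that the additional terms produced by it are either non-positive or infinitesimal as $\eta\to 0$. This should follow from the uniform bounds recorded on page~655 of \cite{ES1}, together with the simple observation that the extra contribution to $\text{div}(\Phi'_\ve(|Du|^2)Du)-\Phi'_\ve(|Du|^2)u_t$ is controlled by $\eta\,\|D^2 u^{\ve,\eta}\|_{L^\infty}$ on the fixed compact support; but making this precise is the only genuinely delicate step.
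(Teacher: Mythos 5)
Your overall strategy is sound --- apply the Struwe computation of Theorem \ref{T:monoreg} to the regularizations of \eqref{mmc}, observe that the sign of the key algebraic expression $\Phi'(s)s-\tfrac12\Phi(s)=-\ve^2/\sqrt{\ve^2+s}$ persists at $p=1$, pass to the limit, and use the compact-support structure specific to $p=1$ in place of the comparison with $V$ from \eqref{ubV}. The algebra is right and these are indeed the ingredients the paper relies on. But there are two genuine errors in how you deploy the second ingredient.

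First, you misattribute the ``constant outside a compact set'' property. It is the \emph{limit} solution $u$ of \eqref{mmc} (constructed in \cite{ES1}) that has compactly supported $Du(\cdot,t)$, not the approximations $u^{\ve,\eta}$. For fixed $\ve,\eta>0$ the regularized equation is uniformly parabolic and therefore has infinite speed of propagation: if $u^{\ve,\eta}$ were constant on $\{|x|>R\}\times(0,T]$ while nonconstant inside, the strong maximum principle would be violated. The barrier $\psi$ on p.~657 of \cite{ES1} is used to control the approximations near spatial infinity uniformly in $\ve,\eta$ and deduce that the \emph{limit} is locally constant there. This misattribution is actually harmless where you invoke it (the integrations by parts in Theorem \ref{T:monoreg} need no compact support at all, because the Gaussian weight $\rho$ together with the uniform bounds on $Du^{\ve,\eta}$, $D^2u^{\ve,\eta}$ already kills all boundary terms), but it matters badly for the second error.

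Second, and more seriously, your limit passage claims too much. From $E_{\ve,\eta}(t_2)\le E_{\ve,\eta}(t_1)$, lower semicontinuity gives $E(t_2)\le\liminf E_{\ve,\eta}(t_2)$, but on the right-hand side you only have \emph{weak} convergence of $Du^{\ve,\eta}(\cdot,t_1)$ for $t_1>0$, so Lebesgue dominated convergence does not apply and you cannot conclude $\limsup E_{\ve,\eta}(t_1)\le E(t_1)$ --- semicontinuity goes the wrong way. The direct passage to the limit only produces the estimate with $t_1=0$, where $Du^{\ve,\eta}(\cdot,0)=Dg$ converges strongly. To reach general $t_1\le t_2$ one must, as in the proofs of Theorems \ref{T:1mon} and \ref{T:struwe}, \emph{restart} the Cauchy problem at time $t_1$ with initial datum $u(\cdot,t_1)$, apply the $t_1=0$ estimate to the restarted solution, and use uniqueness. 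This is exactly where the compact-support property of the limit $u(\cdot,t_1)$ is used: for $p=1$ (and only then), $u(\cdot,t_1)$ is constant outside a compact set, so after subtracting that constant it is a legitimate compactly supported Lipschitz initial datum and no cutoff $h_k$ or decay estimate \eqref{ubV} is needed. Your proposal never carries out this step.

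A minor further remark: you flag the error $R_\eta$ produced by the $\eta\,\delta_{ij}$ perturbation as ``the only genuinely delicate step.'' It is a manageable but avoidable issue. Once the uniform (in $\eta$) gradient bound is in hand, the $\ve$-regularized equation for $p=1$ is uniformly parabolic on the range of the gradient, so the $\eta\to0$ limit $u^\ve$ is already smooth (by Krylov--Safonov and Schauder, with constants depending on $\ve$ but not $\eta$) and satisfies the exact divergence identity $\operatorname{div}(\Phi_\ve'(|Du^\ve|^2)Du^\ve)=\Phi_\ve'(|Du^\ve|^2)u^\ve_t$. Applying Theorem \ref{T:monoreg} to $u^\ve$ directly dispenses with $R_\eta$ altogether. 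The truly delicate step you need to add is the restart argument just described.
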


Finally, we close this paper with a corollary of Theorem \ref{T:struwe} which generalizes to the  nonlinear singular equation \eqref{me} Struwe's monotonicity formula for the case $p=2$, see Lemma 3.2 in \cite{S}.
 
\begin{cor}\label{C:struwe}
Let $u$ be a viscosity solution as in Theorem \ref{T:struwe}. Then, the function
\[
 I(r)= E(T-r^2) = r^{p} \int_{\{t=T-r^2\}} |Du(y,t)|^{p} G(x,y,T-t) dy,
 \]  
is nondecreasing for any $0<r\le  \sqrt{T}$. 
\end{cor}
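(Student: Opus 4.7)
The plan is to observe that $I$ is simply a change-of-variable reparametrization of the weighted energy $E$ from Theorem \ref{T:struwe}, which reverses monotonicity because the substitution $t = T - r^{2}$ is order-reversing.

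First I would verify the algebraic consistency of the two displayed formulas. Setting $t = T - r^{2}$ with $r\in(0,\sqrt T\,]$ gives $T - t = r^{2}$, so $(T-t)^{p/2} = r^{p}$, and the level set $\{t = T - r^{2}\}$ is exactly the time slice at which the integrand of $E(T-r^{2})$ is evaluated. Hence the definition
\[
I(r) \;=\; r^{p}\int_{\{t=T-r^{2}\}}|Du(y,t)|^{p}\,G(x,y,T-t)\,dy
\]
coincides with $E(T-r^{2})$, as claimed.

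Next, observe that the map $\varphi:(0,\sqrt T\,]\to [0,T)$ defined by $\varphi(r) = T - r^{2}$ is strictly decreasing and continuous. Given any $0 < r_{1} < r_{2} \leq \sqrt T$, set $t_{i} := T - r_{i}^{2}$ for $i=1,2$, so that $0 \leq t_{2} < t_{1} < T$. By Theorem \ref{T:struwe}, the function $E$ is nonincreasing on $[0,T]$, hence $E(t_{2}) \geq E(t_{1})$. Therefore
\[
I(r_{2}) \;=\; E(t_{2}) \;\geq\; E(t_{1}) \;=\; I(r_{1}),
\]
which is precisely the asserted monotonicity.

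There is essentially no obstacle here beyond bookkeeping: the only point to double-check is that $E$ is genuinely defined and finite on the closed interval $[0,T]$, so that the endpoint $r = \sqrt T$ (corresponding to $t=0$) is legitimate. This is immediate, however, since $u(\cdot,0) = g$ is Lipschitz continuous and constant outside a compact set (hence $|Dg|\in L^{\infty}$ with compact support after subtracting a constant), so $\int_{\Rn}|Dg|^{p}G(x,y,T)\,dy < \infty$; and for $t\in(0,T)$ the uniform Lipschitz bound on $u(\cdot,t)$ from Section \ref{S:existence} combined with the Gaussian weight again gives finiteness. Thus Theorem \ref{T:struwe} applies at the two times $t_{1},t_{2}$, and the corollary follows.
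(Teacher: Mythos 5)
Your proof is correct and is precisely the intended argument: the paper states Corollary \ref{C:struwe} without a separate proof because it follows immediately from Theorem \ref{T:struwe} by the order-reversing change of variable $t=T-r^{2}$, which is exactly what you have carried out (including the routine check that the two displayed formulas agree and that $E$ is finite on $[0,T]$).
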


\begin{rmrk}
The  energy estimates  and monotonicity cannot be expected to hold for a  solution of \eqref{me} without any growth assumption since, even for the heat equation, Tychonoff's solution violates it. In our case, all solutions are  bounded, as seen in the existence theorems. 
\end{rmrk}

\medskip

\end{document}